\def\comment#1{}
\renewcommand{\complement}{^c}
 \DeclareMathOperator{\deb}{deb}
 \DeclareMathOperator{\conv}{conv}
\newcommand{\LT}{\mathfrak{L}}
 \newcommand{\xii}{\xi}
\newcommand{\OB}{Ob{\l}{\'o}j}
\newcommand{\cost}{c}
\newtheorem{theorem}{Theorem}
\newtheorem{assumption}[theorem]{Assumption}
\newtheorem{corollary}[theorem]{Corollary}
\newtheorem{definition}[theorem]{Definition}
\newtheorem{lemma}[theorem]{Lemma}
\newtheorem{proposition}[theorem]{Proposition}
\newtheorem*{importantconvention}
{Important Convention}
\theoremstyle{remark}
\newtheorem{remark}[theorem]{Remark}
\newtheorem{example}[theorem]{Example}
 \newcommand{\eps}{\varepsilon}
  \newcommand{\F}{\mathcal{F}}
 \newcommand{\M}{\mathsf{M}}
 \newcommand{\W}{\mathcal{W}}
 \renewcommand{\phi}{\varphi}
\newcommand{\E}{\mathbb{E}}
\renewcommand{\P}{\mathbb{P}}
\newcommand{\N}{\mathbb{N}}
\newcommand{\Q}{\mathbb{Q}}
\newcommand{\R}{\mathbb{R}}
\newcommand{\law}{\mathrm{Law}}
\newcommand{\lsub}[1]{{}_{\llcorner #1}}
\newcommand{\Xs}{\mathsf{X}}
\newcommand{\Ys}{\mathsf{Y}}
\newcommand{\cl}{\textsc{cl}}
\newcommand{\op}{\textsc{op}}
\newcommand{\G}{\mathcal G}
\DeclareMathOperator{\supp}{supp}
\newcommand{\bes}{\begin{subequations}}
\newcommand{\ees}{\end{subequations}}
\newcommand{\eea}{\end{eqnarray}}
\newcommand{\cF}{\mathcal{F}}
\renewcommand{\W}{{\mathbb W}}
\newcommand{\BB}{{\mathbb B}}
\newcommand{\I}{\mathbbm 1}
\newcommand{\1}{\mathbbm 1}
\renewcommand{\eps}{\varepsilon}
\renewcommand{\epsilon}{\varepsilon}
\newcommand{\Leb}{\mathcal{L}}
\newcommand{\leb}{\mathcal{L}}
\DeclareMathOperator{\id}{Id}
\DeclareMathOperator{\proj}{proj}
\newcommand{\fourIdx}[5]{%
\setbox1=\hbox{\ensuremath{^{#1}}}%
 \setbox2=\hbox{\ensuremath{_{#2}}}%
 \setbox5=\hbox{\ensuremath{#5}}%
 \hspace{\ifnum\wd1>\wd2\wd1\else\wd2\fi}%
 \ensuremath{\copy5^{\hspace{-\wd1}\hspace{-\wd5}#1\hspace{\wd5}#3}%
 _{\hspace{-\wd2}\hspace{-\wd5}#2\hspace{\wd5}#4}%
 }}
\numberwithin{equation}{section}
\numberwithin{theorem}{section}
\newcommand{\TRT}{\mathsf{JOIN}}
\newcommand{\RST}{\mathsf{RST}}
\newcommand{\TRST}{\mathsf{JOIN}}
\newcommand{\pr}{\mathsf{proj}}
\newcommand{\BP}{\mathsf{SG}}
\newcommand{\SG}{\mathsf{SG}}
\newcommand{\SGW}{{\widehat{\mathsf{SG}}}}
\newcommand{\SSG}{{\mathsf{SG}_2}}
\newcommand{\Opt}{\mathsf{Opt}}
\newcommand{\DC}{\mathsf{DC}}
\renewcommand{\ll}{\llbracket}
\renewcommand{\subset}{\subseteq}
\renewcommand{\supset}{\supseteq}
\newcommand{\ol}[1]{\bar{#1}}
\newcommand{\ul}[1]{\underaccent{\bar}{#1}}
\newcommand{\nimmdasnicht}[1]{\overline{#1}}
\renewcommand{\S}{S}
\newcommand{\CRx}{{C_x(\R_+)}}
\newcommand{\CRo}{{C_0(\R_+)}}
\newcommand{\CRR}{{C(\R_+)}}
\newcommand{\oCRo}{{{\nimmdasnicht C}_0(\R_+)}}
\renewcommand{\BB}{\ol{B}}
\newcommand{\oF}{\bar \F}
\newcommand{\oQ}{\bar \Q}
\newcommand{\oW}{\nimmdasnicht \W}
\newcommand{\oE}{\bar \E}
\newcommand{\ul}[1]{\underline{#1}}\newcommand{\ol}[1]{\overline{#1}}\newcommand{\Rt}{\overline{R}}");
\renewcommand{\llcorner}{\upharpoonright}
\begin{document}

\title{Optimal Transport and Skorokhod Embedding}

\author{Mathias Beiglb\"ock} \author{Alexander M.~G.~Cox}
\author{Martin Huesmann} \thanks{The authors thank Julio Backhoff,
  Manu Eder, Walter Schachermayer, Nizar Touzi, and the anonymous
  referees for helpful comments. The first author was supported by the
  FWF-grants P26736 and Y782, the third author by the CRC 1060.}
  \begin{abstract}

    The \emph{Skorokhod embedding problem} is to represent a given probability
    as the distribution of Brownian motion at a chosen stopping time.  Over
    the last 50 years this has become one of the important classical problems in
    probability theory and a number of authors have constructed
    solutions with particular optimality properties.  These constructions employ
    a variety of techniques ranging from excursion theory to potential 
    and PDE theory and have been used in many different branches of pure and
    applied probability.
 
    We develop a new approach to Skorokhod embedding based on ideas
    and concepts from \emph{optimal mass transport}.  In analogy to the
    celebrated article of Gangbo and McCann on the geometry of optimal
    transport, we establish a geometric characterization of Skorokhod embeddings
    with desired optimality properties.  This leads to a systematic method to
    construct optimal embeddings. It allows us, for the first time, to derive all
    known optimal Skorokhod embeddings as special cases of one unified
    construction and leads to a variety of new embeddings.  While previous
    constructions typically used particular properties of Brownian motion, our
    approach applies to all sufficiently regular Markov processes.
 
\medskip

\noindent\emph{Keywords:} Optimal Transport, Skorokhod Embedding, cyclical monotonicity. \\
\emph{Mathematics Subject Classification (2010):} Primary 60G42, 60G44; Secondary 91G20.
\end{abstract}
\maketitle
\section{Introduction}
Let $B$ be a Brownian motion started in $0$ and consider a probability $\mu$ on
the real line which is centered and has second moment.  The Skorokhod embedding
problem is to construct a stopping time $\tau$ embedding $\mu$ into Brownian
motion in the sense that
\begin{align}\label{SkoSol}\tag{SEP}\textstyle
B_\tau \mbox{ is distributed according to } \mu, \quad \E[\tau] < \infty.
\end{align}
Here, the second condition is imposed to exclude certain undesirable solutions, and can be modified to extend to measures without a second moment. As already demonstrated by Skorokhod \cite{Sk61, Sk65} in the early 1960s, it is always possible to construct solutions to the problem.  Indeed, the survey article \cite{Ob04} of \OB{} classifies 21 distinct solutions to \eqref{SkoSol}, although this list (from 2004) misses many more recent contributions.  A common inspiration for many of these papers is to construct solutions to \eqref{SkoSol} that exhibit additional desirable properties or a distinct internal structure. These have found applications in different fields and various extensions of the original problem have been considered.  We refer to \cite{Ob04} (and the 120+ references therein) for a comprehensive account of the field.
  
\medskip
 
 Our aim is to develop a new approach to \eqref{SkoSol} based on ideas from optimal transport. Many of the previous developments are thus obtained as applications of one unifying principle (Theorem \ref{GlobalLocal}) and several difficult problems are rendered tractable. Moreover, our methods can easily handle a number of more general versions of the problem: for example, integrable measures, general starting distributions, and $\R^d$-valued Feller processes.

 \subsection{A motivating example --- Root's construction} \label{sec:RootIntro}

 To illustrate our approach we introduce Root's construction, \cite{Ro69}, which will serve as inspiration in the rest of the paper. Root's construction is one of the earliest solutions to \eqref{SkoSol}, and it is prototypical for many further solutions to \eqref{SkoSol} in that it has a simple \emph{geometric description} and possesses a certain \emph{optimality property} in the class of all solutions.

\begin{figure}[th]
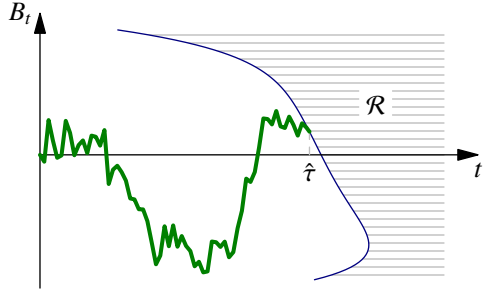

\begin{asy}[width=0.5\textwidth]
  import graph;
  import stats;
  import patterns;

  // import settings;

  // gsOptions="-P"; 

  // Construct a Brownian motion of time length T, with N time-steps
  int N = 100;
  //int N = 300;
  real T = 1.6*1.6;
  real dt = T/N;
  real B0 = 0;

  real sig = 0.7;

  real xmax = 0.8;
  real xmin = -0.8;

  real tmax = (xmax-xmin)*1.6;

  real[] B; // Brownian motion
  real[] t; // Time

  path BM;

  // Seed the random number generator. Delete for a "random" path:
  srand(103);

  B[0] = B0;
  t[0] = 0;

  BM = (t[0],B[0]);

  // Define a barrier

  real R(real y) {return 1.55 + ((y-1)**4)/8-(y**2)/1.5 -(y**6)*4;}

  int H = N+1;
  int H2 = N+1;
  int BMstop;
  int BMstop2 = N+2;

  for (int i=1; i<N+1; ++i)
  {
    B[i] = B[i-1] + sig*Gaussrand()*sqrt(dt);
    t[i] = i*dt;

    if ((H==N+1)&&(t[i]>=R(B[i])))
    {
	H = i;
	BMstop = length(BM);
       BM = BM--(R(B[i]),B[i]);
    }
    else
    {
      BM = BM--(t[i],B[i]);
    }

  }

  if (H==N+1)
  BMstop = length(BM);

  pen p = deepgreen + 1.5;
  pen p2 = lightgray + 0.25;
  //pen p2 = mediumgray + 1;

  //if (H<N+1)
  //draw(subpath(BM,BMstop,BMstop2),p2);

  pair tau = point(BM,BMstop+1);

  pen q = black + 0.5;

  real eps1 = 0.05;
  real eps2 = 0.15;

  path barrier = (graph(R,identity,xmin+eps1,xmax-eps1)--(tmax-eps2,xmax-eps1)--(tmax-eps2,xmin+eps1)--cycle);

  add("hatch",hatch(1mm,W,mediumgray));

  fill(barrier,pattern("hatch"));

  draw(graph(R,identity,xmin+eps1,xmax-eps1),NW,deepblue+0.5);

  draw((0,xmin)--(0,xmax+eps1),q,Arrow);
  draw((0,0)--((T+eps1),0),q,Arrow);

  draw((tau.x,0)--tau,mediumgray+dashed);
  label("$\hat\tau$",(tau.x,0),S);

  draw(subpath(BM,0,BMstop+1),p);

  label("$t$",(T+eps1,0),S);
  label("$B_t$",(0,(xmax+eps1)),(-1,0));

  label("$\mathcal{R}$",(2,0.3),UnFill(0.5mm));

  //label("$D_{\Rt}$",(2.2,1.35),UnFill(0.5mm));
\end{asy}

 \caption{Root's solution of \eqref{SkoSol}.}
  \label{fig:RootSolution}
\end{figure}

Root established that there exists a \emph{barrier} $\mathcal R$ (which is
essentially unique) such that the Skorokhod embedding problem is solved by the
stopping time \begin{align}\label{RootType}\tau_{\text{Root}}=\inf\{ t \geq 0 :
  (t,B_t)\in\mathcal R\}.
\end{align} 
A barrier is a Borel set $\mathcal R \subseteq \R_+\times \R$ such that
$(s,x)\in \mathcal R$ and $s < t$ implies $(t,x)\in \mathcal R$.  The Root
construction is distinguished by the following {optimality property}: among all
solutions to \eqref{SkoSol} for a fixed terminal distribution $\mu$, it
minimizes $\E[\tau^2]$. For us, the optimality property will be the
starting point from which we deduce a geometric characterization of
$\tau_{\text{Root}}$.  To this end, we now formalize the corresponding
optimization problem.

\subsection{Optimal Skorokhod Embedding Problem}

We consider the set of \emph{stopped paths}
\begin{align}\label{StoppedPaths}
S =\{(f,s): f:[0,s] \to \R \mbox{ is continuous, $f(0)=0$}\}.
\end{align} 
Throughout the paper we  consider  a function
$$\gamma:S\to \R.$$ 
We fix a stochastic basis $\Omega=(\Omega,\G,(\G_t)_{t\geq0},\P)$ which is sufficiently rich to support a Brownian motion $B$ and a uniformly distributed $\G_0$-random variable, independent of $B$.  The \emph{optimal Skorokhod embedding problem} is to construct a stopping time $\tau $ on $\Omega$ which optimizes
\begin{align}\label{IntPri}\tag{OptSEP}
  P_\gamma=\inf\Big\{ \E\big[\gamma\big((B_t)_{t\leq
    \tau},\tau\big)\big]: \mbox{ $\tau$ solves \eqref{SkoSol} }\Big\}.
\end{align}
We emphasize that \eqref{IntPri} does not depend on the particular choice of the underlying basis as long as it is rich enough in the above sense, cf.\ Lemma \ref{lem:equivOpt} / Section \ref{sec:optim-probl-dual-start}. We will usually assume that \eqref{IntPri} is \emph{well posed} in the sense that $\E\big[\gamma\big((B_t)_{t\leq \tau},\tau\big)\big]$ exists with values in $(-\infty,\infty]$ for all $\tau$ which solve \eqref{SkoSol} and is finite for one such $\tau$.

The Root stopping time solves \eqref{IntPri} in the case where $\gamma(f,s)= s^2$. Other examples where the solution is known include functions depending on the running maximum $\gamma((f,s)):= \bar f(s):= \max_{t\leq s} f(t)$ or functions of the local time at $0$.

The solutions to \eqref{SkoSol} have their origins in many different branches of probability theory, and in many cases, the original derivation of the embedding occurred separately from the proof of the corresponding optimality properties. Moreover, the optimality of a given construction is often not immediate; for example, the optimality property of the Root embedding was first conjectured by Kiefer \cite{Ki72} and subsequently established by Rost \cite{Ro76}.

In contrast to existing work, we will start with the optimization problem \eqref{IntPri} and we seek a systematic method to determine the minimizer for a given function $\gamma$.  To develop a general theory for this optimization problem we interpret stopping times in terms of a transport plan from the Wiener space $(\CRo,\W)$ to the target measure $\mu$, i.e.\ we want to think of a stopping time $\tau$ as transporting the mass of a trajectory $(B_t(\omega))_{t \in \R_+}$ to the point $B_{\tau(\omega)}(\omega)\in\R.$ Note that this is \emph{not a coupling} between $\W$ and $\mu$ in the usual sense and one cannot directly apply optimal transport theory.  Nevertheless the transport perspective provides a powerful intuition that guides us to develop an analogous theory, which in particular accounts for the adaptedness properties of stopping times.  To this end, it is necessary to combine ideas and results from optimal transport with concepts and techniques from stochastic analysis.

As in optimal transport, it is crucial to consider \eqref{IntPri} in a suitably relaxed form, i.e.\ in \eqref{IntPri} we will optimize over \emph{randomized stopping times} (see Definition~\ref{def:RST} below). These can be viewed as usual stopping times on a possibly enlarged probability space but in our context it is more natural to interpret them as stopping times of `Kantorovich-type' (in the sense of optimal transport), i.e.\ stopping times which terminate a given path not at a single deterministic time instance but according to a distribution.

This relaxation will allow us to transfer many of the convenient properties of classical transport theory to our probabilistic setup.  Exactly as in classical transport theory, \eqref{IntPri} can be viewed as a linear optimization problem. The set of couplings in mass transport is compact and similarly the set of all randomized stopping times solving \eqref{SkoSol} on Wiener space is compact in a natural sense.  Under the standing assumption that $B$ is defined on a sufficiently rich stochastic basis, these considerations allow us to prove:
\begin{theorem}\label{MinimizerExists}
  Let $\gamma:S\to \R$ be lsc and bounded from below.  Then \eqref{IntPri} admits a minimizing stopping time $\tau$. 
\end{theorem}
Here we can talk about the continuity properties of $\gamma$ since $S$ possesses a natural Polish topology (cf.\ \eqref{STop}).

In the language of linear optimization, Theorem \ref{MinimizerExists} is a primal problem. It is therefore natural to expect that there exists a corresponding dual problem, and our second main result concerns this duality:
\begin{theorem}\label{DualityIntro}
  Let $\gamma: S \to \R$ be lsc and bounded from
  below, and set   $$D_\gamma =\sup\left\{\int \psi(y)\, d\mu(y): \psi \in C(\R), \exists M, 
    \begin{array}{l}
      M\mbox{ is a continuous $\G$-martingale}, M_0 = 0 \\
      \P-\mbox{a.s.}, \forall t \ge 0,  M_t + \psi(B_t) \leq \gamma((B_s)_{s \le
      t},t) 
    \end{array}
  \right\}
  $$
  where $M, \psi$ 
  satisfy $|M_t| \leq a + bt+c B_t^2 $, $|\psi(y)| \leq a+ b y^2$
  for some $a,b,c>0$.   Then we have the duality relation 
  \begin{align}
    \label{HedgingDualEq2Intro} 
    P_\gamma = D_\gamma.
  \end{align}
\end{theorem}

We will prove this result in Section~\ref{sec:optim-probl-dual}, and variants of this result will prove to be important in establishing later results. Theorem~\ref{DualityIntro} has close analogues in the literature. In particular, using Hobson's time change argument (\cite{Ho98a,Ho11}), Theorem \ref{DualityIntro} is comparable to the work of Dolinsky and Soner \cite{DoSo13,DoSo14}. Similar duality results in a discrete time framework are established by Bouchard and Nutz \cite{BoNu13} among others.

\subsection{Geometric Characterization of Optimizers --- Monotonicity Principle}

A fundamental idea in optimal transport is that the optimality of a transport plan is reflected by the geometry of its support set. Often this is key to understanding the transport problem. On the level of support sets, the relevant notion is \emph{$c$-cyclical monotonicity}.  The relevance of this concept for the theory of optimal transport has been fully recognized by Gangbo and McCann \cite{GaMc96}, based on earlier work of Knott and Smith \cite{KnSm84} and R\"uschendorf \cite{Ru91,Ru95} among others.

Inspired by these results, we establish a \emph{monotonicity principle} which links the optimality of a stopping time $\tau$ with `geometric' properties of $\tau$. Combined with Theorem~\ref{MinimizerExists}, this principle will turn out to be surprisingly powerful. For the first time, \emph{all} the known solutions to \eqref{SkoSol} with optimality properties can be established through one unifying principle. Moreover, the monotonicity principle allows us to treat the optimization problem \eqref{IntPri} in a systematic manner, generating further embeddings as a by-product.

Our third main result states: 
\begin{theorem}[Monotonicity Principle]\label{GlobalLocal} 
  Let $\gamma:S\to\R$ be Borel measurable.  Suppose that \eqref{IntPri} is well posed and $\tau$ is an optimizer.  Then there exists a \emph{$\gamma$-monotone} (cf.\ Definition \ref{def:Gamma1} below) Borel set $\Gamma\subset S$ such that $\P$-a.s.
  \begin{align}\label{GammaSupport} ((B_t)_{t\leq\tau},\tau)\in\Gamma\;.\end{align}
\end{theorem}
If \eqref{GammaSupport} holds, we will loosely say that $\Gamma$ \emph{supports} $\tau$. The significance of Theorem~\ref{GlobalLocal} is that it links the optimality of the stopping time $\tau$ with a particular property of the set $\Gamma$, i.e. $\gamma$-monotonicity. In applications, the latter turns out to be much more tangible. We emphasize that we do not require continuity assumptions on $\gamma$ in this result. This will be important when we apply our results.

To link the optimality of a stopping time with properties of the set $\Gamma$ we consider the minimization problem \eqref{IntPri} on a pathwise level.  Consider two paths $(f,s), (g,t)\in S$ which end at the same value, i.e.\ $f(s)=g(t)$. We want to determine which of the two paths should be \emph{stopped} and which one should be allowed to \emph{go on} further, bearing in mind that we try to minimize $\E[\gamma((B_s)_{s\leq \tau}, \tau)]$. To make this definition formal, we need to perform an operation at the level of individual paths.  We will write $f\oplus h$ for the concatenation of the two paths $(f,s), (h,u) \in S$, specifically:
\begin{equation*}
  (f\oplus h)(r) := 
  \begin{cases}
    f(r) & r \le s\\
    f(s) + h(r-s) & r \in (s,s+u]
  \end{cases}.  
\end{equation*}
Then we set
\begin{align}\label{eq:-f}
&\gamma^{(f,s)\oplus }(h,u) := \gamma(f\oplus h,s+u).
\end{align}
We will call $\big((f,s), (g,t)\big)$ a \emph{stop-go} pair if it is
advantageous to \emph{stop} $(f,s)$ and to \emph{go on} after $(g,t)$
in the following sense:
\begin{definition}\label{def:SG}   The pair $\big((f,s), (g,t)\big)\in S\times S$   is a \emph{stop-go
    pair}, written $\big((f,s), (g,t)\big)\in\BP$, iff $f(s)=g(t)$ and
  \begin{align}
    \label{BPIneqProb-intro}
    \E\left[\gamma^{(f,s)\oplus }\left(\left(  B_u\right)_{u\leq
    \sigma}, \sigma\right)\right] +
    \gamma(g,t)\quad>\quad \gamma(f,s)  + 
    \E\left[\gamma^{(g,t)\oplus }\left(\left(  B_u\right)_{u\leq
    \sigma}, \sigma\right)\right]
  \end{align}
  for \emph{every} $(\F^B_t)_{t \ge 0}$-stopping time $\sigma$ which satisfies
  $0 < \E[\sigma] < \infty$ and for which both sides of
    \eqref{BPIneqProb-intro} are well defined and the left hand side
    is finite.
\end{definition}
Here $(\F_t^B)_{t \ge 0}$ denotes the natural filtration generated by the Brownian motion $B$. A consequence of considering only $(\F_t^B)_{t \ge 0}$-stopping times is that the set $\SG$ does not depend on the particular choice of the underlying stochastic basis.

\begin{figure}[th]
\resizebox{.8\textwidth}{!}{\input{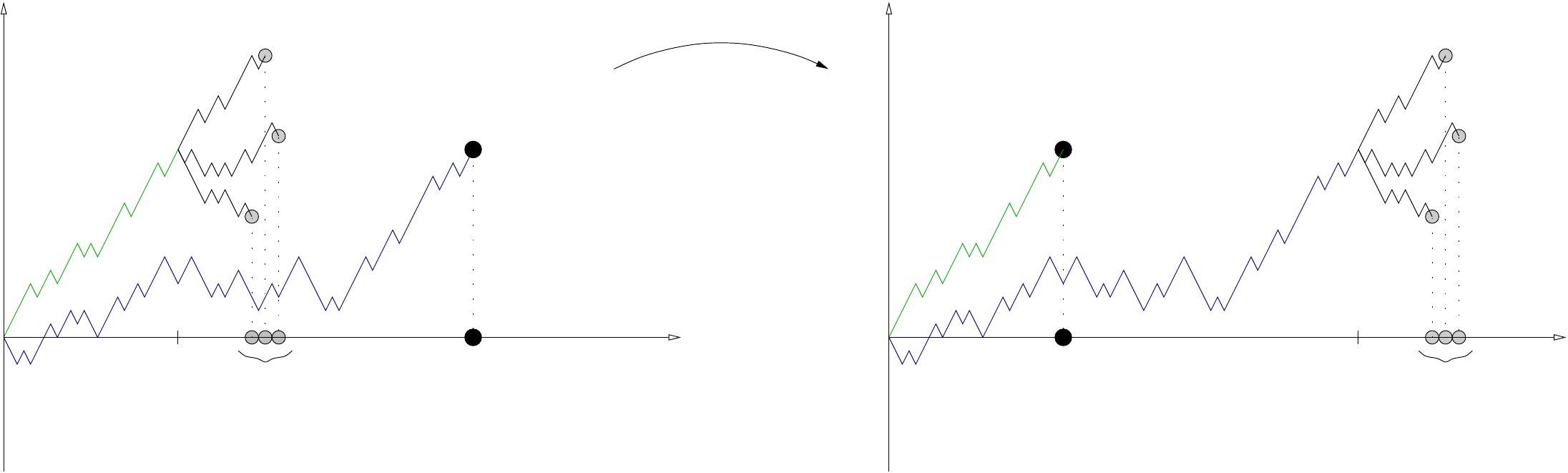_t}}
 \caption{ The left hand side of \eqref{BPIneqProb-intro} corresponds to averaging the function $\gamma$ over the stopped paths on the left picture; the right hand side to averaging the function $\gamma$ over the stopped paths on the right picture. 
}
  \label{fig:BadPairs3-intro}
\end{figure}
We note that a swapping of paths (as illustrated in Figure~\ref{fig:BadPairs3-intro}) was used by Hobson \cite[p 34]{Ho11} to provide a heuristic derivation of the optimality properties of the Root embedding. Indeed Hobson's approach was the starting point of the present paper.

Recalling \eqref{GammaSupport}, we see that the set $\Gamma \subset S$ contains the \emph{stopped} paths: that is, a path $(g,t)$ is in $\Gamma$ if there is some possibility that the optimal stopping rule decides to stop at time $t$ having observed the path $(g(u))_{u \in [0,t]}$. In addition, we need to consider those paths which we observe as the initial section of a longer, stopped, path:
these are the \emph{going} paths
\begin{align}\label{Gammakl}
  \Gamma^<:=\big\{(f,s): \exists (\tilde f,\tilde s)\in \Gamma, s< \tilde s \mbox{ and $f\equiv \tilde f$ on $[0,s]$} \big\}\;.
 \end{align}
We can now formally introduce $\gamma$-monotonicity.
\begin{definition}\label{def:Gamma1}
 A set $\Gamma\subset S$ is called $\gamma$-monotone iff $\Gamma^< \times \Gamma$ contains no stop-go pairs, i.e.\
\begin{align}\label{eq:Gamma1} \SG\ \cap \  \big(\Gamma^< \times \Gamma \big) =\emptyset.\end{align}
\end{definition} 
By the monotonicity principle, Theorem~\ref{GlobalLocal}, an optimal stopping time is supported by a set $\Gamma$ such that $\Gamma^<\times \Gamma$ contains no stop-go pair $\big((f,s),(g,t)\big)$. Intuitively, such a pair gives rise to a possible modification, improving the given stopping rule: as $f(s) = g(t)$, we can imagine stopping the path $(f,s)$ at time $s$, and allowing $(g,t)$ to go on by transferring all paths which extend $(f,s)$, the `remaining lifetime', onto $(g,t)$, which is now \emph{going} (see Figure~\ref{fig:BadPairs3-intro}). By \eqref{BPIneqProb-intro} this guarantees an improved value of $P_\gamma$, contradicting the optimality of our stopping rule. Observe that the condition $f(s)=g(t)$ is what  guarantees that a modified stopping rule still embeds the measure $\mu$.  In Section~\ref{sec:part-embedd} below we will briefly indicate how the monotonicity principle can be used to derive existing solutions to the Skorokhod embedding problem as well as a whole family of novel solutions to the Skorokhod embedding problem; many further examples will be provided in Section~\ref{sec:embeddings-abundance}.

Importantly, the transport-based approach readily admits a number of strong generalizations and extensions. With only minor changes the existence result, Theorem~\ref{MinimizerExists}, the duality result, Theorem~\ref{DualityIntro}, and the monotonicity principle, Theorem~\ref{GlobalLocal} below, extend to general starting distributions and Brownian motion in $\R^d$, and more generally to sufficiently regular Markov processes; see Sections \ref{s:mp} and \ref{sec:feller}. This is notable since previous constructions usually exploit rather specific properties of Brownian motion.

The monotonicity principle, Theorem~\ref{GlobalLocal}, represents the culmination of the three main results, and the proof of this result will be the most complex part of this paper, requiring substantial preparation in order to combine the relevant concepts from stochastic analysis and optimal transport. The preparation and proof of this result will therefore comprise the majority of the paper. In fact the proof will automatically imply a stronger version (Theorem \ref{GlobalLocal2}) of Theorem~\ref{GlobalLocal}. For our applications, it will also be helpful to introduce a version of this result which incorporates a secondary optimization, Theorem \ref{thm:second-maxim-result}.

The `classical' optimal transport version of Theorem~\ref{GlobalLocal} can be established through fairly direct arguments, at least in a reasonably regular setting, cf.\ \cite[Thms.\ 3.2, 3.3]{AmPr03} and \cite[p. 88f]{Vi03}.  However, these approaches do not extend easily to our setup: stopping times are of course not couplings in the usual sense and there is no reason for particular combinatorial manipulations to carry over in a direct fashion. Another substantial difference is that the procedure of transferring paths described below Definition \ref{def:Gamma1} necessarily refers to a \emph{continuum} of paths while the classical notion of cyclical monotonicity is concerned with rearrangements along finite cycles.  The argument given subsequently is more in the spirit of \cite{BGMS08, BiCa10} and requires a fusion of ideas from optimal transport and stochastic analysis.

\subsection{New Horizons}

The results presented in this paper are limited to  the case of the classical Skorokhod embedding problem for Markov processes with continuous paths. However we believe that our methods are sufficiently general that a number of interesting and important extensions, which previously would have been intractable, may now be within reach:
\begin{enumerate}
\item {\bf Markov processes:} The results presented in this paper should extend
  to a more general class of Markov processes with c\`adl\`ag paths. The main
  technical issues this would present lie in the generalization of the results
  in Section~\ref{sec:prel-stopp-times}, where the specific structure of the
  space of continuous paths is exploited.
\item {\bf Multiple path-swapping:} In our monotonicity principle, Theorem~\ref{GlobalLocal}, we consider the impact of swapping mass from a single unstopped path onto a single stopped path, and argue that if this improves the objective $\gamma$ on average, then the stopping time in question was not optimal. In classical optimal transport, it is known that single swapping is not sufficient to guarantee optimality; rather, one needs to consider the impact of allowing a finite `cycle' of swaps to occur, and moreover, that this is both a necessary and sufficient condition for optimality. It is natural to conjecture that a similar result applies in the present setup.

\item {\bf Multiple marginals:} A natural generalization of the Skorokhod
  embedding problem is to consider the case where a sequence of measures,
  $\mu_1, \mu_2, \dots, \mu_n$ are given, and the aim is to find a sequence of
  stopping times $\tau_1 \le \tau_2 \le \dots \le \tau_n$ such that $B_{\tau_k}
  \sim \mu_k$, and such that the chosen sequence of stopping times minimizes
  $\E[\gamma((B_t)_{t \le \tau_n},\tau_1,\dots,\tau_n)]$ for a suitable
  function $\gamma$. In this setup, it is natural to ask whether there exists
  a suitable monotonicity principle, corresponding to
  Theorem~\ref{GlobalLocal}.

\item {\bf Constrained embedding problems:} In this paper, we consider classical embedding problems, where the optimization is carried out over the class of solutions to \eqref{SkoSol}. However, in many natural applications, one needs to further consider the class of {\it constrained} embedding problems: for example, where one minimizes some function over the class of embeddings which also satisfy a restriction on the probability of stopping after a given time. It would be natural to derive generalizations of our duality results, and a corresponding monotonicity principle for such problems.
\end{enumerate}

\subsection{Background}

Since the first solution to \eqref{SkoSol} by Skorokhod \cite{Sk65} the embedding problem has received frequent attention in the literature, with new solutions appearing regularly, and exploiting a number of different mathematical tools. Many of these solutions also prove to be, by design or accident, solutions of \eqref{IntPri} for a particular choice of $\gamma$, e.g.\ \cite{Ro69, Ro76,AzYo79,Ja88,Va83,Pe86}. The survey \cite{Ob04} is a comprehensive account of all the solutions to \eqref{SkoSol} up to 2004 and references many articles which use or develop solutions to the Skorokhod embedding problem.  
More recently, novel twists on the classical Skorokhod embedding problem have been investigated by: Last et.~al.~\cite{LaMo14}, who consider the closely related problem of finding unbiased shifts of Brownian motion (and where there are also natural connections to optimal transport); Hirsch et.~al.~\cite{HiPr11}, who have used solutions to the Skorokhod embedding problem to construct Peacocks; and Gassiat et.~al.~\cite{GaMiOb14}, who have exploited particular properties of Root's solution to construct efficient numerical schemes for SDEs.

The Skorokhod embedding problem has also recently received substantial attention from the mathematical finance community. This goes back to an idea of Hobson \cite{Ho98a}: through the Dambis-Dubins-Schwarz Theorem, the optimization problems \eqref{IntPri} are related to the pricing of financial derivatives, and in particular to the problem of \emph{model-risk}. We refer the reader to the survey article \cite{Ho11} for further details.

Recently there has been much interest in optimal transport problems where the transport plan must satisfy additional martingale constraints. Such problems arise naturally in the financial context, but are also of independent mathematical interest, for example --- mirroring classical optimal transport --- they have important consequences for the study of martingale inequalities (see e.g.\ \cite{BoNu13,HeObSpTo12,ObSp14}).  The first papers to study such problems include \cite{HoNe12, BeHePe12, GaHeTo12, DoSo12}, and this field is commonly referred to as \emph{martingale optimal transport}. The Skorokhod embedding problem has been considered in this context by Galichon et.~al.~ in \cite{GaHeTo12}; through a stochastic control problem they recover the Az\'ema-Yor solution of the Skorokhod embedding problem. Notably, their approach is very different from the one pursued in the present paper.

\subsection{Outline of the Article} In Section~\ref{sec:part-embedd} we establish the Root and the Rost embeddings as a consequence of Theorems~\ref{MinimizerExists} and \ref{GlobalLocal}, as well as constructing a family of new embeddings.  The results presented in this section are intended as a motivation for the rest of the paper.  In the derivation of these embeddings we highlight the interplay between arguments of a probabilistic nature, and arguments relating to the pathwise space $S$ introduced in \eqref{StoppedPaths}.  A major benefit of working in these two separate domains is that it is typically relatively easy to prove pointwise statements in the setup of the space $S$; on the other hand, the associated probabilistic arguments are usually straightforward.  However neither set of arguments naturally transfers to the other setup.

The link between these distinct domains is provided by Theorems \ref{MinimizerExists} and \ref{DualityIntro}, and in particular the monotonicity principle Theorem \ref{GlobalLocal} which we establish in Sections 3 to 5. In Section~\ref{sec:prel-stopp-times}, we introduce a framework that allows us to view classical probabilistic concepts on the pathwise space $S$ and establish a number of auxiliary results that will be needed later on.  In Section~\ref{sec:optim-probl-dual} we prove our first two main results. As in the transport case, Theorem~\ref{MinimizerExists} will be a simple consequence of lower semi-continuity plus compactness of the set of solutions to the Skorokhod problem.  To establish Theorem~\ref{DualityIntro}, we use classical duality results from optimal transport.  In Section~\ref{s:mp} we prove Theorem~\ref{GlobalLocal} based on a combination of arguments from optimal transport with Choquet's capacitability theorem and ingredients from stochastic analysis.

In Section~\ref{sec:embeddings-abundance} we use our results to establish all  known solutions to \eqref{IntPri} as well as further embeddings. We also give an example in which \eqref{IntPri} admits only optimizers depending on additional randomization.
For readers who are mainly interested in these applications, it should be possible to read this section immediately after Section~\ref{sec:part-embedd}. 

 In Section~\ref{sec:feller} we describe a number of extensions of our previous results. In particular we consider general starting distributions and show that our main results extend to  continuous Feller processes under certain assumptions which we are able to verify for a large class of processes. As a special case of the results in this section, we also show that, as usual, the  moment condition on $\mu$ can be dropped when the second condition in \eqref{SkoSol} is recast in terms of uniform integrability resp.\ minimality (cf.\ \eqref{eq:minimalDefn}).

\subsection{Frequently used notation}
\begin{itemize}
\item The set of (sub-)probability measures on a space $\Xs$ is denoted by $\mathcal P(\Xs)$ / $\mathcal P^{\leq 1}(\Xs)$.
\item For a measure $\xi$ on $\Xs$ we write $f(\xi)$ for the push-forward of $\xi$ under $f:\Xs\to \Ys$.
\item We use $\xi(f)$ as well as $\int f~ d\xi$ to denote the integral of a function $f$ against a measure $\xi$.
\item Stochastic processes are usually denoted by capital letters like $X,Y,Z$. 
\item $\CRx$ denotes the continuous functions starting in $x$; $\CRR=\bigcup_{x\in\R}\CRx$.  
\item The set of stopped paths is $
S =\{(f,s): f:[0,s] \to \R \mbox{ is continuous, $f(0)=0$}\}
$ and we define $r:\CRo\times\R_+\to S$ by $r(\omega, t):= (\omega_{\upharpoonright [0,t]},t)$.
\item For $\Gamma \subseteq S$ we set $\Gamma^<:=\{(f,s): \exists (\tilde f,\tilde s)\in \Gamma, s< \tilde s \mbox{ and $f\equiv \tilde f$ on $[0,s]$}\}.$
\item For   $(f,s) \in S$ we write $\ol{f} = \sup_{r \le s} f(r)$, $\ul{f} = \inf_{r \le s} f(r)$ and $f^* = \sup_{r \le s} |f(r)|$.
\item We use $\oplus $ for the concatenation of paths: depending on the context the arguments may be elements of $S$, $\CRo$ or $\CRo\times\R_+$. 
\item 
  If $F$ is  a function on $S$ resp.\ $\CRo\times \R_+$ and $(f,s)\in S$ we set $F^{(f,s)\oplus} (y):= F((f,s)\oplus y)$, where $y$ may be an element of $ S$, $\CRo$, or $\CRo\times \R_+$.
\item $\W$ denotes Wiener measure; $\F^0$  ($\F^a$) the natural (augmented) filtration on $\CRo$.
\item Two commonly used probability spaces are $(\Omega, \G, (\G_t)_{t \ge 0}, \P)$, which is an arbitrary probability space, on which there exists a process $B$ which is Brownian motion, and sometimes also a $\G_0$-random variable $Y$ which is uniformly distributed on $[0,1]$. On this space, the natural filtration generated by the process $B$ is denoted by $(\F^B_t)_{t \ge 0}$ In addition, we sometimes refer to the space $(\oCRo, \oF, (\oF_t)_{t \ge 0}, \oW)$, which is the product space $\oCRo = \CRo \times [0,1]$ equipped with a suitable filtration (see the discussion above Theorem~\ref{thm:equiv RST} for further details) and the product measure $\oW=\W\otimes\mathcal L$ of Wiener and Lebesgue measure.
\end{itemize}

\section{Particular embeddings}
\label{sec:part-embedd} 
In this section we explain how Theorem \ref{GlobalLocal} can be used to derive particular solutions to the Skorokhod embedding problem, \eqref{SkoSol}, using the optimization problem \eqref{IntPri}. For much of the paper, we consider \eqref{SkoSol} for measures $\mu$ where $\int x^2\, \mu(dx) < \infty$. This constraint can be weakened to require only the first moment to be finite, subject to the restriction that the stopping time is \emph{minimal}: that is, if $\tau$ is a stopping time such that $B_{\tau} \sim \mu$, then for any stopping time $\tau'$,
\begin{equation}
  \label{eq:minimalDefn}
  B_{\tau'} \sim \mu \text{ and } \tau' \le \tau \text{ implies } \tau' = \tau \text{ a.s.}
\end{equation}
In the case where $\mu$ has a second moment, minimality and $\E[\tau]< \infty$ are equivalent.  We emphasize that, \emph{mutatis mutandis}, all of our results are valid in this more general setup, see Section~\ref{sec:feller}. Recall that we are working on a stochastic basis which is rich enough to support a Brownian motion and a uniformly distributed random variable.

\subsection{The Root embedding} \label{sec:root-embedding}

We recall the definition of the Root embedding, $\tau_{\text{Root}}$, from \eqref{RootType}, and we wish to recover Root's result (\cite{Ro69}) from an optimization problem. Remember that, according to Root's terminology, a (closed) set $\mathcal R \subseteq \R_+ \times \R$ is a \emph{barrier} if $(s,x) \in \mathcal R$ implies $(t,x) \in \mathcal R$ whenever $t>s$. Then Root's construction of a solution to the Skorokhod embedding problem can be summarized as follows:
\begin{theorem}\label{RootThm} Let $\gamma(f,t)= h(t)$, where $h:\R_+\to \R$ is a strictly convex function such that \eqref{IntPri} is well posed. Then a minimizer  of \eqref{IntPri} exists, and moreover for any minimizer $\hat \tau$, there exists a barrier $\mathcal R$ such that $\hat \tau=\inf\{ t \geq 0 : (t,B_t)\in\mathcal R\}$. In particular the Skorokhod embedding problem has a solution of barrier type as in \eqref{RootType}.
\end{theorem}
\begin{proof}
  \noindent {\bf Step 1.} We first pick --- by Theorem~\ref{MinimizerExists} --- a stopping time $\hat\tau$ which attains $P_\gamma.$   
   By Theorem~\ref{GlobalLocal} there exists a set $\Gamma \subseteq S$ such that $\left( \left(B_{s}\right)_{s \le\hat\tau}, \hat\tau\right) \in \Gamma$ almost surely, and such that $(\Gamma^{<} \times \Gamma) \cap \BP = \emptyset$.

  \noindent {\bf Step 2.} Next, consider paths $(f,s),(g,t)\in S$ such that $f(s)=g(t)$. We consider when $\big((f,s),(g,t)\big)\in \BP,$ i.e.\ under which conditions $(f,s)$ should be stopped and Brownian motion should continue to go after $(g,t)$. In the present case \eqref{BPIneqProb-intro} amounts to
\begin{align}\label{BPRoot}
  \E\big[h(s+\sigma)\big]\ + h(t) \quad>\quad
  h(s) \ +\ \E\big[h(t+\sigma)\big]. 
\end{align} 
Thus, by strict convexity of $h$, $\big((f,s),(g,t)\big) \in \SG$ iff $t <s$. 
We define two barriers by
\begin{align*}
\mathcal R_\textsc{cl}:=\{(s,x):\exists (g,t)\in\Gamma, g(t)=x, t\leq s\},\\
\mathcal R_\textsc{op}:=\{(s,x):\exists (g,t)\in\Gamma, g(t)=x, t< s\}.
\end{align*}
Fix $(g,t) \in \Gamma$. Then we have $(t,g(t)) \in \mathcal{R}_{\cl}$. 
Suppose for contradiction that $\inf\{s \in [0,t]: (s,g(s)) \in \mathcal{R}_{\op}\} < t$. Then there exists $s<t$ such that $(f,s) := \left(g_{\llcorner [0,s]},s\right) \in \Gamma^{<}$ and $(s, f(s)) \in \mathcal{R}_{\op}$. By definition of $\mathcal{R}_{\op}$, it follows that there exists another path $(k,u) \in \Gamma$ such that $k(u) = f(s)$ and $u  < s$. But then $\big( (f,s), (k,u)\big) \in \SG\cap \big(\Gamma^<\times \Gamma\big)$ which cannot be the case. Hence,
\begin{equation*}
  (g,t) \in \Gamma \implies \inf\{s \in [0,t]: (s,g(s)) \in \mathcal{R}_{\cl}\} \le t
  \le \inf\{s \in [0,t]: (s,g(s)) \in \mathcal{R}_{\op}\}.
\end{equation*}

\noindent {\bf Step 3.} Now consider $\omega \in \Omega$ such that
$(g,t) = \left( \left(B_{s}(\omega)\right)_{s \le\hat \tau(\omega)},
 \hat \tau(\omega)\right) \in \Gamma$. Then it follows immediately that:
\begin{equation}\label{eq:RootSandwich}
   \tau_{\cl}(\omega):= \inf\{s : (s,B_s(\omega)) \in \mathcal{R}_{\cl}\} \le \hat\tau(\omega) \le \inf\{s  : (s,B_s(\omega)) \in \mathcal{R}_{\op}\} =: \tau_{\op}(\omega).
 \end{equation}
 We finally observe that $\tau_\cl = \tau_\op$ a.s.\ by the strong Markov property, and the fact that one-dimensional Brownian motion immediately returns to its starting point.
\end{proof}

A consequence of this proof is that (on a given stochastic basis) there exists exactly one solution of the Skorokhod embedding problem which minimizes $\E[h(\tau)]$; this property was first established in \cite{Ro76}, together with the optimality property of Root's solution. To see this, assume that minimizers $\tau_1$ and $\tau_2$ are given. Then we can use an independent coin-flip to define a new minimizer $\bar \tau$ which is with probability $1/2$ equal to $\tau_1$ and with probability $1/2$ equal to $\tau_2$. By Theorem \ref{RootThm}, $\bar \tau$ is of barrier type and hence $\tau_1=\tau_2$. 
\begin{remark} \label{rem:ProofStructure}
  We highlight here the nature of the proof of Theorem~\ref{RootThm}. The proof divides into three steps, two of these steps (Steps 1 and 3) being probabilistic in nature, making arguments about random variables on a particular probability space. The second step, however, is purely a pointwise argument about the properties of subsets of $\Gamma$ in relation to the function $\gamma$ which we look to optimize. The latter arguments are \emph{not} probabilistic in nature.
\end{remark}

\begin{remark} \label{rem:Loynes} The following argument, due to Loynes~\cite{Lo70}, can be used to argue that barriers are unique in the sense that if two barriers solve \eqref{SkoSol}, then their hitting times must be equal. Suppose that $\mathcal{R}$ and $\mathcal{S}$ are both \emph{closed} barriers which embed $\mu$. Note that we can take the closed barriers without altering the stopping properties. Consider the barrier $\mathcal{R} \cup \mathcal{S}$: let $A \subset \Omega_{\mathcal{R}} := \{x: (t,x) \in \mathcal{S} \implies (t,x) \in \mathcal{R}\}$. Then $\P(B_{\tau_{\mathcal{R} \cup \mathcal{S}}} \in A) \le \P(B_{\tau_{\mathcal{R}}} \in A) = \mu(A)$. Similarly, for $A' \subset \Omega_{\mathcal{S}} := \{x: (t,x) \in \mathcal{R} \implies (t,x) \in \mathcal{S}\}$, $\P(B_{\tau_{\mathcal{R} \cup \mathcal{S}}} \in A') \le \P(B_{\tau_{\mathcal{S}}} \in A') = \mu(A')$. Since $\mu(\Omega_{\mathcal{R}} \cup \Omega_\mathcal{S}) = 1$, $\tau_{\mathcal{R} \cup \mathcal{S}}$ embeds $\mu$.

  It is known (see Monroe~\cite{Mo72}) that, when $\mu$ has a second moment, the second condition in \eqref{SkoSol}, $\E[\tau] < \infty$ is equivalent to minimality of the stopping time (recall \eqref{eq:minimalDefn}). It immediately follows from the argument above that if the barriers $\mathcal{R}$ and $\mathcal{S}$ solve \eqref{SkoSol}, then $\tau_{\mathcal{R}} = \tau_{\mathcal{S}}$ a.s.
  With minor modifications the argument of Loynes also applies to the Rost solution discussed below as well as to a number of further classical embeddings presented in Section \ref{sec:embeddings-abundance} below.
\end{remark}
In Section~\ref{sec:root-rost-embeddings} we will prove generalizations of Theorem~\ref{RootThm} which admit similar conclusions in $\R^d$ and for general initial distributions.

\subsection{The Rost embedding} \label{sec:rost-embedding}

\begin{figure}
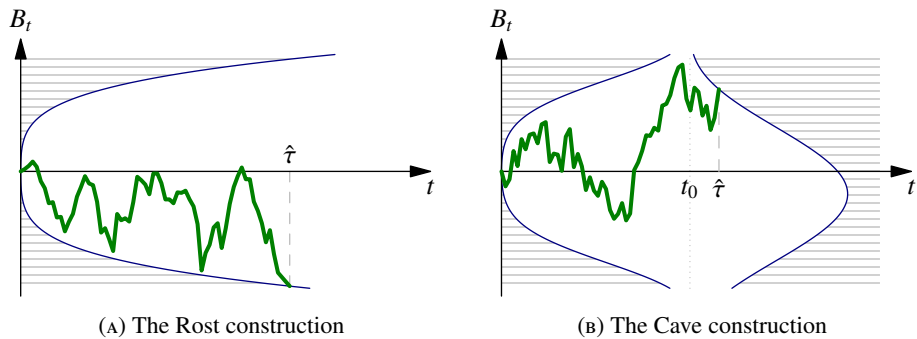

\begin{subfigure}{.5\textwidth}
\centering
\begin{asy}[width=0.9\textwidth]
  import graph;
  import stats;
  import patterns;

  // import settings;

  // gsOptions="-P"; 

  // Construct a Browniam motion of time length T, with N time-steps
  int N = 100;
  //int N = 300;
  real T = 1.6*1.6;
  real dt = T/N;
  real B0 = 0;

  real sig = 0.7;

  real xmax = 0.8;
  real xmin = -0.8;

  real tmax = (xmax-xmin)*1.6;

  real[] B; // Brownian motion
  real[] t; // Time

  path BM;

  // Seed the random number generator. Delete for a "random" path:
  srand(91);

  B[0] = B0;
  t[0] = 0;

  BM = (t[0],B[0]);

  // Define a barrier

  real R(real y) {return (y**4)*5.5 + (y**2)/6+(y**2)*(y+0.5)**2*0.19;}

  int H = N+1;
  int H2 = N+1;
  int BMstop;
  int BMstop2 = N+2;

  for (int i=1; i<N+1; ++i)
  {
    B[i] = B[i-1] + sig*Gaussrand()*sqrt(dt);
    t[i] = i*dt;

    if ((H==N+1)&&(t[i]<R(B[i])))
    {
	H = i;
	BMstop = length(BM);
       while (R(B[i]) > (i+2)*dt)
       {
         B[i] = (0.95*B[i]+0.05*B[i-1]);
       }
       BM = BM--(R(B[i]),B[i]);
    }
    else
    {
      BM = BM--(t[i],B[i]);
    }

  }

  if (H==N+1)
  BMstop = length(BM);

  pen p = deepgreen + 1.5;
  pen p2 = lightgray + 0.25;
  //pen p2 = mediumgray + 1;

  //if (H<N+1)
  //draw(subpath(BM,BMstop,BMstop2),p2);

  pair tau = point(BM,BMstop+1);

  pen q = black + 0.5;

  real eps1 = 0.05;
  real eps2 = 0.15;

  path barrier = (graph(R,identity,xmin+eps1,xmax-eps1)--(0,xmax-eps1)--(0,xmin+eps1)--cycle);

  add("hatch",hatch(1mm,W,mediumgray));

  fill(barrier,pattern("hatch"));

  draw(graph(R,identity,xmin+eps1,xmax-eps1),NW,deepblue+0.5);

  draw((tau.x,0)--tau,mediumgray+dashed);
  label("$\hat\tau$",(tau.x,0),(0,1));

  draw((0,xmin)--(0,xmax+eps1),q,Arrow);
  draw((0,0)--((T+eps1),0),q,Arrow);

  draw(subpath(BM,0,BMstop+1),p);

  label("$t$",(T+eps1,0),S);
  label("$B_t$",(0,(xmax+eps1)),(0,1));

  //label("$B_{\Rt}$",(3.5,0.5),UnFill(0.5mm));

  //label("$D_{\Rt}$",(2.2,1.35),UnFill(0.5mm));
\end{asy}

\caption{The Rost construction}
\end{subfigure}%
\begin{subfigure}{.5\textwidth}
\centering
\begin{asy}[width=0.9\textwidth]
  import graph;
  import stats;
  import patterns;

  // import settings;

  // gsOptions="-P"; 

  // Construct a Browniam motion of time length T, with N time-steps
  int N = 100;
  //int N = 300;
  real T = 1.6*1.6;
  real dt = T/N;
  real B0 = 0;

  real sig = 0.7;

  real xmax = 0.8;
  real xmin = -0.8;

  real tmax = (xmax-xmin)*1.6;

  real t0 = 1.2;

  real[] B; // Brownian motion
  real[] t; // Time

  path BM;

  // Seed the random number generator. Delete for a "random" path:
  srand(89);

  B[0] = B0;
  t[0] = 0;

  BM = (t[0],B[0]);

  // Define a barrier

  real R1(real y) {return t0 + ((y-1)**4)*((y+1.2)**2)*((y+1.4)**2)/3;}
  real R2(real y) {return (y**2)*exp(-3*y**2)/2 + t0*(1-exp(-6*y**4));}

  int H = N+1;
  int H2 = N+1;
  int BMstop;
  int BMstop2 = N+2;

  for (int i=1; i<N+1; ++i)
  {
    B[i] = B[i-1] + sig*Gaussrand()*sqrt(dt);
    t[i] = i*dt;

    if ((H==N+1)&&(t[i]>=R1(B[i]))||((H==N+1)&&(t[i]<=R2(B[i]))))
    {
	H = i;
	BMstop = length(BM);
       while (R1(B[i])<=t[i-1])
       {
         B[i] = 0.75*B[i]+0.25*B[i-1];
       }
       BM = BM--(R1(B[i]),B[i]);
    }
    else
    {
      BM = BM--(t[i],B[i]);
    }

  }

  if (H==N+1)
  BMstop = length(BM);

  pen p = deepgreen + 1.5;
  pen p2 = lightgray + 0.25;
  //pen p2 = mediumgray + 1;

  //if (H<N+1)
  //draw(subpath(BM,BMstop,BMstop2),p2);

  pair tau = point(BM,BMstop+1);

  pen q = black + 0.5;

  real eps1 = 0.05;
  real eps2 = 0.15;

  path barrier1 = (graph(R1,identity,xmin+eps1,xmax-eps1)--(tmax-eps2,xmax-eps1)--(tmax-eps2,xmin+eps1)--cycle);
  path barrier2 = (graph(R2,identity,xmin+eps1,xmax-eps1)--(0,xmax-eps1)--(0,xmin+eps1)--cycle);

  add("hatch",hatch(1mm,W,mediumgray));

  fill(barrier1,pattern("hatch"));
  fill(barrier2,pattern("hatch"));

  draw((t0,xmin+eps1)--(t0,xmax-eps1),mediumgray+dotted);
  label("$t_0$",(t0,0),S);

  draw(graph(R1,identity,xmin+eps1,xmax-eps1),NW,deepblue+0.5);
  draw(graph(R2,identity,xmin+eps1,xmax-eps1),NW,deepblue+0.5);

  draw((0,xmin)--(0,xmax+eps1),q,Arrow);
  draw((0,0)--((T+eps1),0),q,Arrow);

  draw((tau.x,0)--tau,mediumgray+dashed);
  label("$\hat\tau$",(tau.x,0),S);

  draw(subpath(BM,0,BMstop+1),p);

  label("$t$",(T+eps1,0),S);
  label("$B_t$",(0,(xmax+eps1)),(0,1));

  //label("$B_{\Rt}$",(3.5,0.5),UnFill(0.5mm));

  //label("$D_{\Rt}$",(2.2,1.35),UnFill(0.5mm));
\end{asy}
\caption{The Cave construction}
\end{subfigure}
\caption{The barriers corresponding to the Rost and Cave embeddings}
\end{figure}

 A set $\mathcal R \subseteq \R_+\times \R$ is an \emph{inverse barrier} if $(s,x)\in \mathcal R$ and  $s > t$ implies that $(t,x)\in \mathcal R$. 
It has been shown by Rost \cite{Ro76} that under the condition $\mu(\{0\})=0$ there exists  an inverse barrier  such that the corresponding hitting time (in the sense of \eqref{RootType}) solves the Skorokhod problem. It is not hard to see that without this condition some additional randomization is required.
We derive this using an argument almost identical to the one above.

\begin{theorem}\label{RostThm} Suppose $\mu(\{0\}) = 0$. Let $\gamma(f,t)= h(t)$, where $h:\R_+\to \R_+$ is a strictly \emph{concave} function such that \eqref{IntPri} is well posed. Then a minimizer $\hat \tau$ of \eqref{IntPri} exists, and moreover for any minimizer $\hat \tau$, there exists an inverse barrier $\mathcal R$ such that $\hat \tau=\inf\{ t \geq 0 : (t,B_t)\in\mathcal R\}$. In particular the Skorokhod embedding problem has a solution which is the hitting time of an inverse-barrier.
\end{theorem}
\begin{proof} Our proof follows closely the proof of Theorem~\ref{RootThm}. In particular, Steps 1 and 2 can be carried out almost verbatim to get an optimizer $\hat \tau$ and a \emph{$\gamma$-monotone} set $\Gamma\subseteq S$ such that $\P(((B_t)_{t\leq\hat\tau},\hat \tau)\in \Gamma)=1$. By concavity of $h$, the set of stop-go pairs is now given by 
$$\BP=\{((f,s),(g,t))\in S\times S: f(s)=g(t), s<t\}.$$
We remove all paths $(f,s)$ with $f(s)=0$ from $\Gamma$, as $\mu(\{0\})=0$ this does not alter the full support property (or the $\gamma$-monotone property).
Next we define inverse barriers by
\begin{align*}
\mathcal R_\op:=\{(s,x):\exists (g,t)\in\Gamma, g(t)=x, s< t \},\\
\mathcal R_\cl:=\{(s,x):\exists (g,t)\in\Gamma, g(t)=x, s\leq t\}.
\end{align*}
 Denoting the respective hitting times by $\tau_{\op}$ and $\tau_{\cl}$ the argument familiar from the Root case yields 
 $\tau_\cl \leq \hat \tau\leq \tau_\op$ a.s.\ and
it remains to  show $\tau_\cl = \tau_\op$ a.s.\  
 The argument is slightly more involved than in the Root case but again entirely probabilistic:

We define  $b(t) := \inf\{x > 0: (t,x) \in \mathcal{R}_\cl \}$, $c(t) := \sup\{x<0: (t,x) \in \mathcal{R}_\cl\}$ and note that 
$$\ \inf\{ t > 0: B_t \not\in (c(t),b(t))\}\leq \tau_{\cl} \leq \tau_{\op} \leq \inf\{ t > 0: B_t \not\in [c(t),b(t)]\}.$$ 
Concentrating on the function $b$, we have for $\eps > 0$
$$ \underbrace{\inf\{t>0: B_t\geq b(t)\}}_{=: \sigma_b} \leq \underbrace{\inf\{t>0: B_t> b(t)\}}_{=: \sigma_b^+} \leq \underbrace{\inf\{t>0: B_t-\eps t\geq b(t)\}}_{=: \sigma_b^\eps} .
$$
By Girsanov's Theorem, $\lim_{\eps\to 0} \P(\sigma^\eps_b \leq t) = \P(\sigma_b \leq t)$ for each $t\in \R_+$ hence $ \sigma_b^+= \sigma_b$ a.s. 

Arguing likewise on $c$, we obtain $\tau_\cl = \tau_\op$ a.s.
\end{proof}

As in the case of the Root embedding we obtain that the minimizer of $\E[h( \tau)]$  is unique.

\subsection{The cave embedding}
In this section we give an example of a new embedding that can be derived from Theorem~\ref{GlobalLocal}. It can be seen as a unification of the Root and Rost embeddings.  A set $\mathcal R \subseteq \R_+\times \R$ is a \emph{cave barrier} if there exists $t_0\in\R_+$, an inverse barrier $\mathcal R^0\subseteq [0,t_0]\times\R$ and a barrier $\mathcal R^1 \subseteq [t_0,\infty)\times \R$ such that $\mathcal R=\mathcal R^0\cup \mathcal R^1.$ We will show that there exists a cave barrier such that the corresponding hitting time (in the sense of \eqref{RootType}) solves the Skorokhod problem.  We derive this using an argument similar to the one above:

Fix $t_0\in\R$ and pick a continuous function $\phi:\R_+\to [0,1]$ such that
\begin{itemize}
 \item $\phi(0)=0, \lim_{t\to\infty}\phi(t)=0, \phi(t_0)=1$
\item $\phi$ is strictly concave on $[0,t_0]$ 
\item $\phi$ is strictly convex on $[t_0,\infty)$.
\end{itemize}
It follows that $\phi$ is strictly increasing on $[0,t_0]$ and
strictly decreasing on $[t_0,\infty)$.  
\begin{theorem}[Cave embedding]\label{CaveEmb}
Suppose $\mu(\{0\}) = 0$. Let $\gamma(f,t)= \phi(t)$. Then a minimizer $\hat \tau$ of \eqref{IntPri} exists, and moreover for any minimizer $\hat \tau$, there exists a cave barrier $\mathcal R$ such that $\hat \tau=\inf\{ t \geq 0 : (t,B_t)\in\mathcal R\}$. In particular the Skorokhod embedding problem has a solution which is the hitting time of a cave barrier.
\end{theorem}
Since this construction does not already appear in the literature, we  emphasize that the result remains true for  integrable (centered) measures $\mu$ (see Section 7).
\begin{proof}[Proof of Theorem \ref{CaveEmb}]
Note that since $\phi$ is bounded, the problem \eqref{IntPri} is well posed. Following the steps of the proofs of Theorems~\ref{RootThm} and \ref{RostThm}, we find an optimizer $\hat \tau$ and a $\gamma$-monotone set $\Gamma\subseteq S$ such that $\P(((B_t)_{t\leq\hat\tau},\hat \tau)\in \Gamma)=1$. 
The set of stop-go pairs is given by 
\begin{align*}
 \BP=\{((f,s),(g,t))\in S\times S: f(s)=g(t); s < t\leq t_0 \text{ or } t_0\leq t < s\}.
\end{align*}
Indeed, for $s<t\leq t_0$ and any $(h,r)\in S$ we have
\begin{align*}
\gamma ((f\oplus h, s+r)) + \gamma((g,t))\ & >\  \gamma ((f, s)) + \gamma((g\oplus h,t + r))\\
\Leftrightarrow \quad \phi(s+r)-\phi(s)\ &>\ \phi(t+r)-\phi(t)
\end{align*}
which holds iff $t\mapsto\phi(t+r)-\phi(t)$ is strictly decreasing on $[0,t_0]$ for all $r>0.$ If $t+r,t\in[0,t_0]$ this follows from concavity of $\phi$. In the case that $t\leq t_0, t+r>t_0$ this follows since $\phi'$ is strictly positive on $[0,t_0)$ and strictly negative on $(t_0,\infty).$ The case $t_0\leq t<s$ can be established similarly.

Then, we define an `open' cave barrier by$$ \mathcal R_{\textsc{op}}^0:=\{(t,x): \exists (f,s)\in \Gamma, t<s\leq t_0\},\quad \mathcal R_{\textsc{op}}^1:=\{(t,x): \exists (f,s) \in \Gamma, t_0\leq s < t\} $$
and $\mathcal R_{\textsc{op}}:=\mathcal R_{\textsc{op}}^0 \cup \mathcal R_{\textsc{op}}^1$ (resp.\ a `closed' cave barrier where we allow $t\leq s$ and $s\leq t$ in $\mathcal R_{\textsc{cl}}^0$ and $\mathcal R_{\textsc{cl}}^1$ resp.). We denote the corresponding hitting time by $\tau_{\mathcal R_{\textsc{op}}}=\tau_{\mathcal R_{\textsc{op}}^0}\wedge \tau_{\mathcal R_{\textsc{op}}^1}$ (resp. $\tau_{\mathcal R_{\textsc{cl}}}$). 

By the same argument as for the Root and Rost embeddings it then follows that  $\tau_{\mathcal R_{\textsc{cl}}}\leq \hat\tau\leq \tau_{\mathcal R_{\textsc{op}}}$ a.s.\ and also that 
 $\tau_{\mathcal R_{\textsc{cl}}}=\tau_{\mathcal R_{\textsc{op}}}$  a.s., proving the claim.
\end{proof}

\subsection{Remarks}
In Section~\ref{sec:root-rost-embeddings} we will show that the arguments above can be adapted to prove the existence of Rost and Root embeddings in a more general setting. Specifically, in Sections \ref{sec:embeddings-abundance} and \ref{sec:feller} we will show that this approach generalizes to a multi-dimensional setup and (sufficiently regular) Markov processes. In the case of the Root embedding it does not matter for the argument whether the starting distribution is a Dirac in $0$ as in our setup or a more general distribution $\lambda$. For the Rost embedding a general starting distribution is slightly more difficult. In the case where $\lambda$ and $\mu$ have common mass, then it may be the case that $\proj_{\R_+}(\mathcal{R}_\cl \cap (A \times \R_+)) = \{0\}$ for some set $A$ --- that is, all paths which stop at $x \in A$ do so at time 
zero.  In this case it is possible that $\hat\tau < \tau_\op$ when the process starts in $A$, and in general, some proportion of the paths starting on $A$ must be stopped instantly. As a result, in the case of general starting measures, independent randomization is necessary. In the Rost case, it is also straightforward to compute the independent randomization which preserves the embedding property.
 
Other recent approaches to the Root and Rost embeddings can be found in \cite{GaMiOb14,ObRe13,CoPe12,CoWa12}. These papers largely exploit PDE techniques, and as a consequence, are able to produce more explicit descriptions of the barriers, however the methods tend to be highly specific to the problem under consideration.
 
\section{Preliminaries on stopping times and filtrations}
\label{sec:prel-stopp-times}

A key feature of this article is that we are taking a non-standard perspective on stopping times; 
the main purpose of this section is to provide a convenient framework.
To this end, we need to discuss connections between common notions defined on an arbitrary probability space and their related notions defined on the canonical path space $\CRo$ and the space $S$. We then see (by Lemma \ref{lem:equivOpt}, Theorem \ref{thm:equiv RST}) that in the context of our optimization problem, rather than studying the class of all possible stopping times, we can equivalently focus on \emph{randomized} stopping times on the canonical space. These can be characterized in various equivalent terms (cf.~Theorem \ref{thm:equiv RST}); e.g.\ viewing them as measures on $\CRo\times \R_+$ is useful to establish compactness results while the representation through `increasing' functions on $S$ is necessary for the manipulations of stopping times which we need to consider in the proof of the monotonicity principle, Theorem \ref{GlobalLocal}, in Section 5.
Finally, we shall consider the set of `joinings' which can be interpreted as a type of coupling between a randomized stopping time and an abstract probability measure. This is an important ingredient in the proofs of Theorem \ref{DualityIntro} and Theorem \ref{GlobalLocal}.

\subsection{Spaces and Filtrations}

We will primarily consider the space $\CRo$ of continuous functions on $\R_+$ starting at the value $0$, with the topology of uniform convergence on compact sets. The elements of $\CRo$ will be denoted by $\omega$. We denote the canonical process on $\CRo$ by $(B_t)_{t\geq 0}$, i.e.\ $B_t(\omega)=\omega_t.$ We denote the Wiener measure by $\W$. As explained above we consider the set $S$ of all continuous functions defined on some initial segment $[0,s]$ of $\R_+$ and starting with value $0$; we will denote the elements of $S$ by $(f,s)$ and $ (g,t)$. The set $S$ admits a natural partial ordering; we say that $(g, t)$ extends $(f,s)$ if $t\geq s $ and the restriction $ g_{\llcorner[0,s]}$ of $g$ to the interval $[0,s]$ equals $f$.  We consider $S$ with the topology induced by the metric
\begin{align}\textstyle\label{STop}
d_S((f,s),(g,t)):=\max\big( t-s, \sup_{0\leq
    u\leq s}|f(u)-g(u)|, \sup_{s\leq u\leq t} |g(u)-f(s)| \big),
\end{align}
for $(f,s),
(g, t)\in S, s\leq t$. Equipped with this topology, $S$ is a Polish space.

For our arguments it will be important to be precise about the relationship between the sets ${\CRo} \times \R_+$ and $S$. We therefore discuss the underlying filtrations in some detail.

We consider two different filtrations on the Wiener space ${\CRo}$, the canonical or natural filtration $\F^0=(\F_t^0)_{t\in\R_+}$  as  well as  its usual augmentation $\F^a=(\F^a_t)_{t\in\R_+}$.
As Brownian motion is a continuous Feller process, all right-continuous $\F^a$-martingales are continuous (\cite[Theorem~VI. 15.4]{RoWi00}) and hence all $\F^a$-stopping times are predictable and the $\F^a$-optional and $\F^a$-predictable $\sigma$-algebras coincide (\cite[Corollary IV 5.7]{ReYo99}). By \cite[Theorem\ IV.\ 97, Rem.\ IV.\ 98]{DeMeA} we also have that the $\F^0$-predictable, $\F^0$-optional and $\F^0$-progressive $\sigma$-algebras coincide because ${\CRo}$ is the set of \emph{continuous} paths. Moreover, we will use the following result.  
\begin{theorem}\label{indistinguishable predictable}
\comment{DC (3) We have change the wording of the Theorem. The first part is stated in \cite[Theorem IV. 78]{DeMeA}, the second assertion is derived below it.  }
Let $(\Omega,\G,(\G_t)_{t\in\R_+},\P)$ be a filtered probability space and let $\G^a $ be the usual augmentation of the filtration $\G$. 
\begin{enumerate}
 \item If $\tau$ is a predictable time wrt $\G^a$, then there exists a
  predictable time $\tau'$ wrt $\G$ such that $\tau=\tau'$ a.s. For every $\G^a$-predictable process $(X_t)_{t\in\R_+}$
  there is a $\G$-predictable process $(X_t')_{t\in\R_+}$ which is
  indistinguishable from $(X_t)_{t\in\R_+}.$ 
\item {\color{black} If $(A_t)_{t\in \R_+}$ is an increasing right-continuous $\G^a$-predictable process there is an increasing  right-continuous $\G$-predictable process $(A_t')_{t\in\R_+}$ (possibly assuming the value $+\infty$) which is
  indistinguishable from $(A_t)_{t\in\R_+}$.} \end{enumerate}
\end{theorem}
\begin{proof}
For   Statement (1) we refer to \cite[Theorem IV. 78]{DeMeA} and the comments directly afterwards.  
To prove  statement (2), let $(A_t)_{t\in \R_+}$ be an increasing right-continuous $\G^a$-predictable process. Arguing on $(\frac2\pi\arctan (A_t-A_0))_{t\in \R_+}$, we may assume that $A$ takes values in $[0,1]$. 

We use an extension of the filtered probability space denoted $(\ol{\Omega},\ol{\G},(\ol{\G}_t)_{t \ge 0},\ol{\P})$, where  we take $\ol{\Omega} = \Omega \times [0,1]$, $\ol{\G} = \G \otimes \mathcal{B}([0,1]), \ol{\P}(D_1\times D_2) = \P(D_1) \Leb(D_2)$, and set $\ol{\G}_t= \G_t \otimes \mathcal{B}([0,1])$ and let $\ol{\G}^a$ be its usual augmentation. Here, $\Leb$ denotes Lebesgue measure. Abusing notation we also write $A$ for the mapping $(\omega,x,t) \mapsto A_t(\omega)$ on $ \ol \Omega \times \R_+$. 

Set $Y(\omega, x):=x$. Then $A-Y$ is  $\ol{\G}^a$-predictable and right-continuous, hence
$$\rho(\omega, x):=\inf\{t\geq 0: A_t(\omega)\geq x\}=
 \inf\{ t\geq 0 : A_t(\omega)- Y(\omega, x)\geq 0\}$$ is a $\ol{\G}^a$-predictable stopping time by the (predictable) Debut theorem. Moreover $$A_t(\omega)= \inf\{ x\geq 0: \rho(\omega, x) > t\}= 1- \Leb\{ x: \rho(\omega,x)> t \}.$$
  Pick a $\ol{\G}$-predictable stopping time $\rho'$ such that $\rho'=\rho$, $\ol{\P}$-a.s.\ and set 
$$A_t'(\omega):=1-\Leb\{ x: \rho'(\omega,x)> t \}.$$ 
 Then $A'(\omega)$ is increasing and right-continuous for each $\omega$. For each $t$
 $$\Leb\{ x: \rho'(\omega,x)> t \} = \Leb\{ x: \rho(\omega,x)> t \}$$
 for $\P$-a.a.\ $\omega$, hence $A'$ is a version of $A$. By
 right-continuity, $A$ and $A'$ are indistinguishable. 
 Predictability of $\rho'$ asserts that (using obvious abbreviations)
$$\{(\omega,x,t): \rho'(\omega, x)> t\}\in \mathsf{pred}_{\ol{\G}}=\mathsf{pred}_{{\G}}\otimes \mathcal{B}_{[0,1]}. $$
Hence $(\omega,t) 
\mapsto A'_t(\omega)$ is  $\mathsf{pred}_{{\G}}$-measurable.
\end{proof}

The message of Theorem \ref{S2F} below is that a process $(X_t)_{t\in\R_+}$ is $\F^0$-optional (and hence also $\F^0$-predictable in our setup) iff $X_t(\omega)$ can be calculated from the restriction $\omega_{\llcorner [0,t]}$. We introduce the mapping
\begin{align}\label{TheRestr} 
  r:{\CRo} \times \R_+ \to S, \quad r(\omega,t)= (\omega_{\llcorner [0,t]},t). 
\end{align}
We note that the topology on $S$ introduced in \eqref{STop} coincides with the final topology induced by the mapping $r$; moreover $r$ is a continuous open mapping.

The following result is a particular case of \cite[Theorem IV. 97]{DeMeA} (in somewhat different notation).
\begin{theorem}\label{S2F} $\F^0$-optional sets and functions on ${\CRo} \times \R_+$ correspond to Borel measurable sets and functions on $S$. More precisely we have: 
 \begin{enumerate}
 \item A set $D\subseteq {\CRo}\times \R_+$ is $\F^0$-optional iff $D=r^{-1}(A)$ for some Borel set $A\subseteq S$.
\item A process $X=(X_t)_{t\in\R_+}$ is $\F^0$-optional iff $X=H\circ r$ for some Borel measurable $H:S\to\R$.  \end{enumerate}
\end{theorem}

The mapping $r$ is not a closed mapping: it is easy to see that there exist closed sets in ${\CRo} \times \R_+$ with a non-closed image under $r$. However this does not happen for closed optional sets: it is straightforward that an $\F^0$-optional set $A\subseteq {\CRo} \times \R_+$ is closed iff the corresponding set $r(A)$ is closed in $S$.

\begin{definition}\label{def:verycontinous} If $X$ is an $\F^0$-optional process we write $X^S$ for the unique function $S\to \R$ satisfying $X=X^S\circ r$.
  We say that an optional process $X$ is $S$\!-continuous (resp.\ $S$\!-lsc) if the corresponding function $X^S: S \to \R$ is continuous (resp.\ lsc).
\end{definition}
It is trivially true that an $S$\!-continuous process is continuous in the usual pathwise sense.  The converse is not generally true --- consider the case where $X_t(\omega)=\mbox{sign}(\omega(1))(t-2)_+$.  This is a continuous, optional process, however the corresponding function $X^S$ is not a continuous mapping from $S$ to $\R$. Other examples arise from functions connected to the local time of Brownian motion, cf.\ Section \ref{ValoisSection}.

\begin{definition}\label{EAverage} 
For a measurable $X:{\CRo}\to \R$  which is bounded or positive we set
  \comment{DC(8) Changed according to the suggestion.} 
\begin{align}\label{EAverageEQ}
\E[X|\F_t^0](\omega):= X^M_t(\omega):=\textstyle \int X((\omega_{\llcorner[0,t]})\oplus \omega')\, d\W(\omega').
\end{align}
\end{definition}
Clearly, \eqref{EAverageEQ} defines an $\F^0_t$-measurable function which is a version of the classical conditional expectation; subsequently, it will be useful to have this function defined for \emph{all} $\omega$. In accordance with Definition \ref{def:verycontinous} we write $X^{M,S}$ for the function satisfying $X^M =  X^{M,S}\circ r$.
\begin{proposition}\label{prop:very cont} \comment{DC(9) We replaced the first $X_t$ by $X^M_t$, got rid of the second sentence and added  line about $X^M_\infty$.} 
 Let $X\in C_b({\CRo})$. Then $X^M_t$ is an $S$\!-continuous martingale, 
   $X^M_\infty=\lim_{t\to \infty} X^M_t$ exists and equals $X$.
\end{proposition}

\begin{proof} 
Note that
$X^{M,S}(f,s)=\textstyle \int X^{(f,s)\oplus}(\omega)\, \W(d\omega)$ for $(f,s)\in S$.
Also, 
$(f_n,s_n)\to(f,s)$ implies $f_n\oplus \omega \to f\oplus \omega$  for  $\omega\in {\CRo}$  and,  by continuity of $X$, 
$X^{(f_n,s)\oplus}(\omega)\to X^{(f,s)\oplus}(\omega)$. 
Since $X$ is bounded,  dominated convergence implies 
$X^{M,S}(f_n,s_n)\to X^{M,S}(f,s).$
\end{proof} 
 For $X\in C_b({\CRo})$,  $X^M$ is a martingale with continuous paths and hence satisfies  the optional stopping theorem. Using the functional monotone class theorem, we see that the optional stopping theorem holds for $X^M$ for all bounded measurable $X: \CRo\to \R$.
Also one can prove that $X^M$ has almost surely continuous paths, even if $X$ itself was not continuous, but we will not use this fact.

\subsection{Randomized stopping times}\label{RSTTopology}

Working on the probability space $(\CRo, \W)$, a stopping time $\tau$ is a mapping which assigns to each path $\omega$ the time $\tau(\omega) $ at which the path is stopped. If the stopping time depends on external randomization, then we may consider a path $\omega$ which is not stopped at a single point $\tau(\omega)$, but rather that there is a sub-probability measure $\tau_\omega$ on $\R$ which represents the probability that the path $\omega$ is stopped at a given time, conditional on observing the path $\omega$. The aim of this section is to make this idea precise, and to establish connections with related properties in the literature. Specifically, the notion of a \emph{randomized stopping time} has previously appeared in e.g.\ \cite{BaCh77,meyer_convergence_1978,Ro71}.

Subsequently we will identify randomized stopping times as a subset of
 the well studied $\mathbf{P}$-measures:  
 A finite measure $\xi$ on $\CRo\times\R_+$ is a $\mathbf{P}$\emph{-measure} (wrt $\W$) if it does not charge any $\W$-evanescent set. 
A basic result of  Dol\'eans  \cite{Doleans68}  is the following 
\begin{theorem}[cf.\ {\cite[Theorem VI 65]{DeMeB}}]\label{thm:Doleans} 
  A finite measure $\xi$ on $\CRo\times\R_+$ is a $\mathbf{P}$-measure iff there exists a right-continuous increasing process $A$, $\E[A_\infty]<\infty$ such that for all bounded and measurable processes $X$
$$\xi(X)= \textstyle\E\big[\int
 X_s\,dA_s\big].  $$ 
Here the process $A$ is unique up to evanescence. 
\end{theorem}

We will be particularly interested in the following subset of $\mathbf{P}$-measures:
\begin{align*}\mathsf{M}:=&\,\{\xi\in\mathcal P^{\leq 1}( \CRo\times \R_+): \xi(d\omega,dt)=\xi_\omega(dt)\W(d\omega), \xi_\omega\in\mathcal P^{\leq 1}(\R_+) \mbox{ for $\W$-a.e. } \omega\}\\=&\,\{\xi\in\mathcal P^{\leq 1}( \CRo\times \R_+): \proj_{\CRo}(\xi)\leq \W\},
\end{align*}
where $(\xi_\omega)_{\omega\in\CRo} $ is a disintegration of $\xi$ in the first coordinate $\omega\in \CRo$. 
We equip $\mathsf{M}$ with the weak topology induced by the continuous bounded functions on $\CRo\times \R_+$.
Clearly any $\xi\in\mathsf{M}$ is a $\mathbf{P}$-measure with corresponding increasing process $A^\xi_\omega(t)=\xi_\omega([0,t])$ being the cumulative distribution function of $\xi_\omega.$

\begin{definition}[Randomized stopping times]\label{def:RST}
 A measure $\xi\in\M$ is called a randomized stopping time, written $\xi \in \RST$, iff the associated increasing process $A$ is optional. 
\end{definition}

Below, it will sometimes be convenient to represent randomized stopping times on  an extension of the space $(\CRo, \F^0, (\F^0_t)_{t\geq 0},\W)$: we will consider  $(\oCRo,\oF,(\oF_t)_{t \ge 0},\oW)$,  
where $\oCRo = \CRo \times [0,1]$, $ \oW(A_1\times A_2) = \W(A_1) \Leb(A_2)$ (where $\Leb$ denotes Lebesgue measure), $\oF$ is the completion of $\F^0 \otimes \mathcal{B}([0,1])$, and   $\oF_t$ the usual augmentation of $(\F_t^0 \otimes \mathcal{B}([0,1]))_{t \ge 0}$.  We will write $\bar B=(\bar B_t)_{t\ge 0}$ for the process given by $\bar B_t(\omega,u)=\omega_t.$ Observe that if $Y_t(\omega,u) = u$, then $(\bar B_t, Y_t)$ is (trivially) a continuous Feller process, and hence by the same arguments as above, the $\oF$-predictable and $\oF$-optional $\sigma$-algebras coincide. 

Randomized stopping times play a key role in this paper; depending on the respective context, the following different characterizations will be useful: 
\begin{theorem}\label{thm:equiv RST}
Let $\xi\in \mathsf{M}$. Then the following are equivalent:
\begin{enumerate}
\item There is a Borel function $A:S\to[0,1]$ such that the process $A\circ r$ is right-continuous increasing and
\begin{align}\label{AlternativeRep} \xi_\omega([0,s]):=A\circ r(\omega,s)\end{align}
defines a disintegration of $\xi$ wrt to $\W$. 
\item We have $\xi\in \RST$, i.e.\ given a  disintegration $(\xi_\omega)_{\omega\in\CRo}$ of $\xi$, the random variable   $ \tilde A_t(\omega)=\xi_\omega([0,t])$ is $\F^a_t$-measurable for all $t\in \R_+$. 
\item  For all $f\in C_b(\R_+)$ supported on some  $ [0,t]$, $t\geq 0$ and all $g\in C_b(\CRo)$ 
\begin{align}\label{CheckMeasurability}\textstyle\int f(s) (g-\E[g|\F_t^0])(\omega) \, \xi(d\omega, ds)=0\end{align}
\item On the probability space $(\oCRo,\oF,(\oF_t)_{t \ge 0},\oW)$, the random time
  \begin{equation}
    \label{eq:rhodefn}
     \rho(\omega,u) :=\inf \{ t \ge 0 : \xi_\omega([0,t]) \ge u\}
  \end{equation}
  defines an $\oF$-stopping time.
\end{enumerate}
\end{theorem}
\begin{proof}
The equivalence of (1) and (2) follows directly from Theorems \ref{indistinguishable predictable}, \ref{S2F} and \ref{thm:Doleans}.

It is straightforward to deduce (4) from (1). To see that (4) implies (2), consider for $t\geq 0, \omega \in \CRo$ 
\[\tilde A(\omega,t):=\textstyle\int_0^1 \1_{[0,t]}(\rho(\omega,u))  \, du.  \]

To show that (2) and (3) are equivalent, we first note that (2) is equivalent to requiring that $X_t(\omega) := \xi_\omega(f)$ is $\F^a_t$ measurable whenever $f \in C_b(\R_+)$ is supported on $[0,t]$. However we can express this measurability in a different fashion. Note that a bounded Borel function $h$ is $\F_t^a$-measurable iff for all bounded Borel functions $g$ 
\[\E [h (\E[g|\F_t^a]-g)]=\E [h (\E[g|\F_t^0]-g)]\]
vanishes;
of course this does not rely on our particular setup. 
By a functional monotone class argument, for $\F_t^a$-measurability of $X_t$ it is sufficient to check 
that 
\begin{align}\label{MeasurabilityCheck}
\E[X_t (g-\E[g|\F_t^0])]=0
\end{align}
for all $g\in C_b({\CRo})$. 
In terms of $\xi$, \eqref{MeasurabilityCheck} amounts to 
\begin{align*}
0=\E[X_t (g-\E[g|\F_t^0])] \ & =\textstyle\int \, \W (d\omega) \int \xi_\omega(ds) f(s) (g-\E[g|\F_t^0])(\omega)\\
& =\textstyle\int f(s) (g-\E[g|\F_t^0])(\omega) \, \xi(d\omega, ds). \qedhere
\end{align*}
\end{proof}

\begin{remark}\label{r:livelyhood}
\begin{enumerate}\item The function $A$ in \eqref{AlternativeRep} is unique up to indistinguishability (cf.\ Theorem \ref{thm:Doleans}). We will denote this function by $A^\xi$.
\item We will say $\xi\in\RST$ is a \emph{non-randomized stopping time} iff \comment{DC(11) We added that $\xi_\omega$ can also be null (corresponding to $\tau(\omega)=\infty$).} there is a disintegration $(\xi_\omega)_{\omega\in\CRo}$ of $\xi$ such that $\xi_\omega$ is either null (corresponding to a path which is not stopped) or a Dirac-measure (of mass 1) for every $\omega$. Clearly this means that $\xi_\omega= \delta_{\tau(\omega)}$ a.s.\ for some (non-randomized) stopping time $\tau$. $\xi$ is a non-randomized stopping time iff there is a version of $A^\xi$ which only attains the values $0$ and $1$. 
\item We will say $\xi \in \RST$ is a \emph{finite  randomized stopping time} iff $\xi(\CRo \times \R_+) = 1$.
\end{enumerate}
\end{remark}

An immediate  consequence of Theorem \ref{thm:equiv RST} (3) is the following
\begin{corollary} 
 The set $\RST$ is closed wrt the weak topology induced by the continuous bounded functions on ${\CRo}\times \R_+$.  \comment{DC(12) Changed as suggested.}
\end{corollary}

The next lemma implies that optimizing over usual stopping times on a rich enough probability space in \eqref{IntPri} is equivalent to optimizing over randomized stopping times on Wiener space.

\begin{lemma}\label{lem:equivOpt}   Let $B$ be a Brownian motion on some stochastic basis  $(\Omega, \G, (\G_t)_{t\geq 0}, \P)$, let $\tau$ be a $\G$-stopping time and consider
  \begin{align*}\Phi:\Omega\to \CRo\times \R_+, \bar\omega\mapsto
  ((B_t(\bar\omega))_{t\ge 0}, \tau(\bar\omega)).\end{align*}
  Then $\xi:= \Phi(\P)$ is a randomized stopping time
and for any measurable $\gamma:S \to \R$ we have \begin{align}\label{STRep2}\textstyle\int
\gamma((f,s)) \, r(\xi)(d(f,s)) = \E_\P[\gamma((B_{t})_{t \le \tau},\tau)].\end{align}
 If $\Omega$ is sufficiently rich that it supports a uniformly distributed random variable  which is $\G_0$-measurable then for any $\xi \in \RST$, we can  find a $\G$-stopping time $\tau$ such that $\xi= \Phi(\P)$ and  \eqref{STRep2}   holds.
\end{lemma}
\begin{proof}
  Clearly $\xi:=\Phi(\P)\in \M$. Write $(\xi_\omega)_{\omega\in \CRo}$ for a disintegration wrt Wiener measure. We need to show that $\xi_\omega([0,t])$ is $\F_t^a$-measurable. 
Let $g:\CRo\to \R$ be a measurable function. If $h = \E_\W[g|\F_t^a]$, writing $\G_t^a$ for the usual augmentation of $\G$, and noting that $(B_t)_{t\ge 0}$ is also a $(\G_t^a)_{t \ge 0}$-Brownian motion, we have
  \begin{equation*}
    \E_\P[g((B_r)_{r \ge 0})|\G_t^a] = h((B_r)_{r \ge 0}), \quad \mbox{$\P$ - a.s.}
  \end{equation*}
   It then follows that  \begin{align*}
 \textstyle   \int g(\omega) \xi_\omega([0,t]) \, \W(d\omega) 
    & =\textstyle \E_\P[g((B_r)_{r \ge 0}) \1_{\{\tau \le t\}}]\\
    & = \textstyle\E_\P[\E_\P[g((B_r)_{r \ge 0})|\G_t^a]\1_{\{\tau \le t\}}]\\
    & =\textstyle \E_\P[h((B_r)_{r \ge 0}) \1_{\{\tau \le t\}}]
    = \int h(\omega) \xi_\omega([0,t]) \, \W(d\omega).
  \end{align*}
  Hence $\xi_\omega([0,t])$ is $\F_t^a$-measurable as required.

To prove the second part, we observe that by Theorem~\ref{thm:equiv RST} (4), there exists an $\oF$-stopping time $\rho'$ representing $\xi$. Since $\rho'$ is $\oF$-predictable, it follows from Theorem~\ref{indistinguishable predictable} that there exists an almost surely equal $(\F_t^0 \times \mathcal{B}([0,1]))_{t \ge 0}$-stopping time $\rho$. Then we can define a random time on $\Omega$ by $\rho((B_s)_{s \ge 0},Y)$, where $B$ is the Brownian motion, and $Y$ the independent $\G_0$-measurable, uniform random variable. Consider the map $\bar\Phi:\Omega\to\oCRo, \bar\omega\mapsto ((B_t(\bar\omega))_{t\ge 0}, Y(\bar\omega)).$ Since $\rho$ is a $(\F_t^0 \times \mathcal{B}([0,1]))_{t \ge 0}$-stopping time and $\bar\Phi $ is measurable from $ (\Omega, \G_t)$ to $(\oCRo, \F_t^0 \times \mathcal{B} ([0,1]))$, $\rho\circ (B,Y)$ is a $\G$-stopping time.
\end{proof}

\subsection{Randomized stopping times solving the Skorokhod problem and compactness.}

For a finite randomized stopping time $\xi$ and optional $Y:\CRo\times \R_+ \to \R$ which is bounded or positive, define $Y_\xi$ as the push-forward of $\xi $ under the mapping $(t,\omega)\mapsto Y_t(\omega)$ and denote $Y^\xi_t:=Y_{\xi\wedge t}$ for $t\in \R_+$. Considering the representation $\rho$ of $\xi$ on the extended space $\oCRo$ as in $\eqref{eq:rhodefn}$ and writing $\bar Y_t(\omega,u)= Y_t(\omega)$, we then have
 \begin{align}
\bar Y_\rho\sim Y_\xi  \text{ and }  \bar Y^\rho_t \sim Y^\xi_t.
\end{align}
\comment{MB: I just deleted something on DC (18).}
Taking $Y_t = t$ we obtain ${\xi}(T) =  \oE[\rho]$, where $T$ denotes the projection
\begin{align}\label{TimeComponent} T:{\CRo}\times \R_+\to \R_+.\end{align}
Recall that  $\mu $ has mean $0$ and finite second moment
$ \int x^2\, \mu(dx)=:V$. Then the following result follows directly from classical properties of stopping times (e.g. \cite[Corollary~3.3]{Ho11}).

\begin{lemma}\label{MinimalLPT}
Let $\xi \in\RST^1$ with representation $\rho$  on $\oCRo$ as in \eqref{eq:rhodefn}. Assume that $B_\xi = \mu$, i.e.\ $\BB_\rho \sim \mu$.
Then the following are equivalent:
\begin{enumerate}
\item $\xi(T)=\oE [\rho] < \infty$,
\item $\xi(T)=\oE [\rho] = V$,
\item $(\BB_{\rho\wedge t})$ is uniformly integrable.
\end{enumerate} 
\end{lemma}

\begin{definition}
We denote by $\RST(\mu)$ the set of all finite randomized stopping times satisfying the conditions in Lemma \ref{MinimalLPT}. 
\end{definition}

For us it is crucial that \emph{randomized} stopping times have the following property:
\begin{theorem}\label{ComSol}
  The set $\RST(\mu)$ is non-empty and compact wrt the weak topology induced by the continuous and bounded functions on ${\CRo}\times \R_+$.  \comment{DC(12) Changed as suggested.}
\end{theorem}
\begin{proof}

  If $\mu$ is a centered probability then it is not hard to establish that the Skorokhod embedding problem has a solution, e.g.\ one can use the external randomization $u\in [0,1]$ to stop $(\bar B_t(\omega,u))_{t\geq 0}$ once it leaves $(a(u),b(u))$. Choosing $a,b$ carefully we obtain a solution of \eqref{SkoSol}, see e.g.\ \cite[p332]{Ob04} for a detailed account.

By Prokhorov's theorem we have to show that $\RST(\mu)$ is tight and closed. 

\emph{Tightness.}  Fix $\epsilon>0$ and take $R=2V/\eps$. \comment{DC (20) We have changed the definition of $R$ as indicated. It remains to introduce the complement notation.} Then, for any $\xi\in\RST(\mu)$ we have $\xi(T>R)\leq\epsilon/2.$ As ${\CRo}$ is Polish there is a compact set $\tilde K\subseteq {\CRo}$ such that $\W( {\tilde K}\complement)\leq \epsilon/2.$ Set $K:=\tilde K\times [0,R].$ Then $K$ is compact and we have for any $\xi\in\RST(\mu)$
$$ \xi( K\complement) \leq \W({\tilde K}\complement)  + \xi(T>R) \leq\epsilon.$$

\emph{Closedness.}
Take a sequence $(\xi_n)_{n\in\N}$ in $\RST(\mu)$ converging to some $\xi$. Putting $h:{\CRo}\times \R_+\to\R, (\omega,t)\mapsto \omega(t)$ we have to show that $h(\xi)=\mu$ and that $\xi(T)<\infty.$ Note that $h$ is a continuous map. Take any $g\in C_b(\R).$ Then $g \circ h\in C_b({\CRo}\times \R_+)$ and hence
\begin{align*}\textstyle
 \int g\ d\mu = \lim_n \int_{{\CRo}\times \R_+} g \circ h\ d\xi_n = \int_{{\CRo}\times \R_+} g \circ h\ d\xi = \int g\ dh(\xi), 
\end{align*}
thus $h(\xi)=\mu.$ 
Moreover,  $T\wedge N$ is continuous and bounded for each $N\in \N$, hence $\xi (T\wedge N)=\lim_n\xi_n(T\wedge N)\leq V$. As $N$ was arbitrary, it follows that also $\xi (T)\leq V<\infty$.  
\end{proof}

Our use of randomization to achieve compactness of a set of stopping
times has similarities to the work of Baxter and Chacon \cite{BaCh77}.
However their setup is different, and their intended applications are not
connected to Skorokhod embedding.

We close this section with a simple result that connects weak convergence of randomized stopping times with convergence in probability of their representatives on the stochastic basis $(\Omega, \G, (\G_t)_{t\ge 0}, \P)$. First, suppose that $\tau$ is a $\G$-stopping time. Then by the definition of $\xi$ in Lemma~\ref{lem:equivOpt}, it follows that for any measurable and bounded or non-negative $X:\CRo \times \R_+\to\R$ we have $\E_\P[X((B_{t})_{t},\tau)]=\textstyle\int X(\omega,t) \, d\xi(\omega, t)$. Now suppose in addition that the probability space is sufficiently rich to support a uniform $\G_0$-measurable random variable $Y$, independent of the Brownian motion $B$. Recall that if $\xi$ is a finite randomized stopping time and $\rho\circ(B,Y)$ its representative on $\Omega$ given by Lemma~\ref{lem:equivOpt}, then for measurable and bounded or non-negative $X:\CRo \times \R_+\to\R$
\begin{align}\label{DiffRep} \textstyle\int X_t(\omega) \, \xi(d\omega, dt) & =\textstyle \iint  X_{\rho(\omega,u)}(\omega)\,\Leb(du) \W(d\omega) = \E_\P[ X_{\rho((B_t)_{t \ge 0},Y)}((B_t)_{t\ge 0})].
\end{align}
\begin{lemma}\label{prop:LTConv} Let $\xi, \xi_n\in \RST(\mu), n\geq 1$ and denote their representatives on $\Omega$ by $\rho, \rho_n, n\geq 1$. 
 Then $\xi_n \to \xi$ weakly iff $\rho_n \to \rho $ in probability. 
\end{lemma}
\begin{proof} Let $X\in C_b(\CRo \times \R_+)$. By \eqref{DiffRep}
\begin{align*}\textstyle
\int X_t(\omega) \, d(\xi-\xi_n)(\omega, t) = \E_\P \big [ X_{\rho(B,Y)}(B)-X_{\rho_n(B,Y)}(B)\big].
\end{align*}
Considering processes which depend only on the time $t$ but not $B$, i.e.\ $X(z,t)= X(t)$, we obtain that $\xi_n \to \xi$ weakly implies that $\rho_n \to \rho$ in probability. Conversely, if $\rho_n \to \rho$ in probability under $\P$, then also $\rho_n \to \rho$ almost surely along some subsequence of every subsequence. By dominated convergence, $\xi_n\to \xi$ weakly. 
\end{proof}

\subsection{Joinings}\label{s:joinings}

We now add another dimension: we assume that $(\Ys, \nu)$ is some Polish probability space and consider  randomized stopping times where each death of a particle is tagged by an element of $\Ys$. More precisely,  the set of \emph{joinings} \comment{DC (21) Changed as indicated. We have also changed the title of the section accordingly.}
$\TRST(\nu)$
is given by
$$ \Big\{\pi \in\mathcal P^{\leq 1}({\CRo}\times \R_+ \times \Ys): \pr_{{\CRo}\times \R_+}(\pi_{\llcorner {\CRo}\times \R_+ \times D})\in \RST, D\in \mathcal{B}(\Ys), \pr_{\Ys}( \pi) \leq \nu \Big\}.$$
We shall also write $ \TRST^1(\nu)$ for the subset of $\pi\in\TRST(\nu)$ having mass 1.

\begin{remark}\label{ProdPred} 
  Write $\mathsf{pred}$ for the $\sigma$-algebra of $\F^0$-predictable sets in ${\CRo}\times \R_+$.  We call a set $A\subseteq {\CRo}\times \R_+\times \Ys$ predictable if it is an element of $\mathsf{pred}\otimes \mathcal{B}(\Ys)$.  We will say that a function defined on ${\CRo}\times \R_+\times \Ys$ is predictable if it is measurable wrt $\mathsf{pred}\otimes \mathcal{B}(\Ys)$.  As before, predictable subsets of $ {\CRo}\times \R_+\times \Ys$ correspond to measurable subsets of $S\times \Ys$, and similarly for functions.
\end{remark}

\section{The Optimization Problem and Duality}
\label{sec:optim-probl-dual}
\subsection{The Primal Problem}\label{sec:optim-probl-dual-start}
As defined in \eqref{IntPri} in the introduction, our primal problem is to minimize the value corresponding to a function $\gamma:S \to \R$, where the minimization is taken over stopping times of Brownian motion defined on a sufficiently rich probability space. By Lemma \ref{lem:equivOpt}, we obtain an equivalent problem if we take $B$ to be the canonical process on Wiener space $\CRo$ and minimize over all  randomized stopping times, i.e.\ we have
\begin{align}\label{RealPri}  \textstyle P_\gamma= \inf\left\{ \int \gamma\circ r(\omega,t) \, \xi(d\omega, dt): \xi\in \RST(\mu)\right\}.\end{align}
In the following we will mainly work with the technically convenient formulation given in \eqref{RealPri}. It immediately allows us to establish the existence of  optimal stopping times:

\begin{theorem}\label{MinimizerExistsP} Assume that
  $\gamma:S\to \R$ is lsc and bounded from below in the sense that for some constants $a,b,c\in \R_+$
  \begin{align}\label{MartinBound}
-\big( a + b s+ c \max_{r\leq s} B_r^2\Big) \leq   \gamma((B_r)_{r\leq s},s)
   \end{align}
   holds on $\CRo\times\R_+$. Then the functional
 \begin{align}\label{GamFun} \textstyle \xi \mapsto \int_{\CRo\times \R_+} \gamma\circ r(\omega,t)\, \xi(d\omega, dt)\end{align}
is lsc and  \eqref{RealPri} admits a minimizer. 
\end{theorem}
By Lemma \ref{lem:equivOpt}, Theorem \ref{MinimizerExists} is a consequence of this result.
\begin{proof}[Proof of Theorem \ref{MinimizerExistsP} / Theorem \ref{MinimizerExists}] By the Portmanteau theorem, the functional \eqref{GamFun} is lsc if
 $\gamma:S\to\R$ is lsc and bounded from below by a constant.

 For the general case we recall the pathwise version of Doob's inequality (see \cite{AcBePeScTe12})
\begin{align}\label{PathDoob} {\textstyle\max_{r\leq s} B_r^2}\leq \underbrace{\textstyle\int_0^s 4 \max_{t\leq r}|B_t|\, dB_r}_{=:M_s} + {\textstyle4 B_s^2}.\end{align}
We emphasize that we can understand the integral defining $M$ in a \emph{pathwise} fashion. This is possible since $r\mapsto \max_{t\leq r}|B_t|$ is increasing; we refer to \cite{AcBePeScTe12} for details. In fact it is straightforward to show that $M$ is an $S$\!-continuous martingale satisfying $|M_{t}| <2 \max_{r\leq t} B_r^2$.  It follows that $\tilde \gamma(f,s):= \gamma(f,s) + b s+ c (M^S(f,s) + 4 f(s)^2)$ is bounded from below and hence $\xi\mapsto \int \tilde \gamma\, d\xi$ is lsc. As the value of $\int b s+ c (M_s(\omega) + 4 B_s^2(\omega))\, d\xi(\omega, s)$ is the same for any $\xi\in \RST(\mu)$ the functional \eqref{GamFun} is lsc as well.
 \end{proof}
 In Section \ref{sec:feller} below we establish existence of a minimizing stopping time in the case where the measure $\mu$ does not necessarily admit a finite second moment. However we will then replace Assumption \eqref{MartinBound} by the requirement that $\gamma$ is bounded from below. \comment{DC (22) Changed as indicated.}

\subsection{The dual problem}\label{s:robust super-hedging}

The following result implies Theorem \ref{DualityIntro}.

\begin{theorem}\label{HedgingDual} Let $\gamma: S \to \R$ be lsc and  bounded from below in the sense  of \eqref{MartinBound}. Set
$$D_\gamma=\sup\left\{\int \psi(y)\, d\mu(y): \psi \in C(\R),\begin{array}{l}
 \exists \phi, 
\phi\mbox{ is an $S$\!-continuous martingale}, \phi_0 = 0 \\
 \phi_t(\omega) + \psi(\omega(t)) \leq \gamma\circ r(\omega,t),\  (\omega, t)\in \CRo\times \R_+
 \end{array}\!
 \right\}
 $$ where $\phi, \psi$ 
satisfy $|\phi_t| \leq a + bt+c B_t^2 $, $|\psi(y)| \leq a+ b y^2$ for some $a,b,c>0$. 
     Then we have 
\begin{align}\label{HedgingDualEq2} P_\gamma = D_\gamma.\end{align}
\end{theorem}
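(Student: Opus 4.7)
The plan is to prove the two inequalities separately; weak duality $P_\gamma \ge D_\gamma$ is routine, while $P_\gamma \le D_\gamma$ is the main content and is obtained by reduction to Corollary~\ref{CorOTduality}.

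\emph{Weak duality.} For $\xi \in \RST(\mu)$ with associated stopping time $\rho$ on the enlarged space $\ol\Omega$ from Theorem~\ref{thm:equiv RST}, and any dual-feasible $(\phi,\psi)$, integrating the pointwise bound $\phi_t(\omega)+\psi(\omega(t)) \le \gamma(\omega,t)$ against $\xi$ yields
\[
\int \gamma\, d\xi \ \ge\ \int \phi_t(\omega)\, \xi(d\omega,dt)\ +\ \int \psi\, d\mu,
\]
the second term from $B_\xi = \mu$. The growth bound on $\phi$, combined with $\ol\E[\rho]=V$ and the pathwise Doob inequality \eqref{PathDoob}, makes $(\phi_{\rho\wedge t})_{t\ge 0}$ uniformly integrable; optional stopping then gives $\int\phi_t\,d\xi=\phi_0=0$, which proves the inequality.

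\emph{Strong duality.} The first step is to reduce to bounded $\gamma$: subtracting from $\gamma$ the $S$-continuous martingale $M$ from \eqref{PathDoob} together with a quadratic term $y\mapsto k y^2$ (both of which have \emph{the same} integral against every $\xi \in \RST(\mu)$, namely $0$ and $kV$) produces a functional bounded from below by a constant, and shifts $P_\gamma$ and $D_\gamma$ by the same amount. A further monotone approximation $\gamma_n = \min(\gamma, n) \uparrow \gamma$ reduces to the case of bounded $S$-lower semi-continuous $\gamma$. For such $\gamma$ I rewrite the primal as a transport problem: the push-forward $\tilde\xi := \Phi_*\xi$ under $\Phi(\omega,t)=(\omega,\omega(t),t)$ is a measure on $\Omega\times\R\times\R_+$ with first and second marginals $\W$ and $\mu$, supported on the graph $\{y=\omega(t)\}$, and encoding additionally the pseudo-stopping-time property of Definition~\ref{def:PRST}. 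Applying Corollary~\ref{CorOTduality} to the two marginal constraints produces potentials $\phi^{(0)}\in L^\infty(\W)$ and $\psi\in L^\infty(\mu)$ with $\phi^{(0)}(\omega)+\psi(y)\le\gamma(\omega,t)$ on the graph. Simultaneously dualizing the linear PRST constraints $\int(X-X^M_s)\,d\xi=0$ for $X\in C_b(\Omega)$ promotes $\phi^{(0)}$ to a time-dependent $S$-continuous martingale $\phi$ starting at $0$, via the martingale representation of Proposition~\ref{prop:very cont}, yielding full dual feasibility $\phi_t(\omega)+\psi(\omega(t))\le\gamma(\omega,t)$ of the required form.

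\emph{Main obstacle.} The delicate step is the simultaneous dualization of the two marginal constraints and the infinite family of pseudo-stopping-time constraints, together with the verification that the Lagrange multiplier for the latter can be taken to be an $S$-continuous martingale with the quadratic growth required for the weak-duality computation. Compactness of $\RST(\mu)$ (Theorem~\ref{ComSol}) legitimises the underlying minimax exchange, and Proposition~\ref{prop:very cont} supplies the martingale representation that guarantees $\phi$ is genuinely $S$-continuous. Carrying out the approximation in $\gamma$ rigorously — and in particular checking that the limiting dual pair still satisfies the integrability constraints $|\phi_t|\le a+bt+cB_t^2$ and $|\psi(y)|\le a+by^2$ — will occupy the bulk of the technical work.
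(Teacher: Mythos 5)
Your architecture is the paper's: weak duality by optional stopping of the $S$-continuous martingale $\phi$; strong duality via the transport reformulation on $C(\R_+)\times\R_+\times\R$ with the cost $c(\omega,t,y)$ equal to $\gamma(\omega,t)$ on the graph $\{y=\omega(t)\}$ and $+\infty$ off it; classical Monge--Kantorovich duality; a min--max penalization of the adaptedness constraints by functions $\sum_i(f_i(\omega)-f_i^M(\omega,t))g_i(y)$; and the conditional-expectation step that promotes the $\Omega$-potential to the $S$-continuous martingale $\phi=(\phi^{(0)})^M$, using predictability of $c$ to kill the penalty term. The reduction to bounded $\gamma$ via the pathwise Doob martingale is also exactly as in the paper.

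The concrete gap is your treatment of the constraint $\E_\xi[T]=V$ built into $\RST(\mu)$. Corollary~\ref{CorOTduality} dualizes only the two marginal constraints; applying it (plus the adaptedness penalization) without dualizing the moment constraint produces the dual of a \emph{larger} primal (all adapted joinings with the correct marginals, minimal or not), whose value can be strictly smaller than $P_\gamma$, and whose potentials satisfy the pointwise inequality with a \emph{bounded} $\phi^{(0)}$ --- a strictly smaller dual class than $D_\gamma$. The paper inserts a Lagrange multiplier $\alpha\ge 0$ for $\E_\pi[T]\le V$ (a second min--max, Proposition~\ref{NA super hedging}), so the non-adapted dual inequality reads $\phi(\omega)+\psi(y)-\alpha(t-V)\le c(\omega,t,y)$; the term $-\alpha(t-V)$ is then split as $\alpha(\omega(t)^2-t)$, absorbed into the martingale, plus $-\alpha\omega(t)^2+\alpha V$, absorbed into $\psi$ (see \eqref{ToMart}). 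This is precisely where the quadratic growth bounds $|\phi_t|\le a+bt+cB_t^2$ and $|\psi(y)|\le a+by^2$ in the statement come from: they are not an integrability condition to verify a posteriori but the enlargement of the dual class needed to make $D_\gamma$ reach $P_\gamma$. Relatedly, the compactness legitimising the min--max is that of the non-adapted set $\TM^V(\W,\mu)$ (which itself rests on the moment bound), not of $\RST(\mu)$, and the application of Corollary~\ref{CorOTduality} needs a further reduction to costs supported in a bounded time interval $[0,t_0]$, again enabled by the moment bound. Once the multiplier $\alpha$ and the $\omega(t)^2-t$ absorption are added, your plan coincides with the paper's proof.
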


Using the same argument as in the proof of Theorem \ref{MinimizerExistsP}, we see that it suffices to establish Theorem \ref{HedgingDual} in the case where $\gamma$ is bounded from below.
As usual, one part of the duality relation is straightforward to verify: for $(\phi,\psi)$ satisfying the dual constraint and $\xi\in\RST(\mu)$  we have 
\begin{align*}\hspace{1cm}\textstyle
 \int \psi(y)\ \mu(dy) = \int \psi(\omega(t))\ \xi(d\omega,dt) + \int \phi_t(\omega)\ \xi(d\omega,dt) \leq \int \gamma\circ r(\omega,t) \ \xi(d\omega,dt),\end{align*} hence $D_\gamma\leq P_\gamma$.

We will establish  Theorem \ref{HedgingDual} as a consequence of the following auxiliary   duality result, where
 we write  
 $T$ for the projection map $\CRo\times\R_+\times\R \to \R_+$, $T(\omega,t,y)=t$.

\begin{proposition}\label{super hedging}
  Let $c:\CRo\times \R_+\times \R\to \R \cup \{\infty\}$ be lsc, predictable (cf.\ Remark \ref{ProdPred}) and
  bounded from below. Write $V = \int x^2 \, \mu(dx)$. Then
\begin{align}\label{NewDuality}\tag{$\star$}
 \inf_{\pi  } {\textstyle \int c(\omega,t,y)} \, d\pi(\omega, t,y)=\sup_{(\phi,\psi)} {\textstyle \int \phi \, d\W+\int \psi\, d\mu} .
\end{align}
where the infimum is taken over the set 
\begin{align*}\TRST^{1,V} (\mu):=\{\pi\in \TRST^1(\mu): \pi(T)\leq V\}
\end{align*}
 and the supremum is taken over  $\phi\in C_b(\CRo)$, $\psi\in C_b(\R)$ such that
\begin{align}\label{d^M}\tag{$d^M[c,V]$}
\exists{\alpha\geq 0} \mbox{ s.t. } \phi^M_t(\omega)\!+\! \psi (y)\!-\!\alpha(t\!-\!V) \leq c(\omega, t, y) \mbox{ for } \omega \in \CRo, t\in\R_+, y\in \R.
\end{align}
\end{proposition}
Proposition \ref{super hedging} should be compared to the (formally) very similar classical 
 duality theorem of optimal transport, see e.g.\ \cite[Section~5]{Vi09} for a proof as well as for a discussion of its origin and related literature. 

\begin{theorem}[Monge-Kantorovich Duality]\label{OTDuality}
 Let $(\Xs_i,\mu_i),$ $ i=1,2$ be Polish probability spaces and $c:\Xs_1\times \Xs_2\to \R \cup \{\infty\}$ a  lsc  and bounded from below \emph{cost function}. Then
\begin{align}
 \inf_{\pi  } {\textstyle \int c(x_1,x_2)} \, d\pi(x_1,x_2)=\sup_{(\phi,\psi)} {\textstyle\left( \int \phi \, d\mu_1+\int \psi\, d\mu_2\right)}, 
\end{align}
where the $\inf$ is taken over probabilities $\pi$ on $\Xs_1\times \Xs_2$ satisfying $\proj_{\Xs_1}(\pi)=\mu_1, \proj_{\Xs_2}(\pi)=\mu_2$ 
 and the $\sup$ is taken over  $\phi\in C_b(\Xs_1)$, $\psi\in C_b({\Xs_2})$ satisfying for $x_1\in {\Xs_1}, x_2\in {\Xs_2}$
\begin{align*}
\phi(x_1)+\psi(x_2) \leq c(x_1,x_2).\end{align*}
\end{theorem}

The strategy of the proof of Proposition \ref{super hedging} is to
establish the duality relation \eqref{NewDuality} for $\pi$, resp.\
$(\phi, \psi) $ taken from certain larger candidate sets, in which
case the duality relation follows from Theorem \ref{OTDuality}. Then
we introduce additional constraints via a variational approach to obtain an improved duality through the following min-max theorem.

\begin{theorem}[{see e.g.\ \cite[Thm.\ 45.8]{St85}  or \cite[Thm.\ 2.4.1]{AdHe96}}]\label{minmax} 
 Let $K, L$ be convex subsets of vector spaces $H_1$ resp. $H_2$, where $H_1$ is locally convex and let $F:K\times L\to\R$ be given. If
\begin{enumerate}
 \item K is compact,
\item $F(\cdot, y)$ is continuous and convex on $K$ for every $y\in L$,
\item $F(x,\cdot)$ is concave on $L$ for every $x\in K$
\end{enumerate}
then
$$ \sup_{y\in L}\inf_{x\in K} F(x,y)=\inf_{x\in K}\sup_{y\in L} F(x,y).$$
\end{theorem}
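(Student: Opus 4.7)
The plan is to establish Sion's min-max theorem by combining a compactness argument with a reduction to the finite-dimensional case. The easy inequality $\sup_{y \in L} \inf_{x \in K} F(x,y) \leq \inf_{x \in K} \sup_{y \in L} F(x,y)$ is immediate and requires none of the hypotheses, so the work is in the reverse inequality. I would argue by contradiction: fix $c \in \R$ with $\sup_y \inf_x F(x,y) < c < \inf_x \sup_y F(x,y)$ and derive a contradiction.

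First I would exploit the lower bound $\inf_x \sup_y F(x,y) > c$ together with compactness of $K$. For each $x \in K$, there exists $y_x \in L$ with $F(x, y_x) > c$, which means that the family $V_y := \{x \in K : F(x,y) > c\}$, $y \in L$, covers $K$. Continuity of $F(\cdot, y)$ guarantees each $V_y$ is open. By compactness of $K$, finitely many $y_1, \dots, y_n \in L$ suffice, so $K \subseteq V_{y_1} \cup \cdots \cup V_{y_n}$, or equivalently $\max_i F(x, y_i) > c$ for every $x \in K$. Continuity of each $F(\cdot, y_i)$ makes $x \mapsto \max_i F(x, y_i)$ continuous on the compact set $K$, so this maximum attains its infimum and $\inf_{x \in K} \max_i F(x, y_i) > c$.

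Next I would reduce to a finite-dimensional min-max on $K \times \Delta^n$ by defining $G(x, t) = \sum_{i=1}^n t_i F(x, y_i)$ for $t = (t_1, \dots, t_n) \in \Delta^n$. The map $G(\cdot, t)$ is convex (nonnegative combination of convex functions) and continuous on $K$, while $G(x, \cdot)$ is linear, hence concave. Applying the classical finite-dimensional min-max theorem (Von Neumann) to $G$ on $K \times \Delta^n$ gives
$$\sup_{t \in \Delta^n} \inf_{x \in K} G(x,t) \;=\; \inf_{x \in K} \sup_{t \in \Delta^n} G(x,t) \;=\; \inf_{x \in K} \max_i F(x, y_i) \;>\; c.$$
Picking $t^\ast \in \Delta^n$ with $\inf_x G(x, t^\ast) > c$ and setting $\bar y = \sum t_i^\ast y_i \in L$ (using convexity of $L$), concavity of $F(x, \cdot)$ yields $F(x, \bar y) \geq \sum t_i^\ast F(x, y_i) > c$ for all $x \in K$. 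Hence $\sup_y \inf_x F(x,y) \geq \inf_x F(x, \bar y) \geq c$, contradicting the choice of $c$.

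The main obstacle is the invocation of the finite-dimensional min-max theorem, which is itself non-trivial and classically proved via a Hahn-Banach separation argument applied to the epigraph of $x \mapsto G(x, \cdot)$ in $\R^n$ (or alternatively via Brouwer's fixed point theorem). Given that the finite-dimensional case is standard, the remaining ingredients — the open covering argument, the continuity of $\max_i F(\cdot, y_i)$, and the passage from the convex combination $t^\ast$ back to $\bar y \in L$ via concavity — are straightforward, so this is the right place to invest care. Everything else in the proof is a bookkeeping application of the hypotheses on continuity and convexity/concavity, together with the compactness of $K$ to secure the finite subcover.
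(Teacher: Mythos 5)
Your argument is a correct, standard proof of the Kneser--Fan type minimax theorem, but note that the paper does not prove this statement at all: Theorem \ref{minmax} is quoted as a known result with references to Strasser and to Adams--Hedberg, and is used as a black box in the duality proofs. So there is nothing in the paper to compare against step by step; what you have written is essentially the classical argument one finds in those references. The structure is sound: the trivial inequality, the open cover $V_y=\{x: F(x,y)>c\}$ with a finite subcover $y_1,\dots,y_n$ by compactness, the reduction to the simplex $\Delta^n$, and the return to $L$ via concavity, $F(x,\bar y)\ge\sum t_i^* F(x,y_i)$. Two small points of care. First, your phrase ``the classical finite-dimensional min-max theorem (Von Neumann) applied to $G$ on $K\times\Delta^n$'' is loose, since $K$ is still an arbitrary compact convex set in a locally convex space; what you actually need (and correctly identify in your closing paragraph) is the separation argument in $\R^n$: the set $C=\{z\in\R^n:\exists x\in K,\ F(x,y_i)<z_i\ \forall i\}$ is open and convex by convexity of each $F(\cdot,y_i)$, does not contain $(c,\dots,c)$, and separating yields $t^*\in\Delta^n$ with $\sum t_i^*F(x,y_i)\ge c$ for all $x$. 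Second, that separation only gives the non-strict inequality $\inf_x G(x,t^*)\ge c$ rather than the strict one you claim; this is harmless, since $\sup_y\inf_x F\ge\inf_x F(\cdot,\bar y)\ge c$ already contradicts $\sup_y\inf_x F<c$. Also observe that local convexity of $H_1$ plays no role in this proof; it is an unused hypothesis here, which is not an error.
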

\begin{proof}[Proof of Proposition \ref{super hedging}.]
Fix $t_0> 0$ and consider for a probability $\pi$ on $\CRo\times \R_+\times\R$ and $(\phi, \psi)\in C_b(\CRo)\times C_b(\R)$ the conditions 
\begin{align}\label{p,t_0}\tag{$p[t_0]$}
\supp\pi \subseteq \CRo \times [0, t_0] \times \R, \proj_\CRo(\pi)=\W, \proj_\R(\pi)=\mu
\\
\label{d,t_0}\tag{$d[c,t_0]$}
\phi(\omega)+ \psi (y) \leq c(\omega, t, y), \mbox{ for } \omega \in \CRo, t\leq t_0, y\in \R.
\end{align}
Using compactness of $[0,t_0]$ it is not hard to see that $\tilde c(\omega,y)=\inf_{t\leq t_0} c(\omega, t, y)$ is continuous. We may thus apply the Monge-Kantorovich duality (Theorem \ref{OTDuality})  to the cost  $\tilde c$ and obtain:

{\bf Claim 1.} Taking the $\inf$ over $\pi$ satisfying \eqref{p,t_0} and the $\sup$ over $(\phi, \psi)$ satisfying \eqref{d,t_0}, the duality relation \eqref{NewDuality} holds for continuous bounded $c: \CRo \times \R_+\times \R\to \R$.

 \medskip
 
 Next consider the constraints 
\begin{align}\label{p,t_0, V}\tag{$p[t_0, V]$}
\supp\pi \subseteq \CRo \times [0, t_0] \times \R, \proj_\CRo(\pi)=\W, \proj_\R(\pi)=\mu, \pi(T)\leq V \\
\label{d,t_0, V}\tag{$d[c,t_0, V]$}
\exists{\alpha\geq 0} \mbox{ s.t. } \phi(\omega)+ \psi (y)-\alpha(t-V) \leq c(\omega, t, y) \mbox{ for } \omega \in \CRo, t\leq t_0, y\in \R.
\end{align}
Using the min-max theorem (Theorem~\ref{minmax}) with the function 
$ F(\pi,\alpha)= \textstyle \int  c +\alpha  (T-V) \, d\pi$,
 the set of $\pi$ satisfying \eqref{p,t_0}, and $\alpha\geq 0$  we thus obtain 
\begin{align}
\inf_{\pi \text{ sat. } \eqref{p,t_0, V}}\mbox{$\int$} c\, d\pi & =
 \inf_{\pi \text{ sat. } \eqref{p,t_0}} \Big(\mbox{$\int$} c\, d\pi + \sup_{\alpha \geq 0} \alpha \mbox{$\int$} T-V \, d\pi \Big)   \nonumber \\
\label{withoutQMK} & =\sup_{\alpha \geq 0} \,
 \inf_{\pi\text{ sat. } \eqref{p,t_0}}\mbox{$\int$}  c +\alpha  (T-V) \, d\pi\\
\label{withQMK} & =\sup_{\alpha \geq 0} \,
    \sup_{(\phi,\psi) \text{ sat. } (d[c  +\alpha  (T-V),t_0]) } \W(\phi) + \mu(\psi)\\
&=\sup_{(\phi,\psi)\text{ sat. } (d[c  ,t_0, V])} \W(\phi) + \mu(\psi), \nonumber
\end{align}
where we  applied Claim 1 to the function $\tilde c=c  +\alpha  (T-V)$ to establish the equality between \eqref{withoutQMK} and \eqref{withQMK}. 
Hence we obtain:

{\bf Claim 2.} Taking the $\inf$ over $\pi$ satisfying \eqref{p,t_0, V} and the $\sup$ over $(\phi, \psi)$ satisfying \eqref{d,t_0, V}, the duality relation \eqref{NewDuality} holds for continuous bounded $c: \CRo \times \R_+\times \R\to \R$.

\medskip

In the next step we will drop $t_0$ and consider the constraints 
\begin{align}\label{p, V}\tag{$p[V]$}
 \proj_\CRo(\pi)=\W, \proj_\R(\pi)=\mu, \pi(T)\leq V \\
\label{d, V}\tag{$d[c, V]$}
\exists{\alpha\geq 0} \mbox{ s.t. } \phi(\omega)+ \psi (y)-\alpha(t-V) \leq c(\omega, t, y), \mbox{ for } \omega \in \CRo, t\in \R_+, y\in \R.
\end{align}

{\bf Claim 3.} Taking the $\inf$ over $\pi$ satisfying \eqref{p, V} and the $\sup$ over $(\phi, \psi)$ satisfying \eqref{d, V}, the duality relation \eqref{NewDuality} holds for $c: \CRo \times \R_+\times \R\to \R$ lsc and bounded from below.
\medskip

Given $c\geq 0 $ lsc,  $\supp c \subseteq \CRo\times [0,t_0]\times \R$ for some $t_0$ it is straightforward to verify  \begin{align*}
\inf_{\pi \text{ sat. } \eqref{p,t_0, V}} {\textstyle\int} c\ d\pi&\ =\inf_{\pi \text{ sat. } \eqref{p, V}} {\textstyle\int} c\ d\pi,\\
 \sup_{(\phi,\psi)\text{ sat. } \eqref{d,t_0, V}} \W(\phi) + \mu(\psi)&\ =
 \sup_{(\phi,\psi)\text{ sat. } \eqref{d, V}} \W(\phi) + \mu(\psi).
\end{align*}
Such functions can be used to approximate any   non-negative lsc function on $ \CRo\times \R_+\times \R$ from below. Using that the set of $\pi$ satisfying \eqref{p, V} is compact, a straightforward approximation argument (see e.g.\ \cite[Proof of Theorem 5.10, Step 5]{Vi09} for details) yields Claim 3. 

\medskip

Recalling \eqref{CheckMeasurability}, 
$\pi \in \TRST^{1,V}(\mu)$ if and only if 
\comment{DC (26) The function $g$ is necessary here. We have included some integration variables, hopefully this makes this part more transparent. }
\begin{align}\label{p^M}\tag{$p^M[V]$}\begin{split}  \proj_\CRo(\pi)=\W, \proj_\R(\pi)=\mu, \pi(T)\leq V, \quad \mbox{and } 
\\
\textstyle\int f(s) (g-\E[g|\F_t^0])(\omega)k(y) \, \pi(d\omega, ds, dy)=0\quad \quad\quad  \\
\mbox{for $f\in C_b(\R_+), \supp f\subseteq [0,t]$, $t\geq 0$,
  $g\in C_b(\CRo)$, $k\in C_b(\R)$}; \end{split} \end{align} here, 
$k$ enforces the condition that
$\pr_{ {\CRo}\times \R_+}(\pi_{\llcorner {\CRo}\times \R_+ \times D})
\in \RST$ for all Borel
sets $D$. We will apply the min-max theorem to
$ F(\pi, h )= \textstyle \int \cost + h\, d\pi, $ where $\pi$
satisfies \eqref{p, V} and
\begin{align}\label{TestH}   h(\omega,s,y)= \textstyle \sum_ {i=1}^n f_i(s) (g_i-\E[g_i|\F_{t_i}^0])(\omega)k_i(y),\end{align} 
 $n\in \N$, $f_i\in C_b(\R_+), \supp f_i\subseteq [0,t_i]$, $t_i\geq 0$,  $g_i\in C_b(\CRo)$, $k_i\in C_b(\R)$.

The set of $\pi$ satisfying   \eqref{p, V} is convex and compact by Prokhorov's theorem and the set of all $h$ of the form \eqref{TestH} is a vector space as well. 
Hence we obtain for $c$ continuous and bounded 
\begin{align}\label{QuiteDuality}\begin{split}
 \inf_{\pi\text{ sat. } \eqref{p^M}} \mbox{$\int$} \cost\, d\pi
& = \inf_{\pi\text{ sat. } \eqref{p, V}} \sup_{ h} \mbox{$\int$} \cost  +  h\ d\pi \\
&\hspace{-2.56mm}\stackrel{\text{Thm.~\ref{minmax}}}{=} \sup_{h}\inf_{\pi\text{ sat. } \eqref{p, V}}  \int \cost +  h \ d\pi \\
&= \sup_{ h} \sup_{(\phi,\psi)\text{ sat. } (d[c + h,V])} \W(\phi) + \mu(\psi),\end{split}
\end{align}
where the last equality holds by Claim 3. 
Assume now that $\cost$ is also predictable.
For $(\phi,\psi)$ satisfying $(d[\cost+ h,V])$ there is some $\alpha\geq 0$ such that 
\begin{align}\label{Manu}  \phi(\omega) + \psi(y) - \alpha(t-V)\leq(\cost+ h)(\omega,t,y).\end{align}
Fixing $t$ and $y$, \eqref{Manu} can be read as an inequality between functions in $\omega$. 
Taking conditional expectations wrt $\F^0_t$ in the sense of Definition \ref{EAverage} we  obtain 
$$   \phi^M_t(\omega) + \psi(y) - \alpha(t-V) \leq \cost(\omega,t,y) $$
for all $\omega\in\CRo,t\in\R_+, y\in\R$, where we have used that   $\cost$ is predictable and that $\E [f(t) (g-\E[g|\F_{u}^0])|\F^0_t]=0$ whenever $\supp f \subseteq [0,u]$.

It follows that  $(\phi,\psi)$ satisfy \eqref{d^M}. Thus \eqref{QuiteDuality} yields the non-trivial part of \eqref{NewDuality} for the constraints \eqref{p^M}, \eqref{d^M} in the case of continuous bounded $c$.
As above, the extension to lsc $c$ is straightforward.
\end{proof}

\begin{proof}[Proof of Theorem \ref{HedgingDual}]

Consider the space $\CRo \times \R_+ \times \R$ and the cost function
\begin{align}\label{eq:c}
 c(\omega,t,y):=\begin{cases}
                  \gamma\circ r(\omega,t) & \text{ if } \omega(t)=y\\
		  \infty & \text{ otherwise}
                 \end{cases}. 
\end{align}
It is straightforward to see that $\cost $ is lsc since $\gamma$ was assumed to be lsc. Hence \eqref{NewDuality} holds by Proposition  \ref{super hedging}. 
It remains to show that 
\begin{align}\label{ExplicitIneq}\sup_{(\phi, \psi)\text{ sat. }
  \eqref{d^M}} \W(\phi)+\mu(\psi) \leq D_\gamma \text{ and }  P_\gamma
  \leq \inf_{\pi \in  \TRST^{1,V}( \mu)} \pi(c).\end{align} 
To prove the first inequality, consider a bounded pair $(\phi,\psi)$ satisfying \eqref{d^M}, i.e.\  there is $\alpha\geq 0$ such that $ \phi^M_t(\omega)  +\psi(y) - \alpha(t-V)\leq c(\omega,t,y) $ for all $\omega\in\CRo, y\in \R, t\in \R_+$. But then
$$ \phi^M_t(\omega)  +\psi(\omega(t)) - \alpha(t-V)\leq \gamma\circ r(\omega,t) ,$$
which we rewrite as
\begin{align}\label{ToMart} \big[\underbrace{\phi^M_t(\omega)-\W(\phi)+ \alpha(\omega(t)^2-t)}_{=:\bar \phi_t(\omega)}\big] + \big[\underbrace{\psi(\omega(t))+\W(\phi)-\alpha \omega(t)^2 +\alpha V}_{=:\bar \psi(\omega(t))} \big]\leq \gamma\circ r(\omega,t).\end{align}
Noting that $\alpha(\omega(t)^2-t)$ is an $S$\!-continuous martingale starting in $0$, we find that $(\bar\phi, \bar\psi)$
satisfies the constraint of the dual problem considered in Theorem \ref{HedgingDual}. 
 Since $V=\int y^2\, \mu(dy)$ we have $\int \bar\psi(y)\ \mu(dy)=\int \psi(y)\ \mu(dy)+\W(\phi)$, establishing the first part of \eqref{ExplicitIneq}.

 To prove the latter inequality, note that each $\pi\in \TRST^{1,V}( \mu)$ satisfying $\int c\, d\pi<\infty$ is concentrated on $\{(\omega,t, y): \omega(t)=y\}$ and writing $p(\omega, t, y):=(\omega,t)$ we find $\xi:= p(\pi)\in \RST(\mu)$, $\int c\, d\pi= \int \gamma\, d\xi$.
\end{proof}

\subsection{General starting distribution}\label{S:general start}
In this section we consider $\CRR$, the set of all continuous functions on $\R_+$, and
$$ S_\R =\ \{(f,s): f:[0,s]\to \R \text{ is continuous, } f(0)\in\R\}.$$
Let $\lambda$ be a probability measure on $\R$ prior to $\mu$ in convex order --- i.e., $\int F(x) \ \lambda(dx) \le \int F(x) \ \mu(dx)$ for any convex function $F$. In particular $\lambda$ is centered  and $V_\lambda=\int x^2\ \lambda(dx)\leq V < \infty$. This ensures the existence of solutions to the Skorokhod embedding problem with general starting distribution $\lambda$ with finite first moment. Denote by $\W_x$ the law of Brownian motion starting in $x$ and put $\W_\lambda(d\omega)=\int \W_x(d\omega)\lambda(dx)$ for $\omega\in\CRR$, the law of Brownian motion starting at a random point according to the distribution $\lambda$. Given a function $\gamma: S_\R\to\R$ we are interested in the minimization problem
\begin{equation}\textstyle
 P_{\gamma} =\ \inf\big\{ \int \gamma\circ r(\omega,t)\, \xi(d\omega,dt): \xi\in\RST(\lambda,\mu)\big\},
\end{equation}
where $\RST(\lambda,\mu)$ is the set of all randomized stopping times $\xi$ on $(\CRR,\W_\lambda)$ embedding $\mu$ and satisfying ${\xi}(T)=V-V_\lambda$; in particular $\proj_{\CRR}(\xi)=\W_\lambda$ and $h(\xi)=\mu$ for the map $h:\CRR\times \R_+ \to \R, (\omega,t)\mapsto \omega(t).$ We then have the following result:

\begin{theorem}\label{HedgingDual_general start} Let $\gamma:  S_\R \to \R$ be  lsc and  bounded from below as in \eqref{MartinBound}. Put
$$D_\gamma=\sup\left\{\int \psi(y)\, d\mu(y): \psi \in C(\R), 
\begin{array}{l} \exists \phi, 
\phi\mbox{ is a $S_\R$-continuous martingale},  \\
{\W_\lambda}[\phi_0] = 0,  \phi_t(\omega) + \psi(\omega(t)) \leq \gamma\circ r(\omega,t)
 \end{array}
 \right\}
 $$ where $\phi, \psi$ 
satisfy $|\phi_t| \leq a + bt+c B_t^2 $, $|\psi(y)| \leq a+ b y^2$ for some $a,b,c>0$. 
     Then we have the duality relation 
     $P_\gamma = D_\gamma.$
\end{theorem} 
The proof goes along the same lines as the proof of Theorem \ref{HedgingDual}. The inequality $D_\gamma(\lambda,\mu)\leq P_\gamma(\lambda,\mu)$ is straightforward. For the other direction we can use the same argument as before, replacing $\W$ by $\W_\lambda$ and $V$ by $\tilde V:=V-V_\lambda$. Up to equation \eqref{ToMart} everything can be copied verbatim. Then we rewrite $ \phi^M_t(\omega)+\psi(\omega(t)) - \alpha(t-V+V_\lambda)$ as
\begin{align*}
 [\phi^M_t(\omega) -\W_\lambda(\phi) + \alpha(\omega(t)^2 - t - V_\lambda)] + [\psi(\omega(t))+\W_\lambda(\phi) - \alpha(\omega(t)^2 - V)]
\end{align*}
and note that ${\W_\lambda}(\omega(t)^2)=t+V_\lambda.$ The proof  concludes as before.

\section{The monotonicity principle}\label{s:mp}

In this section we will establish the monotonicity principle: suppose $\xi \in \RST(\mu)$ is an optimal stopping rule for some function $\gamma$, then we will find a set $\Gamma$ supporting $\xi$ such that $\BP\cap(\Gamma^<\times \Gamma)=\emptyset$. The argument can be divided into two major steps:
\begin{enumerate}
\item Consider an optimal stopping rule $\xi$ and a stop-go pair $((f,s),(g,t))\in\SG$ where $(f,s)$ is still going according to the stopping rule $\xi$ while $(g,t)$ is stopped by $\xi$. Intuitively speaking, we can find an (infinitesimal) improvement of $\xi$ by switching the roles of $f$ and $g$.  As $\xi$ is optimal, there should only exist a few such pairs.  We formalize this in Proposition \ref{Invisible2} by showing that if $\pi(\SG)>0$ for some $\pi\in\TRT(r(\xi))$ we can explicitly construct a stopping rule with strictly lower `cost'.  

\item Knowing that $\SG$ is negligible in the sense that it is not seen by the `couplings' $\pi$ just described, it remains to find a support $\Gamma$ of $\xi$ such that $\SG\cap \left(\Gamma^< \times \Gamma\right)=\emptyset.$ The crucial step is the characterization of a set which is null wrt all $\pi\in\TRT(r(\xi))$ which we establish in Proposition~\ref{KeLe} based on Choquet's capacitability theorem and an auxiliary duality result.
\end{enumerate}

Armed with Propositions~\ref{Invisible2} and \ref{KeLe}, we will establish Theorem \ref{GlobalLocal}: If $\xi$ is an optimal stopping time, then Proposition \ref{Invisible2} implies that a certain \emph{set of pairs of paths}, i.e.\ the set of stop-go pairs is negligible in a quasi-sure sense, i.e.\ almost surely null with respect to all $\pi\in\TRT(r(\xi))$. Proposition \ref{KeLe} will then allow us to exclude a \emph{$r(\xi)$-null set of paths} to obtain a support $\Gamma$ of $r(\xi)$ such that $\Gamma^<\times \Gamma$ avoids all stop-go pairs.

\medskip

In the first part of this section we will give a number of definitions and results that are needed to establish Theorem \ref{GlobalLocal} (including the statements of Propositions~\ref{Invisible2} and \ref{KeLe}); the respective proofs will be given subsequently.

The notion of stop-go pairs introduced in Definition \ref{def:SG} requires that \emph{all} possible extensions $\sigma$ are considered. However, to establish the monotonicity principle, it is actually more natural to prove a stronger result that appeals to a relaxed notion of stop-go pairs which are sensitive to the stopping measure $\xi$, or -- more precisely -- to a representation of $\xi$ through a function
$A^\xi$ as in  Theorem \ref{thm:equiv RST} (1). 
\begin{importantconvention}
Throughout this section we will fix $\xi \in \RST(\mu)$, as well as the particular representation $A^\xi$.
\end{importantconvention}

\begin{definition}\label{def:CRST} For $(f,s)\in S$, the conditional randomized stopping time $\xi^{(f,s)}$ is given as \begin{align}\label{CondStop} \xi^{(f,s)}_\omega([0,t]):=
\begin{cases}
\frac{1}{1-A^\xi(f,s)}\left(A^\xi(f\oplus \omega_{\llcorner [0,t]},s+t)-A^\xi(f,s)\right) & \text{if }A^\xi(f,s)<1\\
1&\text{otherwise}
\end{cases}.\end{align}
\end{definition}
The measure $\xi^{(f,s)}$ is the normalized stopping measure given that we followed the path $f$ up to time $s$. In other words this is the normalized stopping measure of the `bush' which follows the `stub' $(f,s)$. We note that $\xi^{(f,s)}$ depends measurably on $(f,s)\in S$.
    
Informally, the following lemma asserts that if $\xi$ is a well-behaved stopping time, then the same holds for $\xi^{(f,s)}$ for typical $(f,s)\in S$.  More precisely, we say that $V\subseteq S$ is \emph{evanescent} if $r^{-1}(V)$ is an evanescent subset of $\CRo\times \R_+$. Equivalently, $V$ is evanescent if there is a Borel set $A\subseteq \CRo, \W(A)=1$ such that $r(A\times \R_+)\cap V=\emptyset$.  Recall that $T$ denotes the projection from ${\CRo}\times \R_+$ onto $ \R_+.$

\begin{lemma}\label{lem:xifs} 
The set $\{(f,s)\in S : \xi^{(f,s)}\notin\RST^1\}$ is evanescent. Moreover, if $F:\CRo\times \R_+\to \R_+$ is predictable and satisfies $\xi(F)<\infty$ then the set $\{(f,s)\in S : \xi^{(f,s)}(F^{(f,s)\oplus}) =\infty\}$ is evanescent.  In particular, $\{(f,s)\in S:\xi^{(f,s)}(T) =\infty\}$ is evanescent, since $\xi\in\RST(\mu)$.  
\end{lemma}

\begin{definition}\label{WeakBP}  
  The set $\BP^\xi$ of \emph{stop-go pairs relative to $\xi$} consists of all
 $\big( (f,s), (g,t)\big)\in S\times S$,   $f(s)= g(t)$ \comment{DC (31) Changed as
   indicated.}  such that 
\begin{align}\label{E:WeakBP}
 \int \gamma^{(f,s)\oplus}( r(\omega,u))\, d\xi^{(f,s)}(\omega,u)+ \gamma(g,t) 
> \gamma(f,s) + \int \gamma^{(g,t)\oplus}( r(\omega,u))\,  d\xi^{(f,s)}(\omega,u) .
\end{align}  
We define stop-go pairs \emph{in the wide sense} by  $\SGW^\xi=\SG^\xi \cup \{(f,s)\in S: A^\xi(f,s)=1\}\times S$.
\end{definition}
In analogy to Definition \ref{def:SG} we agree that \eqref{E:WeakBP} holds in any of the following cases: 
\begin{enumerate} 
\item $\int T\, d\xi^{(f,s)}=\infty$ or $\xi^{(f,s)}(\CRo\times \R_+)<1$; 
\item the integral on the left side equals $\infty$; 
\item either of the integrals is not defined.  
\end{enumerate}

We now discuss the relation between the set $\SG$ given in Definition \ref{def:SG} and the set $\SGW^\xi$. Note that if $A^\xi(f,s)<1$ and $((f,s),(g,t))\in \SG$ for some $(g,t) \in S$ then we shall show below that $((f,s),(g,t))\in \SG^\xi$. In contrast to this, whenever $A^\xi(f,s)=1$, the left and right hand sides of \eqref{E:WeakBP} are identical and $((f,s),(g,t))$ cannot be a stop-go pair relative to $\xi$. However, in general $ \SG \cap \{(f,s)\in S: A^\xi(f,s)=1\}\times S$ may be non-empty. For this reason we are also interested in the set of stop-go pairs in the wide sense which satisfy:
\begin{lemma} \label{lem:SGsubSGW}
   Every stop-go pair is  a stop-go pair in the wide sense, i.e.
   \begin{align}\label{CrucialInclusion}
     \SG\subseteq \SGW^\xi.
   \end{align}   
 \end{lemma}
 
 \begin{remark}\label{rem:SGprop}
 Note that $\BP^\xi$ and $\SGW^\xi$ are Borel subsets of $S\times S$ (corresponding to  predictable subsets of $({\CRo}\times\R_+)\times({\CRo}\times\R_+)=({\CRo}\times\R_+)\times \Ys$ in the sense of Remark \ref{ProdPred}). In contrast, $\BP$ is in general just co-analytic.
 \end{remark} 

\begin{definition}\label{def:gamma monotone}
A set $\Gamma\subset S$ is called  \emph{$(\gamma,\xi)$-monotone} iff 
$$ \SGW^\xi\cap (\Gamma^<\times \Gamma)=\emptyset.$$
\end{definition}

Recall that we say that our optimization problem \eqref{IntPri} is well posed if $\textstyle{\int \gamma ~ d\xi}$ exists with values in $(-\infty,\infty]$ for all $\xi\in\RST(\mu)$ and it is finite for one such $\xi$.
Together with Lemma~\ref{lem:SGsubSGW}, the following result implies Theorem~\ref{GlobalLocal} stated in the introduction, and is itself a slightly stronger result.

\begin{theorem}\label{GlobalLocal2}
Assume that $\gamma:S\to \R$ is Borel measurable, the optimization problem \eqref{RealPri} is well posed and that $\xi\in\RST(\mu)$ is an optimizer. 
Then there exists a $(\gamma,\xi)$-monotone Borel set $\Gamma\subseteq S$ which supports $\xii$ in the sense that $r(\xii)(\Gamma)=1$. 
\end{theorem}

The proof of Theorem \ref{GlobalLocal2} relies on Proposition \ref{Invisible2} and Proposition \ref{KeLe} below.  The first result formalizes the heuristic idea that an optimizer cannot be improved on a large set of paths but at most on a small set of exceptional paths.  The second result allows us to entirely exclude such an exceptional set of paths.

Given functions $F:\Xs\to\Xs', G:\Ys\to\Ys'$ we denote the product map by $F\otimes G:\Xs\times \Ys\to\Xs'\times\Ys'.$ Given a probability $\nu$ on a Polish space $\Ys$, we defined the set $\TRST(\nu)$ in Section \ref{s:joinings}.  An element $\pi \in \TRST(\nu)$ is a measure on $({\CRo}\times \R_+) \times \Ys$, and we will commonly consider the push-forward measure $(F \otimes G)(\pi)$. Typically $F$ will be the map $r: {\CRo} \times \R_+ \to S$, and $G$ will be $r$ or the identity.

\begin{proposition}\label{Invisible2} 
  Assume that $\gamma:S\to \R$ is Borel measurable, the optimization problem \eqref{RealPri} is well posed and that $\xii\in\RST(\mu)$ is an optimizer. Then $(r \otimes \id)(\pi)(\BP^\xii)=0$ for any $\pi\in \TRST^1( r(\xii))$. 
\end{proposition}

Below we  apply Proposition \ref{KeLe} to $(\Ys, \nu)=(S, r(\xii))$, but this choice is not relevant for the proof of Proposition \ref{KeLe} and so we state it for an abstract Polish probability space $(\Ys, \nu)$. 

\begin{proposition}\label{KeLe}
  Let $(\Ys, \nu)$ be a Polish probability space and  $E\subseteq S\times \Ys$ a Borel set. Then the following are equivalent:
  \begin{enumerate}
  \item $(r \otimes \id)(\pi)(E)=0$ for all $\pi\in \TRST^1( \nu )$.
  \item $E \subseteq (F \times \Ys)\ \cup\ (S\times N)$ for some evanescent set $F\subset S$ and a $\nu$-null set $N\subseteq \Ys$.
  \end{enumerate}
\end{proposition}
Intuitively speaking, Proposition \ref{KeLe} characterizes when a predictable set $E\subseteq S\times \Ys$ is `negligible'. In this sense it relates to the classical (cross) section theorem, which implies the following characterization of negligible subsets of $S$.

\begin{proposition}\label{OurSection}
Let $E\subset S$ be Borel. Then the following are equivalent:
\begin{enumerate}
\item$r(\alpha)(E)=0$ for all $\alpha\in \RST$. 
\item $E$ is evanescent.
\item[(1')] $\W(((B_s)_{s\leq \tau},\tau)\in E)=0$ for every $\F^0$-stopping time $\tau $.
\end{enumerate}
\end{proposition}
Note that the equivalence of (1) and (2) in Proposition \ref{OurSection} corresponds precisely to Proposition \ref{KeLe} in the case where $\Ys$ consists of a single element.

\begin{proof}[Proof of Theorem \ref{GlobalLocal2}]
  By Proposition \ref{Invisible2}, $(r \otimes \id)(\pi)(\BP^\xii)=0$ for any $\pi\in \TRST^1(r(\xii))$. Applying Proposition \ref{KeLe} with $(\Ys, \nu)=(S, r(\xii))$ we deduce that there exists an evanescent set $\tilde F\subset S$ and a set $N\subseteq S$ such that $r(\xii)(N)=0$, and $$\BP^\xii \subseteq (\tilde F \times S)\ \cup\ (S\times N).$$ Put $F:=\{(g,t)\in S\!:\!\exists(f,s)\in\tilde F,t\geq s, g\equiv f \text{ on }[0,s]\}.$ Then $F$ is evanescent and satisfies $$\BP^\xii \subseteq (F \times S)\ \cup\ (S\times N).$$ Setting $\Gamma_0= S\setminus (F\cup N) $ we have $r(\xii)(\Gamma_0)=1$ as well as $\BP^\xii\cap ( \Gamma_0^<\times \Gamma_0)=\emptyset.$ Next we define $$\Gamma_1:=\Gamma_0 \cap \{(g,t)\in S: A^\xi(g_{\llcorner [0,s]},s) <1 \mbox{ for all } s<t\}.$$ Then $r(\xi)( \Gamma_1)=1$ and $ \Gamma_1^< \cap \{(f,s):A^\xi(f,s)=1\}=\emptyset$ so that $ \SGW^\xi \cap (\Gamma_1^<\times \Gamma_1)=\emptyset$. Finally we take $\Gamma$ to be a Borel subset of $\Gamma_1$ which has full measure.
\end{proof}

It remains to establish the auxiliary results stated above.
\begin{proof}[Proof of Lemma \ref{lem:xifs}]
  Consider
  \begin{align*}U_1&\ =\textstyle\left\{(f,s)\in S: A^\xi(f,s)<1, \int d\xi^{(f,s)}(\omega,t)<1\right\},\\
    U_2&\ =\textstyle \left\{(f,s)\in S: A^\xi(f,s)<1, \xi^{(f,s)}(F^{(f,s)\oplus})=\infty\right\}.
  \end{align*}
  Set $A^\xi(\omega):= \lim_{t\to \infty} A^\xi \circ r (\omega,t)$.
  Then 
  $(f,s)\in U_1 $ is equivalent to $\int A^\xi(f\oplus \omega)\, d\W(\omega)<1$. Given an $\F^0$-stopping time $\tau$, the strong Markov property implies 
  \begin{align*}
    1&\textstyle =\int d\W(\omega)\, A^\xi(\omega)
       \textstyle = \int d\W(\omega)\, \left[\1_{\tau(\omega)=\infty} A^\xi(\omega) + \1_{\tau(\omega)<\infty} \int d\W(\omega')\, A^\xi(\omega_{\llcorner [0, \tau(\omega)]}\oplus \omega')
       \right], 
  \end{align*}
  hence $\W(((B_s)_{s\leq \tau}, \tau)\in U_1)=0$.

  Additionally, setting $\alpha(d\omega,dt)=\delta_{\tau(\omega)}(dt)\W(d\omega)$ we have 
  \begin{align*}
    \textstyle
    \infty> \xi(F) \geq \int_{U_2} dr(\alpha)(f,s)\, (1-A^\xi(f,s)) \int F^{(f,s)\oplus}(\omega,t)\ d\xi^{(f,s)}(\omega,t),
  \end{align*}
  which implies $r(\alpha)(U_2)=0$. Summing up, we get $\W(((B_s)_{s\leq \tau}, \tau)\in U_1\cup U_2)=0$ proving the claim in view of Proposition \ref{OurSection}.
\end{proof}
\begin{proof}[Proof of Lemma \ref{lem:SGsubSGW}]
  Suppose that $A^\xi(f,s)<1$ and $((f,s),(g,t))\not\in \SGW^{\xi}$ for some $(g,t) \in S$ with $g(t) = f(s)$. In particular, \eqref{E:WeakBP} fails for $\xi^{(f,s)}$, and conditions (1)--(3) above all fail. By Theorem~\ref{thm:equiv RST} (4), and using the same argument as seen in the proof of Lemma~\ref{lem:equivOpt}, we can find a $(\F^0_t \otimes \mathcal{B}([0,1]))_{t\ge 0}$-stopping time $\rho$ such that $\oW(\rho>0) > 0$ and for any measurable and bounded or non-negative $X: \CRo \times \R_+ \to \R$, we have $\int X_t(\omega) \, d \xi^{(f,s)}(\omega,t) = \int \Leb(du) \int \W(d\omega) X_{\rho(\omega,u)}(\omega)$. By the conditions below Definition~\ref{WeakBP}, it follows that there exists $u_0 \in [0,1]$ such that 
  \begin{align}\label{E:WeakBP2a}\textstyle
    \int \gamma^{(f,s)\oplus}(B_{\rho(\omega,u_0)})\, d\W(\omega)+ \gamma(g,t) 
    \le \gamma(f,s) + \int \gamma^{(g,t)\oplus}(B_{\rho(\omega,u_0)})\,  d\W(\omega),
  \end{align}  
  and such that $\rho_0: \omega \mapsto \rho(\omega,u_0)$ is an $(\F^0_t)_{t \ge 0}$-stopping time with $0 < \W(\rho_0) < \infty$, both sides of \eqref{E:WeakBP2a} are well defined, and the left hand side is finite. 
  In particular, writing $B^\Omega$ for Brownian motion on the abstract probability space $\Omega$, $\sigma = \rho_0\circ B^\Omega$ defines an $\F^B$-stopping time, and \eqref{BPIneqProb-intro} fails for this stopping time. Hence $((f,s),(g,t))\not\in \SG$. 
\end{proof}

\subsection{Proof of Proposition \ref{Invisible2}} 
Working towards a contradiction we assume that there is $\pi\in \TRST( r(\xii))$ such that $(r \otimes \id)(\pi)(\BP^\xii)>0$. Observe that $\pi\in\TRST(r(\xi))$ implies that $\pi_{\llcorner (r\otimes \id)^{-1}(E)}\in\TRST(r(\xi))$ for any $E\subset S\times S$. Hence, considering $(r \otimes \id)(\pi)_{\llcorner  \BP^\xii}$, we can also assume that $(r \otimes \id)(\pi)$ is concentrated on $\BP^\xii$ and then  $r(\proj_\Xs(\pi))(\{(f,s):A^\xii(f,s)=1\})=0$, where $\Xs:= \CRo\times \R_+$. Finally we also consider the representation of $\pi$ on $(\CRo\times \R_+)\times (\CRo\times \R_+)$  defined through
\begin{align}
  \label{eq:barpi}
  \textstyle \bar\pi (C\times D):= \int d\pi((\omega,s),(g,t)) \int d\W(\eta)\,  \1_{C\times D } ((\omega,s), ((g,t)\oplus \eta))
\end{align}
and note that $\pi=(\id\otimes r)(\bar \pi). $

We will use $\pi$ and $\bar\pi$ to define modifications $\xii_0^\pi\in\RST$ and $\xii_1^\pi\in\RST$ of $\xi$ such that the following hold true: 
\begin{enumerate}
\item The terminal distributions $\mu_0, \mu_1$ corresponding to $\xii_0^\pi$ and $\xii_1^\pi$ satisfy $(\mu_0+\mu_1)/2= \mu.$ \item $\xii_0^\pi$ stops paths earlier than $\xii$ while $\xii_1^\pi$ stops later than $\xii$. \item The cost of $\xii_0^\pi$ plus the cost of $\xii_1^\pi$ is less than twice the cost of $\xii$, i.e.\
$$ \textstyle \int \gamma\circ r(\omega,t) \, d\xii^\pi_0(\omega, t)+ \int \gamma\circ r(\omega,t) \, d\xii^\pi_1(\omega, t) < 2 \int \gamma\circ r(\omega,t) \, d\xii(\omega, t).$$
\end{enumerate}
More formally, (2) asserts that for almost all $\omega$, and every $s\geq0$
\begin{align*}
(\xii_0^\pi)_\omega([0,s])\geq \xii_\omega([0,s]) \quad
\mbox{ and } \quad  (\xii_1^\pi)_\omega([0,s])\leq \xii_\omega([0,s]), 
\end{align*}
where $(\xii_\omega)_{\omega\in{\CRo}}$ is the disintegration of $\xii$ wrt $\W$ induced by $A^{\xi}$ and
$((\xii_0^\pi)_\omega)_{\omega\in{\CRo}},$ $ ((\xii_1^\pi)_\omega)_{\omega\in{\CRo}}$ are disintegrations of $\xii_0^\pi, \xii_1^\pi$  wrt $\W$. 

\medskip

If we are able to construct such a pair $\xii_0^\pi, \xii_1^\pi$, then $\xii^\pi:=(\xii_0^\pi+\xii_1^\pi)/2 \in\RST(\mu)$  is strictly better than $\xii$ and therefore yields the desired contradiction.

In the proof we will often use the following `strong Markov property' of randomized stopping times: for  $\alpha\in\RST$ and  bounded measurable $F:\CRo\times\R_+\to\R$ we have
\begin{align}\label{eq:strong Markov}
 \textstyle \iint F^{(g,t)\oplus}( \omega)~ d\W(\omega) ~dr(\alpha)(g,t) = \int F(\omega,t)~d\alpha(\omega,t)~.
\end{align}
To define $\xii_0^\pi$, let $\alpha_0= \pr_\Xs(\pi)\in\RST$ and consider $A^{\alpha_0}:S\to [0,1]$ as in Theorem \ref{thm:equiv RST} (1).
We define the randomized stopping time $\xii_0^\pi$ via the product
$$ (1-A^{\xii_0^\pi}) (f,s):= (1- A^{\alpha_0})(f,s) \cdot (1-A^{\xii})(f,s).$$
The probabilistic interpretation of this definition is that a particle is stopped by $\xii_0^\pi$ if it is stopped by $\alpha_0$ or stopped by $\xii$, where these events are taken to be conditionally independent given the particle followed the path $f$ until time $s$. Comparing $\xii$ and $\xii_0^\pi$ the latter will stop some particles earlier than the first one. Also, $\xii^\pi_0\in\RST$ by Theorem \ref{thm:equiv RST} (1). By partial integration, if $D\subset {\CRo}\times \R_+$ then $\xi^\pi_0$ satisfies 
\begin{align*}\textstyle
\xii^\pi_0(D)\ =\int_D (1- A^{\xii}\circ r(\omega,t))\ d\alpha_0(\omega,t) + \int_D (1- (A^{\alpha_0}\circ r)_-(\omega,t))\ d\xii(\omega,t),
\end{align*} 
where $(A^{\alpha_0}\circ r)_-$ denotes the left continuous version of $A^{\alpha_0}\circ r$.

Our next goal is to derive (in \eqref{eq:nu 0} below) a representation for the difference between $\xii_0^\pi$ and $\xii$. For Borel $D\subset {\CRo}\times \R_+$ we have 
\begin{align}\label{Diff01}\textstyle
 \xii^\pi_0(D)-\xii(D) = \int_D (1- A^\xii\circ r(\omega,t))\ d\alpha_0(\omega,t) - \int_D (\alpha_0)_\omega([0,t))\ d\xii(\omega,t).
\end{align}
Furthermore, writing $D_\omega= \{ t\in \R_+: (\omega,t)\in D\}$ and $\theta_s(\omega)= (\omega_{t+s}-\omega_s)_{t\geq 0}$, we have
 \begin{align}\nonumber
\textstyle\int_D (\alpha_0)_\omega([0,t))\, d\xii(\omega,t) 
=&\textstyle \int_{\CRo} d\W(\omega)\int_{\R_+} d\xii_\omega(t) \int_{\R_+}\, d(\alpha_0)_{\omega}(s)\, \1_D(\omega,t) \1_{[0,t)}(s)
 \\
 =&\textstyle \int_{\CRo} d\W(\omega) \int_{\R_+}\, d(\alpha_0)_\omega(s) \int_{\R_+} d\xii_\omega(t) \, \1_{D_\omega}(t) \1_{(s,\infty)}(t) \nonumber
 \\
 =&\textstyle \int_{\CRo\times \R_+}  d\alpha_0(\omega,s) \,   (1\!-\!A^\xii\!\circ\! r(\omega,s)) \xii^{r(\omega,s)}_{\theta_s(\omega)}(D_\omega-s). \label{Diff02}
 \end{align}
Combining \eqref{Diff01} and \eqref{Diff02} we obtain for bounded  measurable $F:S\to \R$ using \eqref{eq:strong Markov} 
\begin{align}\label{eq:nu 0}  
\begin{split}
&\textstyle \int F \circ r\, d(\xii^\pi_0 -\xii) =\\
& \textstyle\int d\alpha_0(\omega,s)\,   (1\!-\!A^\xii\!\circ\! r(\omega,s)) \left[F\circ r(\omega,s) - \int F^{r(\omega,s)\oplus}(r(\tilde\omega,u))\, d\xii^{r(\omega,s)}(\tilde\omega,u)\right]=\\
& \textstyle\int d\bar\pi((\omega,s),(\eta,t))\,   (1\!-\!A^\xii\!\circ\! r(\omega,s)) \left[F\circ r(\omega,s) - \int F^{r(\omega,s)\oplus}(r(\tilde\omega,u))\, d\xii^{r(\omega,s)}(\tilde\omega,u)\right].
\end{split}
\end{align}
 Let us now turn to the definition of $\xii_1^\pi$. 
For $D\subset {\CRo}\times \R_+$ we define 
\begin{align}\label{Diff10}
\textstyle\alpha_1(D)\ =\ \int_{\Xs\times D} (1-A^\xii \circ r(\omega,s)) \ d\bar\pi((\omega,s),(\eta,t))
\end{align}
and observe that $\alpha_1\in\RST$ by Theorem \ref{thm:equiv RST} (2) since $\eta\mapsto (\alpha_1)_\eta([0,t])$ is $\F_t^a$-measurable by \eqref{eq:barpi}.
Then we define the probability measure $\xii_1^\pi$ on ${\CRo}\times \R_+$ by
\begin{align}\label{Diff11}\textstyle
\xii_1^\pi(D):=\xii(D)\!-\!\alpha_1(D)+
\int  (1\!-\!A^\xii\!\circ\! r(\omega,s))\ \xii^{r(\omega,s)}_{\theta_t(\eta)} (D_{\eta}-t) \ d\bar\pi\big((\omega,s), (\eta,t)\big).
\end{align}
 To motivate this definition, we note that  the support of the randomized stopping time $\xii$ can be viewed (informally) as a sub-tree of $S$. The joining $\pi$ defines a plan how to trim this tree, i.e.\ to cut a bush at position $r(\omega,s)$ and to plant it on top of $r(\eta,t)$. Hence, we take the tree, $\xii$, prepare the position where something will be newly planted, subtract $\alpha_1$ which takes away some mass, and plant as much as possible (accounting for the factor $(1-A^\xii\circ r(\omega,s))$ in \eqref{Diff10} and \eqref{Diff11}) on these stubs to end up with a tree of mass one again. 

 As a consequence of Definition \ref{def:CRST} for each $u$ the map $\eta \mapsto \xi^{(f,s)}_{\theta_t(\eta)}([0,(u-t)\vee0])$ is $\F_u^0$-measurable. Moreover, $(\xi^\pi_1)_\eta\in\mathcal P^{\leq 1}(\R_+)$ and it follows that $\xii^\pi_1\in\RST.$ From \eqref{Diff10} and \eqref{Diff11} it follows that for bounded measurable $F:S\to \R$ using \eqref{eq:barpi} 
 \begin{align}\label{eq:nu 1} 
   \begin{split}
     & \textstyle\int F \circ r\, d(\xii^\pi_1-\xii) =\\
     &\textstyle \int d\bar\pi((\omega,s),(\eta,t))\, (1-A^\xii\circ r(\omega,s))\left[\int F^{r(\eta,t)\oplus} ( r(\tilde\omega,u))\, d\xii^{r(\omega,s)}(\tilde\omega,u) - F\circ r(\eta,t)\right].
   \end{split} 
 \end{align}
 Adding \eqref{eq:nu 0} and \eqref{eq:nu 1} and recalling $2\xi^\pi=\xi^\pi_0+\xi^\pi_1$, we obtain for bounded measurable $F:S\to \R$ 
 \begin{align}\label{eq:nu 2}
   & \textstyle 2  \int F\circ r\, d(\xii^\pi-\xii) = \int d\bar\pi((\omega,s),(\eta,t))\,   (1-A^\xii\!\circ\! r(\omega,s)) \\ 
   \textstyle \Big[ & \textstyle F\circ r(\omega,s)
                      + \int F^{r(\eta,t)\oplus} (r(\tilde\omega,u))\, d\xii^{r(\omega,s)}(\tilde\omega,u)   - \int F^{r(\omega,s)\oplus}(r(\tilde\omega,u))\, d\xii^{r(\omega,s)}(\tilde\omega,u) - F\!\circ\!  r(\eta,t)
                      \Big].
                      \nonumber
 \end{align}
 Next we show that \eqref{eq:nu 2} extends to non-negative functions $F:S\to\R_+$ satisfying $\xi(F)<\infty$. Put $X(\omega):=\int F(r(\omega,t))~\xi_\omega(dt)$. Then $\E[X]=\xi(F)<\infty.$ Moreover, recalling Definition \ref{EAverage} we have
 $$\textstyle X^M_s(\omega)=\int_0^s F(r(\omega,t))~d\xi_\omega(t) +  \int F^{r(\omega,s)\oplus}(r(\tilde\omega,u))\cdot(1-A^\xi(r(\omega,s))~d\xi^{r(\omega,s)}(\tilde\omega,u)~.$$
 It then follows that
 \begin{align*}
   & \textstyle \iint F^{r(\omega,s)\oplus}(r(\tilde\omega,u))\cdot(1-A^\xi(r(\omega,s))~d\xi^{r(\omega,s)}(\tilde\omega,u)~d\bar\pi((\omega,s),(\eta,t))\\
   \leq & \textstyle \int X^M_s(\omega)~d\bar\pi((\omega,s),(\eta,t))  = \bar \E[\bar X^M_{\rho_0}\1_{\rho_0<\infty}]\leq\E[X^M_\infty]=\E[X]<\infty~, 
 \end{align*}
 where $\rho_0$ denotes the representation of $\alpha_0$ as in \eqref{eq:rhodefn} and $\bar X^M_t(\omega,u)=X^M_t(\omega)$. This implies
 $$ \textstyle \int d\bar\pi((\omega,s),(\eta,t))\, (1-A^\xii\!\circ\! r(\omega,s)) \Big[ \int F^{r(\omega,s)\oplus}(r(\tilde\omega,u))\, d\xii^{r(\omega,s)}(\tilde\omega,u) + F\circ r(\eta,t) \Big] < \infty,$$ hence \eqref{eq:nu 2} holds also for such $F$.  Applying this to $F(f,s)=s$ we find $\xii^\pi(T)=\xii(T)<\infty$. 
Taking $F(f,s)=G(f(s))$ for bounded measurable $G:\R\to \R$, the right hand side of \eqref{eq:nu 2} vanishes since $\bar\pi$ is concentrated on pairs $((\omega,s),(\eta,t))$ satisfying $\omega(s)=\eta(t)$. This implies that $\xii$ and $\xii^\pi$ embed the same distribution, i.e.\ $\xii^\pi\in \RST(\mu)$. 

Arguing on the negative and positive part of $\gamma$ and using that $\xi^\pi(\gamma^-),\xi(\gamma^-)<\infty$ we see that \eqref{eq:nu 2} applies to $F=\gamma$. By definition of $\SG^\xii$,
$$\textstyle (1-A^\xii\!\circ\! r(\omega,s)) 
\textstyle \Big[  \textstyle \gamma\circ r(\omega,s)
+ \int \gamma^{r(\eta,t)\oplus} \circ  r\, d\xi^{r(\omega,s)}   - \int \gamma^{r(\omega,s)\oplus}\circ r\, d\xii^{r(\omega,s)} - \gamma\circ r(\eta,t)
\Big]$$ is $\bar\pi$-a.s.\ strictly negative since $r(\proj_\Xs(\pi))(\{(f,s):\xi^{(f,s)}(T)=\infty \text{ or } \xi^{(f,s)}\notin\RST^1\})=0$ by Lemma \ref{lem:xifs}. Hence $\xii^\pi(\gamma) <\xii(\gamma)$, contradicting optimality of $\xii$.\qed

\subsection{Proof of Proposition \ref{KeLe}} 
Only the implication (1) $\Rightarrow$ (2) of Proposition \ref{KeLe} is non-trivial. The proof is based on Choquet's capacitability theorem and the following auxiliary duality result which is closely related to Proposition \ref{super hedging}. We fix $t_0\in \R_+$ and set $S\!_{t_0}:=\{(f,s)\in S: s\leq t_0\}$. 
\begin{proposition}\label{duality} 
  Consider a Polish probability space $(\Ys,\nu)$ and let $c:\CRo\times \R_+\times \Ys\to \R \cup \{\infty\}$ be lsc, predictable (cf.\ Remark \ref{ProdPred}) and bounded from below. Then 
  \begin{align}\label{New Duality2}\tag{$\star\star$}
    \inf_{\pi  } {\textstyle \int c(\omega,t,y)} \, \pi(d\omega,
    dt,dy)=\sup_{(\phi,\psi)} \left({\textstyle \W(\phi) +\nu(\psi)}\right)
  \end{align}
  where the infimum is taken over the set 
  \begin{align*}
    \TRST_{t_0}^{1} (\nu)=\{\pi\in \TRST^1(\nu): \supp\pi \subseteq \CRo \times [0, t_0] \times \Ys\}
  \end{align*}
  and the supremum is taken over $\phi\in C_b(\CRo)$, $\psi\in C_b(\Ys)$ such that 
  \begin{align*}
    \phi^M_t(\omega)+ \psi (y) \leq c(\omega, t, y), \mbox{ for } \omega \in \CRo, t\leq t_0, y\in \Ys.
  \end{align*}
\end{proposition}
\begin{proof}
As the arguments are almost identical to the ones from Proposition \ref{super hedging}
 we will only sketch the proof. By approximation it is sufficient to establish the result for continuous bounded $c$. 
As before the Monge-Kantorovich duality yields that \eqref{New Duality2} holds provided that $\pi$ and $(\phi, \psi)$, resp., satisfy
\begin{align*}
  \supp\pi \subseteq \CRo \times [0, t_0] \times \Ys, \proj_\CRo(\pi)=\W, \proj_\Ys(\pi)=\nu
  \\
\phi(\omega)+ \psi (y) \leq c(\omega, t, y), \mbox{ for } \omega \in \CRo, t\leq t_0, y\in \Ys.
\end{align*}
If $c$ is predictable, we can then argue as in the last step of Proposition \ref{super hedging} to obtain the assertion of Proposition \ref{duality} 
\end{proof}

We now state several consequences of Proposition \ref{duality} in which we switch the roles of $\inf$ and $\sup$ to provide a more natural formulation.  

Denote  for Borel $K\subseteq S\!_{t_0}\times \Ys $
\begin{align}\label{eq:preddualS}
D_{t_0}(K):= \inf_{(\phi,\psi)\in \DC_{t_0}(K)}\left( \W(\phi) + \nu(\psi) \right)
\end{align}
where $\DC_{t_0}(K)$ consists of all pairs of lsc $\phi,\psi$ on 
$\CRo$ resp.\
$\Ys$ satisfying 
\begin{align}\label{KCover}  0\leq \phi, \psi\leq 1, \1_K((f,s),y) \leq
  \phi^{M,S}(f,s)+\psi(y), (f,s)\in S\!_{t_0} , y\in \Ys,
\end{align}
where we recall the notation $\phi^{M,S}$ from Definition \ref{EAverage}.

\begin{corollary}\label{cor:bounded duality on S}
 Let $K\subset S\!_{t_0}\times \Ys$ be closed. Then 
 $$\sup_{\pi \in\TRT^1_{t_0}( \nu)} (r \otimes \id)(\pi)(K) = D_{t_0}(K).$$
\end{corollary}
\begin{proof}
  Fix $\eps>0$. Applying Proposition \ref{duality} to  $c=-\1_{(r\otimes \id)^{-1}(K)}$, which is lsc due to the continuity of $r$, we obtain that there exist functions $\phi\in C_b(\CRo)$, $\psi\in C_b(\Ys)$ such that
\begin{align}\label{RevCover}
 \1_K((f,s), y)\leq \phi^{M,S}(f,s)+ \psi (y) , \mbox{ for } (f,s)\in S\!_{t_0}, y\in \Ys, \quad \mbox{and}
\end{align}
 $$\textstyle
 \sup_{\pi \in\TRT^1_{t_0}( \nu)} \int \1_{(r\otimes \id)^{-1}(K)}\, d\pi = \sup_{\pi \in\TRT^1_{t_0}( \nu)} (r \otimes \id)(\pi)(K) >  {\textstyle \W(\phi)+ \nu(\psi)} -\eps.
$$ 
It follows from \eqref{RevCover} that $\phi^{M,S}$ is bounded from below on $S\!_{t_0}$ and wlog we may assume that $\phi(\omega)=\phi_{t_0}^M(\omega)$. Subtracting a constant from $\phi$ and adding it to $\psi$, we may assume that $\inf\phi= 0$ (which implies $\psi\geq0$). It follows that we can replace $\psi$ with $\bar\psi=\psi\wedge 1$. 

It suffices to consider the case $\W(\phi)\leq 1$.
Put $\rho=\inf\{t\geq0:\phi^M_t > 1\}.$ Due to $S$\!-continuity of $\phi^M$ (by Proposition~\ref{prop:very cont}) the set $O:=\{(\omega,t) :\phi^{M,S}\circ r(\omega,t)>1\}$ is open. Hence also $\{\rho<\infty\}=\proj_{\CRo} O$ is open as projections are open mappings and the map $\omega\mapsto \phi^M_{\rho(\omega)}(\omega)=:\bar \phi(\omega)\leq 1$ is lsc.  Clearly, $(\bar\phi,\bar\psi)$ satisfies \eqref{KCover} and $\W(\bar\phi)+\nu(\bar\psi)\leq \W(\phi)+\nu(\psi).$
\end{proof}

\begin{lemma}\label{lem:choquet}
$D_{t_0}$
is a Choquet capacity on $S\times \Ys.$
\end{lemma}
\begin{proof}
  We need to verify the defining properties of a capacity $\Psi$ (cf.\ \cite[Definition 30.1]{Ke95}):
\begin{enumerate}
 \item monotonicity: $A\subset B \Rightarrow \Psi(A)\leq \Psi(B)$
\item continuity from below: $A_1\subset A_2\subset... \Rightarrow \Psi(A_n) \to \Psi(\bigcup_j A_j)$
\item boundedness: $\Psi(K)<\infty$ for all compact $K$; if $\Psi(K)<u$ there exists open $U\supset K$ with $\Psi(U)<u.$
\end{enumerate}
Moreover, it is sufficient to test these properties for Borel sets (see \cite[Section 30B]{Ke95}).  The monotonicity is immediate. Let us turn to the continuity from below.

Take an increasing sequence $A_1 \subseteq A_2\subseteq \ldots \subseteq S \times \Ys$ of Borel sets and put $A=\bigcup_n A_n.$ For all $n$ there are lsc functions $\phi_n:{\CRo}\to [0,1]$ (which give rise to  $S$\!-lsc martingales) and  $\psi_n: \Ys \to [0,1]$ such that $\1_{A_n}((f,s),y)\leq \phi_n^{M,S}(f,s) + \psi_n(y)$ for all $(f,s)\in S\!_{t_0},y\in \Ys$ and
$$ \nu(\psi_n) + \W(\phi_n) \leq D_{t_0}(A_n) + 1/n .$$
Using a Mazur/Komlos-type lemma (e.g. Lemma A1.1\ in \cite{DeSc94}) we can assume that some appropriate convex combinations of $\psi_n$ and $\phi_n$ converge a.s.\ to functions $\psi$ and $\phi$. More precisely: there exist convex coefficients $\alpha_n^{n}, \ldots,\alpha_{k_n}^{n}, n\geq 1, k_n<\infty,$ and full measure subsets $\Omega_1\subset \CRo$, $\Ys_1\subseteq \Ys$ such that with $\tilde \phi_n:= \sum_{i=n}^{k_n} \alpha_i^{n} \phi_i$, $\tilde \psi_n:= \sum_{i=n}^{k_n} \alpha_i^{n} \psi_i$ we have that for all  $\omega\in \Omega_1$ and all $y\in \Ys_1$
\begin{align}\lim_{n\to \infty} \tilde \phi_n(\omega)=: \phi(\omega) \mbox{ and }
  \lim_{n\to \infty} \tilde\psi_n(y)=:\psi(y)
\end{align}
exist.  Extend these functions to ${\CRo}$ and $\Ys$, resp., through 
\begin{align}\limsup_{n\to \infty} \tilde \phi_n(\omega)=:\phi(\omega) \mbox{ and }
\limsup_{n\to \infty} \tilde\psi_n(y)=:\psi(y).
\end{align}
This implies for $(f,s)\in S$
\begin{align*}\textstyle
 \limsup_{n\to\infty} \tilde\phi_n^{M,S}(f,s)\leq \int \limsup_{n\to\infty} \tilde\phi_n(f\oplus \omega)~\W(d\omega) = \int \phi(f\oplus\omega)~ \W(d\omega)=\phi^{M,S}(f,s).
\end{align*}
Given $m\leq n$ we have for $(f,s)\in S\!_{t_0},y\in \Ys$
$$\I_{A_m}((f,s), y) \leq \tilde \phi^{M,S}_n(f,s)+\tilde\psi_n(y),$$
hence $\I_{A_m}((f,s), y) \leq \phi^{M,S}(f,s)+\psi(y)$ and thus also 
$$\I_{A}((f,s), y) \leq  \phi^{M,S}(f,s)+\psi(y).$$
Given $\eps>0$, we can find lsc functions $\phi^\eps \geq \phi$ and $\psi^\eps\geq \psi$ such that $\W(\phi^\eps)-\eps/2< \W(\phi)=\lim \W({\tilde \phi_n})$ and $\nu(\psi^\eps)-\eps/2 < \nu(\psi)=\lim \nu(\tilde \psi_n).$
It follows that 
\begin{align*}\textstyle
 D_{t_0}(A) \leq  \limsup_n  D_{t_0}(A_n) +1/n +\eps .
\end{align*}

Let us turn to the third property. Trivially, $D_{t_0}(K) \le 1$, so take a compact set $K\subset S\times \Ys$ and fix $\eps >0.$ By Corollary \ref{cor:bounded duality on S} 
there is $(\phi,\psi)\in \DC_{t_0}(K)$ such that
$$ \nu(\psi) + \W(\phi)   \leq D_{t_0}(K) + \epsilon.$$
As $(\phi,\psi)\in \DC_{t_0}(K)$ we have $K\subset \{((f,s),y):\phi^{M,S}(f,s)+\psi(y) \geq 1\}.$ At the additional cost of  2 $\eps$ we can find two lsc functions $\phi^\eps :=(\phi+\eps) \wedge 1 \geq \phi$ and $\psi^\eps:= (\psi+\eps) \wedge 1 \geq \psi$ such that $\W(\phi^\eps) + \nu(\psi^\eps)\leq \W(\phi) + \nu(\psi)+2\eps $ and $K\subset\{((f,s),y):(\phi^\eps)^{M,S}(f,s)+\psi^\eps(y)>1\}$.  By lower semi-continuity, $U:=\{((f,s),y):(\phi^\eps)^{M,S}(f,s)+\psi^\eps(y)>1\}$ is open. Hence, for every $\eps>0$ we have found an open $U\supset K$ such that $D_{t_0}(U)\leq D_{t_0}(K) + 3\eps$, proving the last claim.
\end{proof}

The next step is to show that up to a factor of $2$ we can restrict ourselves to dual functions $\phi$ and $\psi$ which are indicator functions. The simple reason is that if $1\leq a + b$  then $a> 1/2$ or $b\geq 1/2$.
 In the formulation of the next lemma and subsequently we use the notation
\begin{align*}
\deb_{t_0}(F) \,& :=\{ \omega: \exists t  < t_0, r(\omega, t)\in F\}.
\end{align*}
\begin{lemma}\label{lem:cover}
  Let $K\subseteq S\!_{t_0} \times \Ys$ be Borel. Then
\begin{align}\label{SetCover}
 \inf_{(F,A)\in \mathsf{Cov}(K)} \Big(\W(\deb_{t_0}(F))) + \nu(A)\Big) \leq 2 D_{t_0}(K),
\end{align}
$\text{where }\mathsf{Cov}(K)=\{F \subseteq S \mbox{ open }, A\subseteq \Ys: K\subseteq (F \times \Ys) \cup (S\times A)\}.$
\end{lemma}
\begin{proof}
We may assume $D_{t_0}(K)<1/2$, otherwise simply take $A=\Ys, F=\emptyset$.

 Take $(\phi,\psi)\in\DC_{t_0}(K)$. As the cost function is $\{0,1\}$-valued, the dual constraint
$$ \I_K((f,s),y)\leq \phi^{M,S}(f,s) + \psi(y)$$
implies that 
$$K\subseteq (\{(f,s): \phi^{M,S}(f,s)> 1/2\}\times \Ys) \cup (S \times \left\{y: \psi(y)\geq 1/2\right\}).$$
Recalling  that $0\leq\psi\leq 1$ we set $A=\{\psi\geq1/2\}$ and note that $\nu(A)/2\leq\nu(\psi) .$

Let us turn our attention to the set $F=\{(f,s):\phi^{M,S}(f,s)> 1/2 \}$. As $D_{t_0}(K)<1/2$, we may assume that $\phi^{M,S}(0,0)< {1}/{2}$. 
Given $\eps>0$ we apply the optional section theorem to $r^{-1}(F)\cap (\CRo\times [0,t_0))$ to obtain a stopping time $\tau$ such that $\W(\tau<t_0)>\W(\deb_{t_0}(F))-\eps$ and $\phi^M_\tau>1/2$ on $\{\tau< t_0\}$.  By optional stopping
$$  \E[\phi_0^M]= \E [\phi^M_\tau] \geq \W(\tau<t_0)/2. $$ 
As $\eps>0$ was arbitrary,   $ \W(\deb_{t_0}(F))+ \nu(A) \leq 2 (\E[\phi_0^M]+ \nu(\psi))$, establishing \eqref{SetCover}.
\end{proof}
\begin{proof}[Proof of Proposition \ref{KeLe}.]
Assume first that $E\subseteq S\!_{t_0}\times \Ys$. 
We have $\sup_{\pi\in \TRST^1( \nu )} \pi (K)=0$ for all compact $K\subset E$. By Corollary \ref{cor:bounded duality on S}, this implies that $D_{t_0}(K)=0$ for all compact $K\subset E$. By Choquet's capacitability theorem \cite[Theorem 30.13]{Ke95} and Lemma \ref{lem:choquet} this in turn implies $D_{t_0}(E)=0$.

Hence, by Lemma~\ref{lem:cover}, for each $\eps>0$ there exist  $F \subset S$ and a set $N\subset \Ys$ such that $E \subseteq (F \times \Ys)\cup (S \times N)$ and $\W(\deb_{t_0}(F))+\nu(N)\leq 2\eps.$

For each $k$, pick some  set $F_k\subset S$ and a set $N_k\subset \Ys$ such that $E \subseteq \left(F_k\times \Ys\right)\cup \left(S \times N_k\right)$ and $\W(\deb_{t_0}(F_k)) + \nu(N_k) \leq 2^{-k}$. 
 Setting $ F=\limsup_k F_k$ and $ N= \limsup_k N_k$ we get $\W(\deb_{t_0}( F))=0$, $\nu(N)=0$ and 
\begin{eqnarray*}\textstyle 
  E  \subseteq  \left( F\times \Ys\right)\cup \left(S \times  N\right).
\end{eqnarray*}
To establish the result in the case of general $E\subseteq S\times \Ys$, for each $n\in \N$ pick sets $N_n\subseteq \Ys, \nu (N_n)=0,$ $F_n\subseteq S$, $\W(\deb_{n}(F_n))=0$ such that $E\cap (S_n\times \Ys)\subseteq (F_n\times \Ys) \cup (S\times N_n).$ Then $N:= \bigcup_{n\geq 1} N_n$ and $F:=\bigcup_{n\geq 1} F_n$ are as required.
\end{proof}

\subsection{A secondary minimization result}\label{sec:scd max}

In certain cases, in order to resolve possible non-uniqueness of a minimizer, it will be useful to identify particular solutions as the solution not only to a primary optimization result, but also as the unique optimizer within this class of a second minimization problem. To this end, we begin by making the following definition: Supposing that $\gamma, \tilde \gamma:S \to \R$ are Borel measurable, we write $\Opt_\gamma$ for the set of optimizers of \eqref{RealPri}.  If $\Opt_\gamma\neq\emptyset$, we consider the secondary optimization problem
  \begin{equation}\label{eq:secmax}\textstyle
    P_{\tilde\gamma|\gamma} = \inf_{\xii \in \Opt_\gamma} \int \tilde\gamma\, d\xii.
  \end{equation}
We will say that \eqref{eq:secmax} is \emph{well posed} if the primary optimization problem \eqref{RealPri} is well posed and 
$\int \tilde\gamma\, d\xii$ exists with values in $(-\infty,\infty]$ for all $\xii\in  \Opt_\gamma$ and is finite for one such $\xii$.
Observe that, when $P_\gamma$ is finite and the map $\pi\mapsto \int \gamma\ d\pi$ is lsc the set $\Opt_\gamma$ is a closed subset of $\RST(\mu)$, and hence also compact.

We need an extended version of the stop-go pairs introduced in Definition~\ref{WeakBP}.
\begin{definition} \label{def:SG2} 
    Let $\gamma,\tilde\gamma:S\to\R$ be Borel measurable. The set of secondary stop-go pairs $\SG_2^\xi$ (relative to $\xi$) consists of all $\big( (f,s), (g,t)\big)\in S\times S$, $f(s)= g(t)$ \comment{DC (31) Changed as indicated.}  such that either $((f,s), (g,t)) \in \SG^\xi$, or 
\begin{equation}\label{E:WeakBP2}
\begin{split}
& \textstyle \int \gamma^{(f,s)\oplus}\circ r\, d\xi^{(f,s)}+ \gamma(g,t) = \gamma(f,s) + \int \gamma^{(g,t)\oplus}\circ r \, d\xi^{(f,s)}\,\\
\text{and } & \textstyle \int \tilde{\gamma}^{(f,s)\oplus}\circ r\, d\xi^{(f,s)}+ \tilde{\gamma}(g,t) > \tilde{\gamma}(f,s) + \int \tilde{\gamma}^{(g,t)\oplus}\circ r \, d\xi^{(f,s)}.
\end{split}
\end{equation}
As before, we also say that \eqref{E:WeakBP2} holds if any of the integrals in the second equation are not defined, or the integral on the left-hand side equals $\infty$.

We also define secondary stop-go pairs \emph{in the wide sense} by  ${\SGW_2}^\xi=\SG_2^\xi \cup \{(f,s)\in S: A^\xi(f,s)=1\}\times S$.
\end{definition}

Then we have the following generalization of Theorem~\ref{GlobalLocal2}.
\begin{theorem}
  \label{thm:second-maxim-result}
  Let $\gamma, \tilde\gamma$ be Borel measurable functions on $S$. Suppose that $\Opt_\gamma\neq\emptyset$, and that the optimization problem \eqref{eq:secmax}
  is well posed with optimizer $\xii\in\Opt_\gamma$.
  Then there exists a Borel set $\Gamma\subseteq S$ such that $r(\xii)(\Gamma) = 1$ and
  \begin{equation}\label{eq:SBP}
    {\SGW_2}^{\xii} \cap \big(\Gamma^< \times \Gamma\big) = \emptyset.
  \end{equation}
\end{theorem}

The proof given for Theorem \ref{GlobalLocal2} also applies in the present situation. Hence, the result follows immediately from the following straightforward variant of Proposition \ref{Invisible2}.

\begin{proposition}
  Assume that $\gamma, \tilde{\gamma}: S \to \R$ are measurable, the optimization problem \eqref{eq:secmax} is well posed, and that $\xii \in \RST(\mu)$ is an optimizer. Then $(r \otimes \id)(\pi)(\SG_2^\xii)=0$ for any $\pi\in \TRST^1(r(\xii))$.
\end{proposition}
\begin{proof} 
 As $\xii\in\Opt_\gamma$ we have to show that $(r \otimes \id)(\pi)(\SG_2^\xii\setminus\BP^\xii)=0$, however this follows by considering the same construction as in the proof of Proposition \ref{Invisible2}. 
\end{proof}

\section{Embeddings in abundance}
\label{sec:embeddings-abundance}

In the following we suppose that $(\Omega,\G,(\G_t)_{t\geq0},\P)$ is a stochastic basis which is sufficiently rich to support  a Brownian motion $B$ and a uniformly distributed $\G_0$-random variable. We suppose that $\gamma: S\to \R$ is a Borel measurable function. In a slight abuse of notation we will also write $(\gamma_t)_{t\in \R_+}$ for the process given by 
$$t\mapsto \gamma((B_s)_{s\leq t}, t).$$
In the previous section we have considered a secondary optimization problem and a version of the monotonicity principle (Theorem \ref{thm:second-maxim-result}) accounting for this extension. We now give a brief summary in probabilistic terms. 

Write $\Opt_\gamma$ for the set of $\G$-stopping times on $\Omega$ which are  optimizers of  \eqref{IntPri}
and consider  another Borel function $\tilde \gamma:S\to\R$. We call ${\hat \tau}\in \Opt_\gamma$  a secondary minimizer if it solves
\begin{align}\label{OptSep2}\tag{OptSEP$_2$}
 P_{\tilde\gamma|\gamma}=\inf\{\E\left[ \tilde\gamma_\tau\right]  : \tau\in\Opt_\gamma\}.
\end{align}
As in \eqref{eq:secmax}  we say that \eqref{OptSep2} is well posed if the primary optimization problem \eqref{IntPri} is well posed and 
$\E\left[ \tilde\gamma_\tau\right]$ exists with values in $(-\infty,\infty]$ for all $\tau\in  \Opt_\gamma$ and is finite for one such $\tau$.
Then we have the following version of Theorems~\ref{MinimizerExists} and \ref{MinimizerExistsP}:
\begin{theorem}\label{MinimizerExists2}
  Let $\gamma, \tilde{\gamma}:S\to \R$ be lsc and bounded from below in the sense of \eqref{MartinBound}.  Then \eqref{OptSep2} admits a minimizer $\hat \tau$.
\end{theorem}

 We now provide the appropriate generalizations of Definitions~\ref{def:SG} and \ref{def:Gamma1} and Theorem~\ref{GlobalLocal} for this case.

\begin{definition} \label{def:SG2B} The pair $\big((f,s), (g,t)\big)\in S\times S$ constitutes a \emph{secondary stop-go pair}, written $\big((f,s), (g,t)\big)\in\SG_2$, iff $f(s)=g(t)$, and 
 for \emph{every} $(\F^B_t)_{t \ge 0}$-stopping time $\sigma$ which satisfies $0 < \E[\sigma] < \infty$, 
  \begin{align} \label{BPIneq1}
    \E\big[\big(\gamma^{(f,s)\oplus }\big)_{\sigma}\big]\ +\ \gamma(g,t)\quad \geq \quad \gamma(f,s) \ +\ \E\big[\big(\gamma^{(g,t)\oplus }\big)_{\sigma}\big],
  \end{align}
  whenever both sides are well defined, and the left-hand side  is finite; and if
  \begin{align}\label{BPIneq2}
    \E\big[\big(\gamma^{(f,s)\oplus }\big)_{\sigma}\big]\ +\ \gamma(g,t)\quad = \quad \gamma(f,s) \ +\ \E\big[\big(\gamma^{(g,t)\oplus }\big)_{\sigma}\big]
  \end{align}
  then
  \begin{align}
    \E\big[\big(\tilde\gamma^{(f,s)\oplus }\big)_{\sigma}\big]\ +\ \tilde\gamma(g,t)\quad&>\quad \tilde\gamma(f,s) \ +\ \E\big[\big(\tilde\gamma^{(g,t)\oplus }\big)_{\sigma}\big], \label{BPIneq3}
  \end{align}
  whenever both sides are well-defined and the left-hand side (of \eqref{BPIneq3}) is finite.
\end{definition}

\begin{definition} \label{def:ggtildeMon}
We say that $\Gamma$ is $ \tilde\gamma| \gamma$-monotone if \begin{align}\label{GM3}\SSG \cap \big( \Gamma^< \times \Gamma\big)=\emptyset.\end{align} 
\end{definition}
From Theorem~\ref{thm:second-maxim-result} together with a trivial modification of Lemma~\ref{lem:SGsubSGW} we then obtain:
\begin{theorem}[Monotonicity Principle II]\label{GlobalLocal3}
  Let $\gamma, \tilde \gamma:S\to\R$ be Borel measurable, suppose that \eqref{OptSep2} is well posed and that $\hat \tau$ is an optimizer.
  Then there exists a \emph{$\tilde\gamma| \gamma$-monotone} Borel set $\Gamma\subset S$ such that $\P$-a.s.
\begin{align}\label{GammaSupport2} ((B_t)_{t\leq\hat\tau},\hat\tau)\in\Gamma\;.\end{align}
\end{theorem}

\subsection{Recovering classical embeddings}
\label{sec:recov-class-embedd}

In this section we derive a number of classical embeddings as well as establish new embeddings. Figure~\ref{fig:AssortedConstructions} shows graphical representations of some of these constructions. We highlight the common feature of all these pictures: when plotted in an appropriate phase space, the stopping time is the hitting time of a barrier type set. Identifying the appropriate phase space, and determining the exact structure of the barrier will be the key step in deriving the solutions to \eqref{SkoSol} in this section.

\begin{figure}[th]
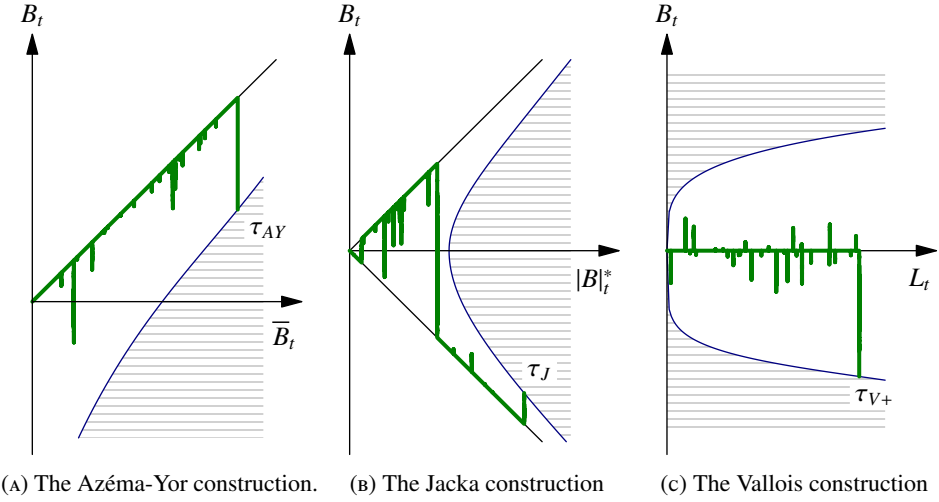

\begin{subfigure}{.33\textwidth}
\centering
\begin{asy}[width=0.9\textwidth]
  import graph;
  import stats;
  import patterns;

  // Construct a Brownian motion of time length T, with N time-steps
  int N = 3000;
  real T = 1.25;
  real dt = T/N;
  real B0 = 0;

  real[] B; // Brownian motion
  real[] t; // Time
  real[] M; // Maximum

  path BM;
  path BMM;

  real xmax = 1;
  real xmin = -0.6;
  real ymax = (xmax-xmin)/1.6;

  // Seed the random number generator. Delete for a "random" path:
  srand(98);

  B[0] = B0;
  t[0] = 0;
  M[0] = B[0];
  
  BM = (t[0],B[0]);
  BMM = (M[0],B[0]);

  //real psiinv(real y) {return y-((y-2)**2)*exp(-y/4)/4-1/2;}
  real psiinv(real y) {return max((-exp(-(y-0.7)**3)+y*1.2+0.4),xmin);}

  real x0 = 0.18;
  real x1 = 0.9;

  real eps3 = 0.275;
  
  int H = N+1;
  int BMMstop;
  
  for (int i=1; i<N+1; ++i)
  {
    B[i] = B[i-1] + Gaussrand()*sqrt(dt);
    t[i] = i*dt;
    M[i] = (B[i] > M[i-1])? B[i] : M[i-1];
    BM = BM--(t[i],B[i]);
    if (M[i-1] < M[i])
    BMM = BMM--(M[i-1],M[i-1])--(M[i],B[i]);
    else
    {
      if ((H==N+1)&&(B[i]<psiinv(M[i])||(M[i] >= ymax-eps3)))
      {
        H = i;
        BMMstop = length(BMM)+1;
        BMM = BMM--(M[i],psiinv(M[i]));
      }
      else
      {
        BMM = BMM--(M[i],B[i]);
      }
    }
  }
  
  if (H==N+1)
  BMMstop = length(BMM);

  pen p = deepgreen + 1.5;
  pen p2 = lightgray + 0.25;
  
  //if (H<N+1)
  //draw(subpath(BMM,BMMstop,length(BMM)),p2);

  pair tau = point(BMM,BMMstop);

  //draw((B0,psiinv(tau.y))--(tau.y,psiinv(tau.y)),p2+dashed);
  
  pen q = black + 0.5;

  real eps1 = 0.05;

  real eps2 = 0.15;

  path barrier = (graph(identity,psiinv,x0,x1)--(max(ymax-eps2,x1),psiinv(x1))--(max(ymax-eps2,x1),psiinv(x0))--cycle);

  add("hatch",hatch(1mm,W,mediumgray));
  
  fill(barrier,pattern("hatch"));

  //Label L = Label("$\Psi_{\mu}(B_t^0)$",UnFill);

  // draw(shift(-1mm,0)*Label("$\Psi_{\mu}(B_t^0)$",UnFill(0.5mm)),graph(psiinv,identity,B0,xmax),NW,deepblue+0.5);
  draw(graph(identity,psiinv,x0,x1),NW,deepblue+0.5);

  draw((B0,B0)--(xmax-eps1,xmax-eps1));

  draw(subpath(BMM,1,BMMstop),p);
  draw((0,xmin)--(0,xmax+eps1),q,Arrow);
  draw((0,0)--((ymax+eps1),0),q,Arrow);
  label("$B_t$",(0,xmax+eps1),(0,1));
  //label("$\sup_{s \le t} B_t$",(0,(xmax+eps1)),(0,1));
  label("$\ol{B}_t$",((ymax+eps1),-eps1),SW);
  
  draw(Label("$\tau_{AY}$",UnFill(0.25mm)),(tau.x+0.12,tau.y-0.08));   
\end{asy}
\caption{The Az\'ema-Yor construction.}
\end{subfigure}%
\begin{subfigure}{.33\textwidth}
\centering
\begin{asy}[width=0.9\textwidth]
  import graph;
  import stats;
  import patterns;

  // import settings;

  // gsOptions="-P"; 

  // Construct a Browniam motion of time length T, with N time-steps
  int N = 3000;
  //int N = 300;
  real T = 1.6*1.6;
  real dt = T/N;
  real B0 = 0;

  real sig = 0.7;

  real xmax = 0.8;
  real xmin = -0.8;

  real tmax = (xmax-xmin)/1.6;

  real[] B; // Brownian motion
  real[] t; // Time
  real[] M; // Abs max

  path BM;

  // Seed the random number generator. Delete for a "random" path:
  srand(108);

  B[0] = B0;
  t[0] = 0;
  M[0] = 0;

  BM = (M[0],B[0]);

  // Define a barrier

  // Assumes hits below zero.
  real R(real y) {return sqrt(y**2+0.15)+0.5*(y-1)**2*max(y,0)**2;}
  real Rinv(real y) {return sqrt(y**2-0.15);}
  
  int H = N+1;
  int H2 = N+1;
  int BMstop;
  int BMstop2 = N+2;

  for (int i=1; i<N+1; ++i)
  {
    B[i] = B[i-1] + sig*Gaussrand()*sqrt(dt);
    t[i] = i*dt;
    M[i] = max(M[i-1],abs(B[i]));

    if ((H==N+1)&&(M[i]>=R(B[i])))
    {
      H = i;
      BMstop = length(BM);
      B[i] = Rinv(M[i])*sgn(B[i]);
    }
    if (M[i-1]==M[i])
    {
      BM = BM--(M[i],B[i]);
    }
    else
    {
      BM = BM--(M[i-1],sgn(B[i-1])*M[i-1])--(M[i],sgn(B[i])*M[i])--(M[i],B[i]);
    }

  }

  if (H==N+1)
  BMstop = length(BM);

  pen p = deepgreen + 1.5;
  pen p2 = lightgray + 0.25;
  //pen p2 = mediumgray + 1;

  //if (H<N+1)
  //draw(subpath(BM,BMstop,BMstop2),p2);

  pair tau = point(BM,BMstop+1);

  pen q = black + 0.5;

  real eps1 = 0.05;
  real eps2 = 0.15;

  path barrier = (graph(R,identity,xmin+eps1,xmax-eps1)--(max(tmax-eps2,R(xmax-eps1),R(xmin+eps1)),xmax-eps1)--(max(tmax-eps2,R(xmax-eps1),R(xmin+eps1)),xmin+eps1)--cycle);

  add("hatch",hatch(1mm,W,mediumgray));

  fill(barrier,pattern("hatch"));

  draw((0,0)--(xmax-eps1,xmax-eps1),q);
  draw((0,0)--(xmax-eps1,xmin+eps1),q);

  draw(graph(R,identity,xmin+eps1,xmax-eps1),NW,deepblue+0.5);

  draw((0,xmin)--(0,xmax+eps1),q,Arrow);
  draw((0,0)--((tmax+eps1),0),q,Arrow);

  //draw((tau.x,0)--tau,mediumgray+dashed);
  //label("$\hat\tau$",(tau.x,tau.y),S);
  draw(Label("$\tau_J$",UnFill(0.25mm)),(tau.x+0.06,tau.y+0.08));   

  draw(subpath(BM,0,BMstop+1),p);

  label("$|B|^*_t$",(tmax+eps1,-eps1),SW);
  label("$B_t$",(0,(xmax+eps1)),(0,1));

  //label("$B_{\Rt}$",(3.5,0.5),UnFill(0.5mm));

  //label("$D_{\Rt}$",(2.2,1.35),UnFill(0.5mm));
\end{asy}
\caption{The Jacka construction}
\end{subfigure}%
\begin{subfigure}{.33\textwidth}
\centering
\begin{asy}[width=0.9\textwidth]
  import graph;
  import stats;
  import patterns;

  // import settings;

  // gsOptions="-P"; 

  // Construct a Browniam motion of time length T, with N time-steps
  int N = 3000;
  //int N = 300;
  real T = 1.6*1.6;
  real dt = T/N;
  real B0 = 0;

  real sig = 0.7;

  real xmax = 0.8;
  real xmin = -0.8;

  real tmax = (xmax-xmin)/1.6;

  real[] B; // Brownian motion
  real[] M; // Max of Brownian motion
  real[] t; // Time
  real[] sign; // Sign of excursion

  real[] B2; // Brownian motion

  path BM;

  // Seed the random number generator. Delete for a "random" path:
  srand(94);

  B[0] = B0;
  t[0] = 0;
  B2[0] = B0;
  M[0] = B[0];
  sign[0] = sgn(Gaussrand());

  BM = (M[0],B2[0]);

  // Define a barrier

  real R1(real y) {return 0.8*xmax*exp(2.5*(y-4))+y**(1/4)/2;}
  real R2(real y) {return 0.55*xmin*exp(3.3*(y-2))+0.3*xmin*exp(1.4*(y-3))-y**(1/6)/2;}

  int H = N+1;
  int H2 = N+1;
  int BMstop;
  int BMstop2 = N+2;

  for (int i=1; i<N+1; ++i)
  {
    B[i] = B[i-1] + sig*Gaussrand()*sqrt(dt);
    t[i] = i*dt;
    M[i] = max(M[i-1],B[i]);
    
    B2[i] = sign[i-1]*(M[i]-B[i]);

    if (M[i]==B[i])
    {
      sign[i] = sgn(Gaussrand());
    }
    else
    {
      sign[i] = sign[i-1];
    }

    if ((H==N+1)&&(B2[i]>=R1(M[i])))
    {
      H = i;
      BMstop = length(BM);
      B2[i] = R1(M[i]);
    }

    if ((H==N+1)&&(B2[i]<=R2(M[i])))
    {
      H = i;
      BMstop = length(BM);
      B2[i] = R2(M[i]);
    }
    
    if (M[i] > M[i-1])
    {
      BM = BM--(M[i-1],0)--(M[i],B2[i]);   
    }
    else
    {      
      BM = BM--(M[i],B2[i]);   
    }
  }

  if (H==N+1)
  BMstop = length(BM);

  pen p = deepgreen + 1.5;
  pen p2 = lightgray + 0.25;
  //pen p2 = mediumgray + 1;

  //if (H<N+1)
  //draw(subpath(BM,BMstop,BMstop2),p2);

  pair tau = point(BM,BMstop+1);

  pen q = black + 0.5;

  real eps1 = 0.05;
  real eps2 = 0.15;
  real eps3 = 0.1;

  path barrier1 = (graph(R1,0,tmax-eps2)--(tmax-eps2,xmax-eps3)--(0,xmax-eps3)--cycle);
  path barrier2 = (graph(R2,0,tmax-eps2)--(tmax-eps2,xmin+eps3)--(0,xmin+eps3)--cycle);

  add("hatch",hatch(1mm,W,mediumgray));

  fill(barrier1,pattern("hatch"));
  fill(barrier2,pattern("hatch"));

  draw(graph(R1,0,tmax-eps2),NW,deepblue+0.5);
  draw(graph(R2,0,tmax-eps2),NW,deepblue+0.5);

  draw((0,xmin)--(0,xmax+eps1),q,Arrow);
  draw((0,0)--((tmax+eps1),0),q,Arrow);

  //draw((tau.x,0)--tau,mediumgray+dashed);
  //label("$\hat\tau$",(tau.x,0),S);
  draw(Label("$\tau_{V+}$",UnFill(0.25mm)),(tau.x+0.06,tau.y-0.08));   

  draw(subpath(BM,0,BMstop+1),p);

  label("$L_t$",(tmax+eps1,-eps1),SW);
  label("$B_t$",(0,(xmax+eps1)),(0,1));

  //label("$B_{\Rt}$",(3.5,0.5),UnFill(0.5mm));

  //label("$D_{\Rt}$",(2.2,1.35),UnFill(0.5mm));
\end{asy}
\caption{The Vallois construction}
\end{subfigure}%
\caption{Representations of the Az\'ema-Yor, Vallois and Jacka constructions.}\label{fig:AssortedConstructions}
\end{figure}

 For subsequent use, it will be helpful to write, for $(f,s) \in S$, $\ol{f} = \sup_{r \le s} f(r)$, $\ul{f} = \inf_{r \le s} f(r)$ and $|f|^* = \sup_{r \le s} |f(r)|$. 
\begin{theorem}[The Az\'ema-Yor embedding, cf.\ \cite{AzYo79}]
  \label{thm:AY}
  There exists a stopping time $\tau_{AY}$ which \emph{maximizes}
  \begin{equation*}
    \E\Big[\sup_{t \le \tau} B_t\Big]
  \end{equation*}
  over all solutions to \eqref{SkoSol} and which is of the form $\tau_{AY} = \inf \big\{ t > 0: B_t \le \psi\big(\sup_{s \le t} B_s\big)\big\}$ a.s., for some increasing function $\psi$.
\end{theorem}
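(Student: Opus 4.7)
The plan is to specialize the machinery to the functional $\gamma(f,s):=-\bar f(s)$ with $\bar f(s):=\max_{0\le r\le s} f(r)$, so that minimizing $\E[\gamma_\tau]$ is exactly maximizing $\E[\sup_{t\le\tau}B_t]$. The map $\gamma$ is $S$-continuous, and the elementary inequality $-y\ge -\tfrac14 - y^2$ gives the lower bound required in \eqref{MartinBound}. Proposition~\ref{MinimizerExistsP} then yields a minimizer $\nu\in\RST(\mu)$, and Theorem~\ref{GlobalLocal2} supplies a $\gamma$-monotone Borel set $\Gamma\subseteq S$ with $r(\nu)(\Gamma)=1$.

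Next I would identify $\BP^\nu$. For paths with $f(s)=g(t)=x$ write $m:=\bar f(s)\ge x$ and $M:=\bar g(t)\ge x$. Since the running max of the concatenation is $\max(m, x+\bar\omega(r))$, the inequality in Definition~\ref{WeakBP} rewrites as
\[\int\bigl[(x+\bar\omega(r)-M)^+ - (x+\bar\omega(r)-m)^+\bigr]\,d\nu^{(f,s)}(\omega,r)\;>\;0.\]
The integrand has pointwise sign $\sgn(m-M)$ and vanishes on $\{\bar\omega(r)\le\min(m,M)-x\}$, so $((f,s),(g,t))\in\BP^\nu$ precisely when $m>M$ \emph{and} $\nu^{(f,s)}(\{\bar\omega(r)>M-x\})>0$. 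Using Theorem~\ref{thm:taming} I would discard the exceptional $(f,s)\in\Gamma^<$ for which the second condition fails against some witness in $\Gamma$; after this shrinking, $\gamma$-monotonicity forces the key implication: for $(f,s)\in\Gamma^<$ and $(g,t)\in\Gamma$ with $f(s)=g(t)$, one has $\bar f(s)\le \bar g(t)$.

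To extract the barrier I would define
\[\psi^{-1}(x):=\inf\{M:\exists\,(g,t)\in\Gamma,\ g(t)=x,\ \bar g(t)=M\},\]
so that the implication above reads $g(t)\le\psi(\bar g(t))$ for all $(g,t)\in\Gamma$. Monotonicity of $\psi$ follows from a second application of the stop-go analysis: if $x_1<x_2$ yet $\psi^{-1}(x_1)>\psi^{-1}(x_2)$, then a stopped path $(g_1,t_1)\in\Gamma$ ending at $(\psi^{-1}(x_1),x_1)$ must first have visited $(\psi^{-1}(x_1),x_2)$ at some earlier time $\tau<t_1$ (by continuity and monotonicity of the running max); its restriction $(g_1\lsub{[0,\tau]},\tau)$ lies in $\Gamma^<$ with maximum $\psi^{-1}(x_1)>\psi^{-1}(x_2)=\bar g_2(t_2)$ for any stopped witness $(g_2,t_2)\in\Gamma$ at $x_2$, contradicting the implication. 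Combined with the Loynes-type uniqueness argument of Remark~\ref{rem:Loynes}, this yields $\nu$-almost surely that $\nu$ equals the hitting time $\tau_{AY}=\inf\{t\ge 0 : B_t\le\psi(\bar B_t)\}$.

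The hard part is that, unlike for Root and Rost, the strict stop-go condition \eqref{BPIneq} fails at the trivial extension with $\bar h(u)=0$: both sides of the inequality collapse to $-m-M$ versus $-m-M$. Consequently only the averaged notion $\BP^\nu$ carries information here, and the crux of the argument is controlling the $(f,s)\in\Gamma^<$ whose conditional stopping measure $\nu^{(f,s)}$ is so concentrated near zero that it never moves the running maximum past $M-x$. This is precisely what the taming refinement, Theorem~\ref{thm:taming}, is designed to handle, and its use in the second step above is essential.
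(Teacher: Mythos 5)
Your overall strategy---phase space $(\ol B_t,B_t)$, stop-go analysis, barrier extraction---is the paper's, and your computation of $\BP^\nu$ for $\gamma(f,s)=-\ol f$ is correct, including the identification of the degenerate case where the conditional continuation never pushes the running maximum of $g$ past $\ol g$. However, your proposed remedy for that degenerate case does not work, and this is a genuine gap. Theorem~\ref{thm:taming} only allows you to discard a set $U\subseteq S$ that is \emph{null for every} $\pi\in\RST$ whose support is compatible with that of $\nu$ (condition \eqref{eq:longerlife2}). The set of stubs $(f,s)\in\Gamma^<$ for which $\nu^{(f,s)}$ is concentrated on $\{\ol\omega(r)\le \ol g - f(s)\}$ is not such a polar set: under a perfectly reasonable optimizer the continuation after $(f,s)$ can stop quickly with positive probability, so this set can carry positive mass for admissible $\pi$, and the hypothesis of Theorem~\ref{thm:taming} fails. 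Moreover, you cannot simply ``remove'' elements from $\Gamma^<$, since $\Gamma^<$ is determined by $\Gamma$; the only way to shrink $\Gamma^<$ is to shrink $\Gamma$, which is exactly what Theorem~\ref{thm:taming} does when its hypothesis holds---and here it does not.

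The paper resolves the tie differently: it introduces a \emph{secondary} minimization with $\tilde\gamma((f,s))=\phi(\ol f)\,f(s)^2$ for a bounded strictly increasing $\phi$, and applies the secondary monotonicity principle (Theorem~\ref{GlobalLocal3}, via Theorem~\ref{thm:second-maxim-result}). In the degenerate case the primary inequality \eqref{BPIneq2} holds with equality, and the secondary inequality \eqref{BPIneq3} reduces to $(\phi(\ol f)-\phi(\ol g))\,\E[\sigma]>0$, which is strict precisely because $\ol f>\ol g$. Hence the full set $\{((f,s),(g,t)):f(s)=g(t),\ \ol g<\ol f\}$ is contained in $\SBP^\nu$, and $\gamma$-monotonicity in the secondary sense gives the implication you want without any discarding. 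You should replace your taming step by this secondary-optimization argument; the remainder of your barrier extraction (with the paper's cleaner choice $\mathcal R_\cl=\{(m,x):\exists(g,t)\in\Gamma,\ \ol g\le m,\ g(t)=x\}$, for which monotonicity of $\psi_0(m)=\sup\{x:(m,x)\in\mathcal R_\cl\}$ is immediate, followed by the sandwich $\tau_\cl\le\tau_{AY}\le\tau_\op$ and $\tau_-=\tau_+$ a.s.) then goes through.
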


\begin{proof}
  Fix a bounded and strictly increasing continuous function $\phi:\R_+\to\R$ and consider the  continuous functions $\gamma((f,s)) = -\ol{f}$ and $\tilde\gamma((f,s)) = \phi(\ol{f})(f(s))^2$. Then
 \eqref{OptSep2} is well posed and by Theorem~\ref{MinimizerExists2} there exists a minimizer $\tau_{AY}$.  By Theorem \ref{GlobalLocal3}, pick a $\tilde\gamma|\gamma$-monotone set $\Gamma \subseteq S$ supporting $\tau_{AY}$. We claim that 
\begin{align}\label{AYBadPairs}
\SSG \supseteq\{((f,s),(g,t))\in S\times S: g(t)=f(s), \ol g <\ol f \}.
\end{align} 
This is represented graphically in Figure~\ref{fig:AYSG}.

\begin{figure}[t]
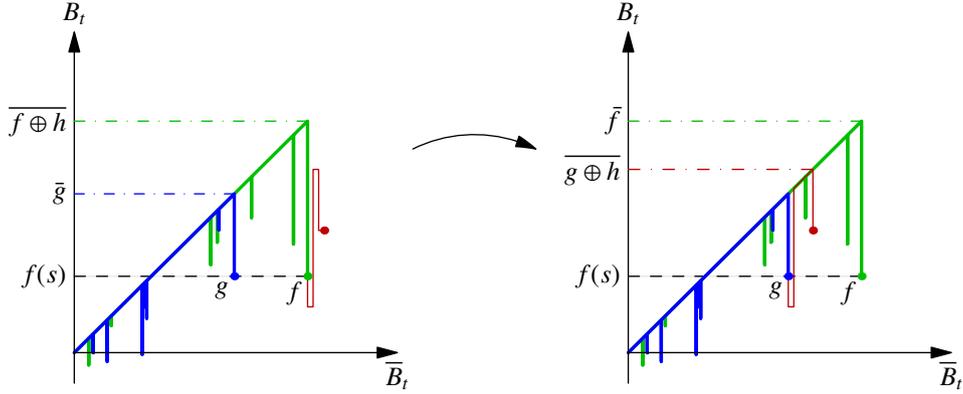

\centering
\begin{asy}[width=\textwidth]
  import graph;
  import stats;
  import patterns;

  // Construct a Brownian motion of time length T, with N time-steps
  //int N = 3000;
  //real T = 5;
  //real dt = T/N;
  real B0 = 0;

  real[] B; // Brownian motion
  //real[] t; // Time
  real[] M; // Maximum

  real[] B2; // Brownian motion
  //real[] t2; // Time
  real[] M2; // Maximum

  //path BM;
  path BMM;
  //path BM2;
  path BMM2;

  real xmax = 1;
  real xmin = -0.1;
  real ymax = xmax;

  // Seed the random number generator. Delete for a "random" path:
  srand(98);

  //B[0] = B0;
  //t[0] = 0;
  //M[0] = B[0];
  //B2[0] = B0;
  //t2[0] = 0;
  //M2[0] = B[0];
  
  //BM = (t[0],B[0]);
  BMM = (B0,B0);
  //BM2 = (t2[0],B2[0]);
  BMM2 = (B0,B0);

  int N1 = 6;
  real lambda = 8;
  real sig = 0.2;
  real temp=0;
  real temp2;

  real xfinal = 0.25;

  for (int i = 1; i<N1+1;++i)
  {
    temp = temp-log(unitrand())/lambda;
    temp2 = max(temp-abs(Gaussrand()*sig),(xmin*0.5+temp*0.2));
    BMM = BMM--(temp,temp)--(temp,temp2)--(temp,temp);
  }

  temp = temp-log(unitrand())/lambda;
  BMM = BMM--(temp,temp)--(temp,xfinal);

  real M1 = temp;
  
  temp = 0;
  for (int i = 1; i<N1+1;++i)
  {
    temp = temp-log(unitrand())/lambda;
    temp2 = max(temp-abs(Gaussrand()*sig),(xmin*0.5+temp*0.2));
    BMM2 = BMM2--(temp,temp)--(temp,temp2)--(temp,temp);
  }

  temp = temp-log(unitrand())/lambda;
  BMM2 = BMM2--(temp,temp)--(temp,xfinal);

  real M2 = temp;

  path BM1Ext, BM2Ext;
  
  pair final1 = point(BMM,length(BMM));
  pair final2 = point(BMM2,length(BMM2));
  
  real eps= 0.018;

  BM1Ext =
  final1--(final1.x,0.15)--(final1.x+eps,0.15)--(final1.x+eps,0.6)--(final1.x+2*eps,0.6)--(final1.x+2*eps,0.4)--(final1.x+3*eps,0.4);
  
  BM2Ext = final2--(final2.x,0.15)--(final2.x+eps,0.15)--(final2.x+eps,final2.x+eps)--(0.6,0.6)--(0.6,0.4);
   
  int BMMstop = length(BMM);

  pen col1 = heavyred;
  pen col2 = heavygreen;
  pen col3 = blue;

  pen p = col2 + 1.25;
  pen p2 = col3 + 1.25;
  pen p3 = col1 + 0.5;
  
  //if (H<N+1)
  //draw(subpath(BMM,BMMstop,length(BMM)),p2);

  pair tau = point(BMM,BMMstop);
  pair tau2 = point(BMM2,length(BMM2));

  //draw((B0,psiinv(tau.y))--(tau.y,psiinv(tau.y)),p2+dashed);
  
  pen q = black + 0.5;

  //Label L = Label("$\Psi_{\mu}(B_t^0)$",UnFill);

  real eps1 = 0.05;

  real eps2 = 0.15;

  //draw((B0,B0)--(xmax-eps1,xmax-eps1));

  draw((0,tau.y)--tau,lightgray);
  draw((0,tau.y)--tau,dashed);

  draw(BMM,p);
  draw(BMM2,p2);
  draw(BM1Ext,p3);
  dot((final1.x+3*eps,0.4),col1);
  dot(tau,col2);
  dot(tau2,col3);
  draw((0,xmin)--(0,xmax+eps1),q,Arrow);
  draw((0,0)--((ymax+eps1),0),q,Arrow);
  label("$B_t$",(0,xmax+eps1),(0,1));
  //label("$\sup_{s \le t} B_t$",(0,(xmax+eps1)),(0,1));
  label("$\ol{B}_t$",((ymax+eps1),0),S);

  draw((0,M1)--(M1,M1),dashdotted+col2+0.5);
  draw((0,M2)--(M2,M2),dashdotted+col3+0.5);

  label("$f(s)$",(0,xfinal),W);
  label("$\bar{g}$",(0,M2),W);
  label("$\overline{f\oplus h}$",(0,M1),W);
  label("$g$",final2,SW);
  label("$f$",final1,SW);

  transform xshift = shift(xmax*1.8,0);

  //draw(xshift*((B0,B0)--(xmax-eps1,xmax-eps1)));

  draw(xshift*( (0,tau.y)--tau),lightgray);
  draw(xshift*( (0,tau.y)--tau),dashed);

  draw(xshift*BMM,p);
  draw(xshift*BMM2,p2);
  draw(xshift*BM2Ext,p3);
  dot(xshift*(0.6,0.4),col1);
  dot(xshift*tau,col2);
  dot(xshift*tau2,col3);
  draw(xshift*( (0,xmin)--(0,xmax+eps1)),q,Arrow);
  draw(xshift*( (0,0)--((ymax+eps1),0)),q,Arrow);
  label("$B_t$",xshift*( (0,xmax+eps1)),(0,1));
  //label("$\sup_{s \le t} B_t$",(0,(xmax+eps1)),(0,1));
  label("$\ol{B}_t$",xshift*( ((ymax+eps1),0)),S);

  draw(xshift*((0,M1)--(M1,M1)),dashdotted+col2+0.5);
  draw(xshift*((0,0.6)--(0.6,0.6)),dashdotted+col1+0.5);

  label("$f(s)$",xshift*(0,xfinal),W);
  label("$\bar{f}$",xshift*(0,M1),W);
  label("$\overline{g\oplus h}$",xshift*(0,0.6),W);
  label("$g$",xshift*final2,SW);
  label("$f$",xshift*final1,SW);

  draw((xmax+2*eps1,2*xmax/3){(1,0.5)}..shift(-2*eps1,0)*shift(xmax*1.6)*(0,2*xmax/3),q,Arrow);
  
\end{asy}
\caption{The stop-go pairs for the Az\'ema-Yor embedding. On the left,
the blue path $(g,t)$ is stopped, and the green path, $(f,s)$, is allowed to
continue; a possible continuation, $h$, being shown in red. On the right
hand side we see the effect of allowing $g$ to go and stopping $f$: the maximum of $g$ is increased, but the maximum of $f$
stays the same.\label{fig:AYSG}}
\end{figure}

Indeed, pick $((f,s),(g,t))\in S\times S$ with $f(s)=g(t)$ and $ \ol g< \ol f$ and a stopping time $\sigma$ with positive and finite expectation. Then  \eqref{BPIneq1} amounts to
$$\textstyle\E\big[\bar f \vee (f(s)+\bar B_\sigma)\big] + \ol g \leq \ol f + \E\big[\bar g \vee (g(t) + \bar B_\sigma)\big]$$
with a strict inequality unless $\bar g  \geq g(t) + \bar B_\sigma$ a.s. However in that case \eqref{BPIneq2} is trivially satisfied and \eqref{BPIneq3} amounts to 
\begin{align*}\textstyle
\E\Big[\phi(\bar f) (f(s)+B_\sigma)^2 \Big] + \phi(\bar g ) g(t)^2  >\phi(\bar f )f(s)^2 +  \E\Big[\phi(\bar g) (g(t)+B_\sigma)^2 \Big]   
\end{align*}
which holds since $g(t)=f(s)$.
Summing up, $((f,s),(g,t))\in \BP \subseteq \SSG$ in the former case and $((f,s),(g,t))\in \SSG$ in the latter case, proving \eqref{AYBadPairs}.

In complete analogy with the derivation of the Root embedding (Theorem~\ref{RootThm}) we define
\begin{align*}
  \mathcal{R}_\cl  := \left\{(m,x): \exists (g,t) \in \Gamma, \ol{g} \le m, g(t) = x\right\},\ 
  \mathcal{R}_\op  := \left\{(m,x): \exists (g,t) \in \Gamma, \ol{g} < m, g(t) = x \right\},
\end{align*}
and write $\tau_\cl, \tau_\op$ for the first times the process $(\ol
B_t(\omega),{B}_t(\omega))$ hits the sets $\mathcal{R}_\cl$ and
$\mathcal{R}_\op$ respectively.  Then we claim $\tau_\cl \le \tau_{AY}
\le \tau_\op$ a.s.  Note that $\tau_\cl \le \tau_{AY}$ holds by
definition of $\tau_\cl.$ To show $\tau_{AY} \le \tau_\op$, consider  $\omega$ satisfying $((B_s(\omega))_{s\leq\tau_{AY}(\omega)},\tau_{AY}(\omega))\in \Gamma$ and assume for contradiction that $\tau_\op(\omega)<\tau_{AY}(\omega).$ Then there exists $s\in \big[\tau_\op(\omega),\tau_{AY}(\omega)\big)$ such that $f:=(B_r(\omega))_{r\leq s}$ satisfies $(\bar f, f(s))\in \mathcal{R}_\op$. Since $s< \tau_{AY}(\omega)$ we have $(f,s)\in \Gamma^<$. By definition of $\mathcal{R}_\op$, there exists $(g,t)\in \Gamma$ such that $f(s)= g(t)$ and $\bar g < \bar f$, yielding a contradiction.

Finally, we define
\begin{equation*}
  \psi_0(m) = \sup\{x: (m,x) \in \mathcal{R}_\cl\}.
\end{equation*}
It follows from the definition of $\mathcal{R}_\cl$ that $\psi_0(m)$ is increasing, and we define the right-continuous function $\psi_+(m) = \psi_0(m+)$, and the left-continuous function $\psi_-(m) = \psi_0(m-)$. It follows from the definitions of $\tau_{\op}$ and $\tau_{\cl}$ that:
\begin{equation*}
  \tau_+ := \inf\{t \ge 0: B_t \le \psi_+(\ol{B}_t)\} \le \tau_{\cl} \le \tau_{\op} \le  \inf\{t \ge 0: B_t < \psi_-(\ol{B}_t)\} =: \tau_-.
\end{equation*}
It is then easily checked that $\tau_- = \tau_+$ a.s., and the result follows on taking $\psi = \psi_+$.
\end{proof}

\begin{theorem}[The Jacka Embedding, cf.\ \cite{Ja88}]
  \label{thm:Jacka}
    Let $\phi:\R_+\to\R$ be a bounded, strictly increasing right-continuous function.\comment{DC(45) change as suggested throughout} There exists a stopping time $\tau_{J}$ which \emph{maximizes}
  \begin{align*}\textstyle
    \E\Big[\phi\Big(\sup_{t \le \tau} |B_t|\Big)\Big]
  \end{align*}
    over all solutions to \eqref{SkoSol}, and which is of the form \[  \textstyle\tau_{J} = \inf \Big\{ t > 0: B_t \ge \alpha_-\Big(\sup_{s \le t} |B_s|\Big) \text{ and } B_t \le \alpha_+\Big(\sup_{s \le t} |B_s|\Big)\Big\}\] a.s., for some functions $\alpha_+, \alpha_-$, where $\alpha_+$ is increasing, $\alpha_-$ is decreasing, and $\alpha_+(y) \ge \alpha_-(y)$ for all $y > y_0$, $\alpha_-(y) = -\alpha_+(y) = \infty$ for $y < y_0$, some $y_0 \ge 0$.
\end{theorem}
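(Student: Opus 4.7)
The plan is to adapt the Az\'ema--Yor argument of Theorem~\ref{thm:AY}, replacing the running maximum $\ol{f}$ by the running absolute maximum $|f|^*$. I would set the primary objective $\gamma((f,s)) = -\phi(|f|^*)$ and the secondary $\tilde\gamma((f,s)) = \phi(|f|^*)(f(s))^2$; both are $S$-continuous, and by boundedness of $\phi$ together with the second-moment hypothesis on $\mu$ they meet the assumptions of Proposition~\ref{MinimizerExistsP}. A common minimizer $\tau_J$ of $P_\gamma$ and $P_{\tilde\gamma|\gamma}$ therefore exists, and Theorem~\ref{GlobalLocal3} supplies a $\gamma$-monotone Borel set $\Gamma\subseteq S$ with $\P[((B_r)_{r\leq\tau_J},\tau_J)\in\Gamma]=1$.

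The central combinatorial step will be the claim
\[
\SBP \,\supseteq\, \{((f,s),(g,t))\in S\times S : f(s)=g(t),\ |g|^* < |f|^*\}.
\]
For such a pair, set $y:=f(s)=g(t)$ and $X := \sup_{u\leq\sigma}|y+B_u|$ for a stopping time $\sigma$ with $0<\E[\sigma]<\infty$. The primary difference \eqref{BPIneqProb} equals
\[
\E[(\phi(X)-\phi(|g|^*))_+] \,-\, \E[(\phi(X)-\phi(|f|^*))_+],
\]
which is nonnegative by monotonicity of $\phi$ and strictly positive whenever $\P[X>|g|^*]>0$. In the residual case $X\leq |g|^*$ a.s., the inequality $|f|^*>|g|^*\geq X$ forces $|f\oplus h|^* = |f|^*$ and $|g\oplus h|^* = |g|^*$ almost surely, so the primary becomes an equality and the secondary difference collapses to
\[
(\phi(|f|^*)-\phi(|g|^*))\,\E[\sigma] \,>\, 0,
\]
using the martingale property of $B$ and the strict monotonicity of $\phi$. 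This mirrors the role played by the secondary objective in the proof of Theorem~\ref{thm:AY}.

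Following the Root/Az\'ema--Yor template, I would then introduce the barriers in the phase space $(m,x)\in\R_+\times\R$,
\begin{align*}
\mathcal R_\cl & = \{(m,x) : \exists (g,t)\in\Gamma,\ |g|^*\leq m,\ g(t)=x\},\\
\mathcal R_\op & = \{(m,x) : \exists (g,t)\in\Gamma,\ |g|^*< m,\ g(t)=x\},
\end{align*}
with associated first hitting times $\tau_\cl\leq\tau_\op$ of the two-dimensional process $(|B|^*_t, B_t)$. The usual contradiction argument against $\gamma$-monotonicity delivers $\tau_\cl\leq\tau_J\leq\tau_\op$ a.s. Monotonicity of $\mathcal R_\cl$ in the first coordinate furnishes $\gamma_-^0(m)=\inf\{x\geq 0 : (m,x)\in\mathcal R_\cl\}$ and $\gamma_+^0(m)=\sup\{x\leq 0 : (m,x)\in\mathcal R_\cl\}$ (with $\pm\infty$ when the defining set is empty); $\gamma_-^0$ is decreasing, $\gamma_+^0$ is increasing, the constraint $|x|\leq m$ on stopped paths confines them to $[-m,m]$, and the threshold $y_0$ emerges as the smallest $m$ for which $\mathcal R_\cl$ meets one of the half-lines $\{m\}\times[0,\infty)$, $\{m\}\times(-\infty,0]$.

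The principal obstacle, and the step I expect to be the most delicate, is showing $\tau_\cl=\tau_\op$ almost surely and selecting correctly one-sided regularizations $\gamma_\pm$ of $\gamma_\pm^0$ that deliver the precise hitting-time formula claimed. Away from instants at which $|B|^*$ actually increases, one-dimensional Brownian oscillation and the strong Markov property force the open and closed hitting times to coincide, as in Theorem~\ref{RootThm}. At record times of $|B|^*$, the phase process fails to be strong Markov; here I would adapt the concluding argument of the Rost proof (Theorem~\ref{RostThm}), using a total-variation approximation of the law of $B_\eps$ on the event $\{\tau_J>\eps,\ |B_\eps|\text{ strictly inside the current barrier interval}\}$ to reduce to the strong-Markov case, together with the fact that a new record is set only when $B_t=\pm|B|^*_t$, where immediate Brownian oscillation again collapses the two hitting times. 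This is the analogue of the step that closes the Rost and cave constructions and will constitute the main technical hurdle.
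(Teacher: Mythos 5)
Your setup and the combinatorial core match the paper's proof, which itself consists of exactly three ingredients: the inclusion $\SBP \supseteq \{((f,s),(g,t)): f(s)=g(t),\ |f|^*>|g|^*\}$, the barriers $\mathcal R_\cl,\mathcal R_\op$ in the phase space $(|B|^*_t,B_t)$, and the extraction of $\gamma_\pm$. Your primary/secondary objectives, your case analysis for $\SBP$ (strict primary inequality when $\P[X>|g|^*]>0$, secondary inequality $(\phi(|f|^*)-\phi(|g|^*))\E[\sigma]>0$ in the residual case), your barrier definitions, and the sandwich $\tau_\cl\le\tau_J\le\tau_\op$ all coincide with what the paper does. (Two small points: $-\phi(|f|^*)$ is only $S$- lower semi-continuous, not $S$- continuous, since $\phi$ is merely right-continuous — this suffices for Proposition~\ref{MinimizerExistsP}; and for the secondary objective it is cleaner to use an auxiliary \emph{continuous} strictly increasing bounded function, as in the proof of Theorem~\ref{thm:AY}, rather than the given $\phi$.)

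The genuine error is in the last step, the definition of $\gamma_\pm$. The paper takes $\gamma_-(m)=\inf\{x:(m,x)\in\mathcal R_\cl\}$ and $\gamma_+(m)=\sup\{x:(m,x)\in\mathcal R_\cl\}$, i.e.\ the \emph{outer} envelope of the full $x$-section of the barrier; the stopping region at level $m$ is then the interval $[\gamma_-(m),\gamma_+(m)]$ (this is the geometry of Figure~5(b): paths reaching large absolute maxima are stopped in the tails of $\mu$, paths with small absolute maxima are stopped near the centre, so the section $\{x:(m,x)\in\mathcal R_\cl\}$ grows outwards from the centre as $m$ increases). Your $\gamma_-^0(m)=\inf\{x\ge0:(m,x)\in\mathcal R_\cl\}$ and $\gamma_+^0(m)=\sup\{x\le0:(m,x)\in\mathcal R_\cl\}$ are the \emph{inner} envelopes on each half-line, and the region $\{x\ge\gamma_-^0(m)\}\cup\{x\le\gamma_+^0(m)\}$ is the complement of an interval: it contains points strictly above $\gamma_+(m)$ and strictly below $\gamma_-(m)$, which lie in the continuation region of the true optimizer. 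The resulting hitting time therefore stops paths that $\tau_J$ does not stop, and once the section straddles the centre it degenerates entirely. (You were likely misled by the ``or'' in the theorem statement, which is inconsistent with the stated inequality $\gamma_+\ge\gamma_-$ and with the boundary condition $\gamma_-=-\gamma_+=\infty$ below $y_0$; the intended stopping condition is $\gamma_-(|B|^*_t)\le B_t\le\gamma_+(|B|^*_t)$.) The fill-in of gaps inside $[\gamma_-(m),\gamma_+(m)]$ is the legitimate one: a path at $(m,x)$ with $\gamma_-(m)<x<\gamma_+(m)$ has visited $+m$ or $-m$ and hence already crossed a point of $\mathcal R_\cl$ at a level where some stopped path with absolute maximum at most $m$ ends — so it has already hit the barrier. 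Your concluding discussion of $\tau_\cl=\tau_\op$ via one-dimensional oscillation, and the extra care at record times of $|B|^*$, is sound and more explicit than the paper, but it is applied to the wrong filled-in barrier.
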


\begin{proof}
 The proof runs along similar lines to the proof of Theorem~\ref{thm:AY}, when we take $\gamma((f,s)) = -\phi(|f|^*)$ and set $\tilde{\gamma}((f,s)) = \tilde{\phi}(|f|^*)(f(s))^2$ for some bounded and strictly increasing, continuous function $\tilde{\phi}$. Then the statement follows once we see\comment{MH: To be able to write down $\SSG$ we need a $\tilde\gamma$...}
\begin{align*}
\SSG \supseteq \left\{((f,s),(g,t))\in S\times S:f(s)=g(t), |f|^*>|g|^*\right\},
\end{align*}
 define
 \begin{align*}
    \mathcal{R}_\cl & := \left\{(m,x): \exists (g,t) \in \Gamma, |g|^* \le m, g(t) = x\right\}\\
    \mathcal{R}_\op & := \left\{(m,x): \exists (g,t) \in \Gamma, |g|^* < m, g(t) = x\right\}, 
  \end{align*}
    and then take
  $ \alpha_-(m) = \inf \{ x : (m,x) \in \mathcal{R}_\cl\} \text{ and }
    \alpha_+(m)  = \sup\{x:  (m,x) \in \mathcal{R}_\cl\}. $
\end{proof}

\begin{remark}
  We observe that both the results hold for one-dimensional Brownian motion with an arbitrary starting distribution $\lambda$ satisfying the usual convex ordering condition.
\end{remark}

\begin{theorem}[The Perkins Embedding, cf.\ \cite{Pe86}]
  \label{thm:Perkins}
  Suppose $\mu(\{0\}) = 0$. Let $\varphi:\R_+^2\to\R$ be a bounded function which is continuous and strictly increasing in both arguments. There exists a stopping time $\tau_{P}$ which \emph{minimizes}
  \begin{equation*}
    \E\Big[\varphi\Big(\sup_{t \le \tau} B_t,-\inf_{t \le \tau} B_t\Big)\Big]
  \end{equation*}
  over all solutions to \eqref{SkoSol} and which is of the form $\tau_{P} = \inf \big\{ t > 0: B_t \not\in \big(\alpha_+(\ol{B}_t), \alpha_-(\ul{B}_t)\big)\big\}$, for some decreasing functions $\alpha_+$ and $\alpha_-$ which are left- and right-continuous respectively.
\end{theorem}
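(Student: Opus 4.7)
The plan follows the strategy used for Theorems~\ref{thm:AY} and~\ref{thm:Jacka}, adapted to a cost depending on both the running maximum and the running minimum. Take the primary cost $\gamma((f,s)) := \varphi(\ol f, -\ul f)$; this is $S$-lower semicontinuous because $(f,s)\mapsto(\ol f,-\ul f)$ is continuous on $S$ and $\varphi$ is left-continuous and monotone in each argument. Let $\phi_1,\phi_2:\R_+\to\R$ be bounded strictly increasing continuous functions and take the secondary cost $\tilde\gamma((f,s)) := \phi_1(\ol f) + \phi_2(-\ul f) + f(s)^2$ to break ties and enforce minimality. Proposition~\ref{MinimizerExistsP} produces a joint minimizer $\tau_P$ of the primary and secondary problems, and Theorem~\ref{GlobalLocal3} yields a $\gamma$-monotone Borel set $\Gamma\subseteq S$ supporting $\tau_P$.

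The core combinatorial step is to verify the inclusion
\[
\SBP\supseteq\{((f,s),(g,t))\in S\times S:\ f(s)=g(t),\ \ol f\le\ol g,\ \ul f\ge\ul g,\ \text{at least one strict}\}.
\]
For such a pair with $x := f(s) = g(t)$, and any stopping time $\sigma$ with $0<\E[\sigma]<\infty$, set $M=\ol B_\sigma$ and $L=\ul B_\sigma$. Monotonicity of $\varphi$ gives the pointwise bound
\[
\varphi(\ol g\vee(x+M),\,-\ul g\vee -(x+L)) - \varphi(\ol f\vee(x+M),\,-\ul f\vee -(x+L))\ \le\ \varphi(\ol g,-\ul g)-\varphi(\ol f,-\ul f),
\]
with strict inequality on the event $\{x+M>\ol f\}\cup\{-(x+L)>-\ul f\}$. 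Since $\ol f\ge x$, $\ul f\le x$, and Brownian continuation has full support, this event has positive probability, which is enough to deduce the primary SG inequality~\eqref{BPIneqProb}; the degenerate case $\ol f=\ol g$, $\ul f=\ul g$ is handled by the secondary cost $\tilde\gamma$.

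With the SG characterization at hand, the Perkins barrier emerges in Az\'ema--Yor style. Decompose $\Gamma$ into its lower- and upper-stopped strata $\Gamma^- := \{(g,t)\in\Gamma: g(t)\le 0\}$ and $\Gamma^+ := \{(g,t)\in\Gamma: g(t)\ge 0\}$ and define
\begin{align*}
\mathcal R_\cl^- &:= \{(m,x):\ x\le 0,\ \exists (g,t)\in\Gamma^-,\ \ol g\le m,\ g(t)=x\},\\
\mathcal R_\cl^+ &:= \{(\ell,x):\ x\ge 0,\ \exists (g,t)\in\Gamma^+,\ \ul g\ge\ell,\ g(t)=x\},
\end{align*}
with associated barrier functions $\gamma_+(m) := \sup\{x : (m,x)\in \mathcal R_\cl^-\}$ and $\gamma_-(\ell) := \inf\{x : (\ell,x)\in \mathcal R_\cl^+\}$ (with the usual $\pm\infty$ conventions when the sets are empty). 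The open and closed versions of the hitting time of the barrier $\{x\le\gamma_+(m)\}\cup\{x\ge\gamma_-(\ell)\}$ sandwich $\tau_P$ by the same argument used in Theorems~\ref{RootThm} and~\ref{thm:AY}. The SG property forces $\gamma_+$ and $\gamma_-$ to be decreasing: if $\gamma_+(m_1)<\gamma_+(m_2)$ for some $m_1<m_2$, a path $(f,s)$ that first reaches maximum $m_2$ and then returns down to a level strictly above $\gamma_+(m_1)$ would lie in $\Gamma^<$ and, paired with a $(g,t)\in\Gamma^-$ witnessing $\gamma_+(m_2)$, produce an SG pair contradicting $\gamma$-monotonicity; symmetric reasoning applies to $\gamma_-$. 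Under $\mu(\{0\})=0$ the final step of Theorem~\ref{RostThm} adapts to show that the two sandwich times coincide almost surely, giving the required barrier representation.

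The principal obstacle is the interaction of \emph{two} barriers through a \emph{single} non-separable cost $\varphi$. For additively separable $\varphi$ the pointwise bound of the second paragraph reduces to two one-dimensional versions already familiar from the AY proof, but for general strictly increasing $\varphi$ the bound is delicate (one may have to exploit the joint distribution of $(\ol B_\sigma,\ul B_\sigma)$ rather than coordinatewise monotonicity, or approximate by separable $\varphi$). A related subtlety is that the secondary cost $\tilde\gamma$ is needed both to discriminate between $\Gamma^+$ and $\Gamma^-$ at their common boundary near $x=0$ and to rule out the doubly-degenerate case $\ol f=\ol g$, $\ul f=\ul g$ that is invisible to the primary problem.
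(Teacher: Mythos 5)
Your identification of the stop-go pairs agrees with the paper's ($\SBP \supseteq \{((f,s),(g,t)):f(s)=g(t),\ (\ol f,-\ul f)<(\ol g,-\ul g)\}$ in the partial order of $\R^2$), but the passage from $\Gamma$ to the barrier has two genuine defects. First, you are missing the key structural step: using $\mu(\{0\})=0$ and the stop-go relation one must first show that $\Gamma$ contains no path with $\ul g < g(t) < \ol g$, i.e.\ every stopped path stops at its running maximum or running minimum. This is what legitimises a two-dimensional phase space. Your decomposition of $\Gamma$ by the \emph{sign} of $g(t)$ does not do this job: $\mathcal R_\cl^-$ records only $(\ol g, g(t))$ and forgets $\ul g$, so when the sandwich argument produces a going path $(f,s)$ and a stopped path $(g,t)$ through the same point, you can compare $\ol f$ with $\ol g$ but have no control of $\ul f$ versus $\ul g$, and no stop-go pair results. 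In the paper's construction the lower barrier uses only paths with $g(t)=\ul g$, and the going path that first hits the open barrier is automatically at its running minimum (first visit to a negative level), so $\ul f=f(s)=g(t)=\ul g$ and the comparison collapses to the maxima. Second, your completion direction is reversed: you take $\ol g\le m$ where it must be $\ol g\ge m$. As written, $\{x:(m,x)\in\mathcal R_\cl^-\}$ grows with $m$, so your $\gamma_+$ is \emph{increasing} rather than decreasing as the theorem requires; worse, hitting the open version of your barrier before $\tau_P$ yields $(g,t)\in\Gamma$ with $\ol g<\ol f$, which is the wrong inequality for a stop-go pair, so there is nothing to contradict in proving $\tau_P\le\tau_\op$. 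Your monotonicity argument for $\gamma_+$ has the same sign error: the pair you exhibit satisfies $\ol f\ge\ol g$.

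A further gap is the secondary functional. With $\tilde\gamma((f,s))=\phi_1(\ol f)+\phi_2(-\ul f)+f(s)^2$, in the case where the primary inequality degenerates to an equality (which, since $\varphi$ is strictly increasing, forces the continuation to leave the extrema of both paths unchanged a.s.), the term $f(s)^2$ contributes $\E[\sigma]$ to \emph{both} sides of \eqref{BPIneq3} and the additive terms cancel, so your secondary inequality reads $0>0$. The quadratic-variation gain must be weighted by the extrema, as in the paper's choice $\tilde\gamma((f,s))=-(f(s))^2\tilde\varphi(\ol f,-\ul f)$, which turns the tie-break into $\tilde\varphi(\ol f,-\ul f)<\tilde\varphi(\ol g,-\ul g)$. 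Finally, two of your stated worries are unfounded: coordinatewise strict monotonicity of $\varphi$ suffices for the pointwise bound (no supermodularity or separable approximation is needed), and the doubly degenerate case $\ol f=\ol g$, $\ul f=\ul g$ is not in the claimed stop-go set, so nothing needs to handle it.
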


\begin{proof} Fix a bounded and strictly increasing continuous function $\tilde\varphi:\R^2_+\to\R$ and consider the continuous functions $\gamma((f,s)) = \varphi(\ol f,-\ul f)$ and $\tilde\gamma((f,s)) = -(f(s))^2 \tilde\varphi(\ol{f},-\ul{f})$. Then \eqref{OptSep2} is well posed and by Theorem~\ref{MinimizerExists2} there exists a minimizer $\tau_{P}$. By Theorem \ref{GlobalLocal3}, pick a $\tilde\gamma|\gamma$-monotone set $\Gamma \subseteq S$ supporting $\tau_{P}$. Note that we may assume that $\Gamma$ only contains points such that $\ul{g} < 0 < \ol{g}$, since $\mu(\{0\}) = 0$.

  By a similar argument to that given in the proof of Theorem \ref{thm:AY} we can show
  $$\SSG \supset \{((f,s),(g,t))\in S\times S: f(s)=g(t), (\ol f, - \ul f)< (\ol g, -\ul g)\},$$
  where $(\ol f, - \ul f)< (\ol g, -\ul g)$ iff $(\ol f, - \ul f)\leq (\ol g, -\ul g)$ but not $(\ol f, - \ul f)= (\ol g, -\ul g)$ and$(\ol f, - \ul f)\leq (\ol g, -\ul g)$ refers to the partial order of $\R^2$.

  In addition, consider a path $(g,t) \in S$ such that $\ul{g} < g(t) < \ol{g}$. Then there exists $(f,s) \in S$ such that $f(r) = g(r)$ for $r \le s$, and such that $f(s) = g(t)$, and exactly one of $\ol{f} = \ol{g}$, or $\ul{f} = \ul{g}$. This is true since there must exist a last time that $g(r) = x$ before setting the most recent extremum. In particular, $((f,s),(g,t)) \in \SSG$. It follows that $\Gamma \cap \{(g,t): \ul{g} < g(t) < \ol{g}\} = \emptyset$, that is, any stopped path must stop at a minimum or a maximum.

Now consider the sets:
\begin{align*}
  \mathcal{R}_\cl & = \textstyle\left\{(m,x): \exists (g,t) \in \Gamma, g(t) = x = \ul{g}, \ol{g} \ge m\right\} \cup \left\{(x,i): \exists (g,t) \in \Gamma, g(t) = x = \ol{g}, \ul{g} \le i\right\}\\
  & = \ul{\mathcal{R}}_\cl \cup \ol{\mathcal{R}}_\cl\\
  \mathcal{R}_\op & =\textstyle \left\{(m,x): \exists (g,t) \in \Gamma, g(t) = x = \ul{g}, \ol{g} > m\right\} \cup \left\{(x,i): \exists (g,t) \in \Gamma, g(t) = x = \ol{g}, \ul{g} < i\right\}\\
  & = \ul{\mathcal{R}}_\op \cup \ol{\mathcal{R}}_\op,
\end{align*}
and their respective hitting times by $(\ol{B}_t,\ul{B}_t)_{t\geq 0}$,
denoted $\tau_\cl, \tau_\op$. Since $\Gamma \cap \{(g,t): \ul{g} <
g(t) < \ol{g}\} = \emptyset$, it follows that $\tau_\cl \le \tau_P$
a.s. In addition, an essentially identical argument to that used in the proof of Theorem~\ref{thm:AY} gives $\tau_{P} \le \tau_\op$ a.s.

We now set
  $\alpha_+(m)  = \sup\{x < 0 : (m,x) \in \ul{\mathcal{R}}_\cl\}$, 
 $ \alpha_-(i)  = \inf\{x > 0 : (x,i) \in \ol{\mathcal{R}}_\cl\}.$ 
Then these functions are both clearly decreasing and left- and right-continuous respectively, by definition of the respective sets $\ul{\mathcal{R}}_\cl, \ol{\mathcal{R}}_\cl$. Moreover, it is immediate that 
\[\textstyle \tau_\cl = \inf \left\{ t > 0: B_t \not\in \left(\alpha_+(\ol{B}_t), \alpha_-(\ul{B}_t)\right)\right\},\] 
and we deduce that $\tau_\cl = \tau_\op$ a.s.~ by standard properties of Brownian motion. The conclusion follows.
\end{proof}

\begin{theorem}[Maximizing the range]
  \label{thm:range}
 Let $\varphi:\R_+^2\to\R$ be a bounded function which is continuous and strictly increasing  in both arguments. There exists a stopping time $\tau_{xr}$ which \emph{maximizes}
  \begin{equation*}
    \E\Big[\phi\Big(\sup_{t \le \tau} B_t, -\inf_{t \le \tau} B_t\Big)\Big]
  \end{equation*}
  over all solutions to \eqref{SkoSol}, and which is of the form $\tau_{xr} = \inf \big\{ t > 0: B_t \ge \alpha_-(\ol B_t,-\ul{B}_t)\ \text{or}$ $\ B_t \le \alpha_+(\ol{B}_t,-\ul B_t)\big\}$ for some right-continuous functions $\alpha_-(m,i)$ decreasing in both coordinates and $\alpha_+(m,i)$ increasing in both coordinates.
\end{theorem}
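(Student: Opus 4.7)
The plan is to mirror the proof of the Perkins embedding (Theorem~\ref{thm:Perkins}) with the stopping and continuation roles reversed---here we seek to stop strictly in the middle of the current range, not at its extremes. I would set $\gamma(f,s) = -\phi(\ol f, -\ul f)$; since $\phi$ is bounded, increasing and right-continuous in each variable, it is upper semi-continuous on $\R_+^2$, so $\gamma$ is $S$-lower semi-continuous and bounded, and Proposition~\ref{MinimizerExistsP} yields a minimizer of $P_\gamma$, equivalently a maximizer of $\E[\phi(\ol B_\tau, -\ul B_\tau)]$. To isolate an optimizer with tractable geometry, I would fix a bounded, continuous, strictly increasing $\tilde\phi:\R_+^2 \to (0,\infty)$ and set
\[
\tilde\gamma(f,s) = -\tilde\phi(\ol f, -\ul f)\cdot (\ol f - f(s))(f(s) - \ul f),
\]
which is bounded and $S$-continuous, and whose minimization penalizes stopping near either current extreme. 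By Theorem~\ref{thm:second-maxim-result} I would pick $\tau_{xr}$ simultaneously optimal for $P_\gamma$ and for $P_{\tilde\gamma|\gamma}$, and invoke Theorem~\ref{GlobalLocal3} to obtain a $\gamma$-monotone Borel set $\Gamma \subseteq S$ supporting $\tau_{xr}$.

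The critical technical step is the inclusion
\[
\SBP \supseteq \Bigl\{\bigl((f,s),(g,t)\bigr) \in S\times S : f(s) = g(t),\ (\ol g, -\ul g) \leq (\ol f, -\ul f),\ (\ol g, \ul g) \neq (\ol f, \ul f)\Bigr\},
\]
the natural reversal of the Perkins stop-go set: when $f$'s range strictly dominates $g$'s at a common endpoint, swapping lets the narrower $g$ try to grow while locking in the already-wider $f$, which is favourable for range maximization. In the separable case $\phi(y,z) = \phi_1(y) + \phi_2(z)$ the primary inequality holds pointwise and strictly. For general $\phi$, where multiplicative coupling between the coordinates can degrade the primary inequality to equality (or reverse it on a small set of extensions), I would absorb the degenerate cases with the secondary $\tilde\gamma$, exploiting that the middle-distance product $(\ol f - f(s))(f(s) - \ul f)$ strictly dominates $(\ol g - g(t))(g(t) - \ul g)$ under the stated containment whenever the common position is strictly interior, and that $\tilde\phi$ separates partially-ordered extremes.

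With $\SBP$ in hand, for each $(i,m) \in \R_+^2$ I would define the cross-section
\[
\mathcal{R}_\cl(i,m) = \{x \in \R : \exists\, (g,t) \in \Gamma,\ g(t) = x,\ -\ul g \leq i,\ \ol g \leq m\}
\]
together with an open analogue $\mathcal{R}_\op(i,m)$ using strict inequalities. The monotonicity $\mathcal{R}_\cl(i,m) \subseteq \mathcal{R}_\cl(i',m')$ for $(i,m) \leq (i',m')$ is immediate, and a prefix-of-path argument combined with the $\SBP$-exclusion shows each cross-section is an interval in $[-i,m]$: if $x_1 < x_2$ both belong but some $x \in (x_1,x_2)$ does not, any Brownian trajectory through $x$ with extremes in $[-i,m]$ yields a $\Gamma^<$-element whose pairing with a stopping at $x_1$ or $x_2$ violates $\gamma$-monotonicity. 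Setting $\gamma_-(i,m) = \inf \mathcal{R}_\cl(i,m)$ and $\gamma_+(i,m) = \sup \mathcal{R}_\cl(i,m)$, and upgrading to right-continuous modifications via $\mathcal{R}_\op$, the monotonicity of the cross-sections yields $\gamma_-$ decreasing and $\gamma_+$ increasing in both arguments. The final sandwich $\tau_\cl \leq \tau_{xr} \leq \tau_\op$ a.s., together with equality $\tau_\cl = \tau_\op$ a.s.\ by standard Brownian path regularity near level sets of $\gamma_\pm$, then proceeds exactly as in the proofs of Theorems~\ref{thm:AY} and~\ref{thm:Perkins}.

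The hardest part will be the rigorous verification of the $\SBP$-inclusion for general non-separable $\phi$, since the primary pointwise stop-go inequality can actually reverse along extensions that multiplicatively amplify one coordinate against the other path's already-large extreme. The secondary $\tilde\gamma$---a product of the two current middle-distances modulated by a strictly monotone factor in the extremes---is the device that converts these degenerate cases into a strict secondary inequality, but confirming that the integrated secondary condition is strict under the conditional randomized stopping law $\nu^{(f,s)}$ induced by the optimizer will require a careful case analysis paralleling, but substantially more delicate than, the one used in the Perkins case.
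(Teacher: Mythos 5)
Your overall architecture is the same as the paper's: the same primary functional $\gamma(f,s)=-\varphi(\ol f,-\ul f)$, a secondary minimization to break ties, Theorem~\ref{GlobalLocal3} to produce a $\gamma$-monotone $\Gamma$, the same candidate stop-go set $\{f(s)=g(t),\ (\ol f,-\ul f)>(\ol g,-\ul g)\}$, and a closed/open barrier sandwich in the phase space $(-\ul B_t,\ol B_t,B_t)$. Your secondary functional $-\tilde\phi(\ol f,-\ul f)(\ol f-f(s))(f(s)-\ul f)$ is a legitimate variant of the paper's $(f(s))^2\tilde\varphi(\ol f,-\ul f)$: in the tie case where the extension changes neither path's extremes, both reduce, via $\E[h_\sigma]=0$ and $\E[h_\sigma^2]=\E[\sigma]$, to the strict comparison $\tilde\phi(\ol f,-\ul f)>\tilde\phi(\ol g,-\ul g)$.

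There are, however, two genuine gaps. First, your plan to let the secondary ``absorb'' extensions along which the primary inequality \emph{reverses} is incompatible with the definition of $\SBP$: a pair belongs to $\SBP$ only if the weak primary inequality holds for \emph{every} admissible $\sigma$ (resp.\ for $\nu^{(f,s)}$), and the secondary is consulted only in the case of exact equality. If for some $\sigma$ the primary inequality strictly reverses, the pair is simply not in $\SBP$, and no choice of $\tilde\gamma$ can repair this. And such reversals do occur for non-submodular $\varphi$: take $\varphi(y,z)=yz$ (suitably truncated to be bounded), $f(s)=g(t)=0$, $\ol f=2$, $\ol g=1$, $\ul f=\ul g=-1$, and $\sigma$ the exit time of $(-2,\eps)$ for small $\eps$; then the increment of $\varphi(\ol{\,\cdot\,},-\ul{\,\cdot\,})$ along the extension is essentially twice as large for $f$ as for $g$, so the required inequality fails strictly. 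So the inclusion itself has to be \emph{proved}, not patched by the secondary; the paper asserts it only ``by a similar argument'' to Theorem~\ref{thm:AY}, and this is precisely the point where that analogy is not automatic.

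Second, your argument that each cross-section $\mathcal R_\cl(i,m)$ is an interval is invalid: you pair a going path at level $x$ with a stopped path at a \emph{different} level $x_1\neq x$, but stop-go pairs require $f(s)=g(t)$, so no element of $\SBP\cap(\Gamma^<\times\Gamma)$ is produced. The paper avoids having to prove interval-ness of the raw stopping set altogether: it defines $I(\ul b,\ol b)$ as the \emph{convex hull} of $\{g(t):(g,t)\in\Gamma,\ \ul g=\ul b,\ \ol g=\ol b\}$, takes $\gamma_\pm$ to be its endpoints (which are attained, or approximated, by actual stopped paths with exactly those extremes, so that the stop-go contradiction can be run at a common level), and then proves the nesting $I(\ul b,\ol b)\subseteq I(\ul d,\ol d)$ for $\ul d\le\ul b\le\ol b\le\ol d$ from $\gamma$-monotonicity. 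You should replace your cross-section construction with this convex-hull device; the rest of your sandwich argument $\tau_\cl\le\tau_{xr}\le\tau_\op$ then goes through as in Theorems~\ref{thm:AY} and~\ref{thm:Perkins}.
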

\begin{proof} 
 Our primary objective will be to minimize $\gamma((f,s)) = -\varphi(\ol{f},-\ul{f})$, which is a lsc function on $S$.  We again introduce a secondary minimization problem: specifically, we consider the function $\tilde\gamma((f,s)) = (f(s))^2 \tilde\varphi(\ol{f},-\ul{f})$ for some bounded, continuous and strictly increasing function $\tilde\varphi:\R_+^2\to\R$.  Then \eqref{OptSep2} is well posed and by Theorem~\ref{MinimizerExists2} there exists a minimizer $\tau_{xr}$.  By Theorem \ref{GlobalLocal3}, pick a $\tilde\gamma|\gamma$-monotone set $\Gamma\subset S$ supporting $\tau_{xr}.$

   By a similar argument to that given in the proof of Theorem \ref{thm:AY} we can show $\SSG \supset \{((f,s),(g,t))\in S\times S: f(s)=g(t), (\ol f, - \ul f)> (\ol g, -\ul g)\}$.

    Let $\conv$ denote the convex hull, and write
    \begin{align*}
      I_{\cl}(\ol{b},-\ul{b}) & := \conv\left\{x: \exists (g,t) \in
                                \Gamma, g(t) = x, (\ol{g},-\ul{g})
                                \leq (\ol{b},-\ul{b})\right\}, \\
      I_{\op}(\ol{b},-\ul{b}) & := \conv\left\{x: \exists (g,t) \in
                                \Gamma, g(t) = x, (\ol{g},-\ul{g})
                                <(\ol{b},-\ul{b})\right\}.
    \end{align*}
Then $I_{\cl}, I_{\op}$ are both increasing in both coordinates, and $I_{\cl} \supset I_{\op}$. Write $\tau_{\op} := \inf\{t \ge 0: B_t \in I_{\op}(\ol{B}_t,-\ul{B}_t)\}$, and $\tau_{\cl} := \inf\{t \ge 0: B_t \in I_{\cl}(\ol{B}_t,-\ul{B}_t)\}$. As previously, we deduce that $\tau_{\cl} \le \tau_{xr} \le \tau_{\op}$. If, in addition, we define
    \begin{align*}
      \alpha_+(m,i) & := \sup I_{\op}(m,i)\quad & \alpha_-(m,i) & := \inf I_{\op}(m,i)\\
      \alpha_{+,\cl}(m,i) & := \sup I_{\cl}(m,i)\quad & \alpha_{-,\cl}(m,i) & := \inf I_{\cl}(m,i)
    \end{align*}
    then $\alpha_+, \alpha_-$ satisfy the conditions of the theorem, and 
    \begin{align*}
      \tau_{\op} & = \inf \left\{ t \ge 0 : B_t \ge \alpha_-(\ol B_t,-\ul{B}_t) \mbox{ or } B_t \le \alpha_+(\ol{B}_t,-\ul B_t)\right\}\\
      \tau_{\cl} & = \inf \left\{ t \ge 0 : B_t \ge \alpha_{-,\cl}(\ol B_t,-\ul{B}_t) \mbox{ or } B_t \le \alpha_{+,\cl}(\ol{B}_t,-\ul B_t)\right\}.
    \end{align*}
    To conclude, we need to show that $\tau_{\op} = \tau_{\cl}$. However, we first observe that $\tau_{\op} \ge \sigma$, and $\tau_{\cl} \ge \sigma_{\cl}$, where
    \begin{align*}
      \sigma & := \inf \left\{ t \ge 0 : \alpha_-(\ol B_t,-\ul{B}_t)<\infty \mbox{ or } \alpha_+(\ol{B}_t,-\ul B_t)>-\infty\right\}\\
      \sigma_{\cl} & := \inf \left\{ t \ge 0 : \alpha_{-,\cl}(\ol B_t,-\ul{B}_t)<\infty \mbox{ or } \alpha_{+,\cl}(\ol{B}_t,-\ul B_t)>-\infty\right\}, 
    \end{align*}
    and in fact, $\sigma = \sigma_{\cl}$ a.s. In addition, on $\{\sigma >0\}$ we have $B_{\sigma} \in \{\ol{B}_\sigma, \ul{B}_\sigma\}$. On the set $\{B_\sigma = \ol{B}_\sigma\}$ say, then
    \begin{align*}
      \tau_{\op}  = \inf \{ t \ge \sigma: B_t \le \alpha_{+}(\ol{B}_t, -\ul{B}_\sigma)\}
       = \inf \{ t \ge \sigma: B_t \le \alpha_{+,\cl}(\ol{B}_t,
      -\ul{B}_\sigma)\} \quad a.s.
    \end{align*}
    by the same argument as used at the end of the proof of Theorem~\ref{thm:AY}, and the fact that $\alpha_{+}(m+,i) = \alpha_{+,\cl}(m,i)$, by the definition of the sets $I_{\cl}, I_{\op}$
.\end{proof}

\begin{remark}
  We observe that, in the case of Theorem~\ref{thm:range}, the characterization provided would not appear to be sufficient to identify the functions $\alpha_+, \alpha_-$ given the measure $\mu$. This is in contrast to the constructions of Az\'ema-Yor, Perkins and Jacka, where knowledge of the form of the embedding is sufficient to identify the corresponding stopping rule.

On a more abstract level, uniqueness of barrier type embeddings in a two dimensional phase space can be seen as a consequence of Loynes' argument \cite{Lo70}. More precisely, let $A_t$ be some continuous process and suppose that $\tau_1$ and $ \tau_2$ denote the times when $(A_t, B_t)$ hits a closed barrier type set $R_1$ resp.\ $R_2$.  If $\E[ \tau_1], \E [\tau_2] < \infty$ and both stopping times embed the same measure,  the argument presented in Remark \ref{rem:Loynes} shows that $\tau_1=\tau_2$.
\end{remark}

\begin{remark} 
  In Cox and \OB{} \cite{CoOb11b}, embeddings are constructed which maximize certain double-exit probabilities: for example, to maximize the probability that both $\ol{B}_\tau \ge \ol{b}$ and $\ul{B}_\tau \le \ul{b}$, for given levels $\ol{b}$ and $\ul{b}$. In this case, the embedding is no longer naturally viewed as a barrier type construction; instead, it is natural to characterize the embedding in terms of where the paths with different crossing behaviour for the barriers finish (for example, the paths which only hit the upper level may end up above a certain value, or between two other values). However, it is possible, again using a suitable secondary maximization problem, to show that there exists an optimizer demonstrating the behaviour characterizing the Cox-\OB{} embeddings. (Specifically, if we write $H_{b}((f,s)) = \inf\{t\le s: f(t) = b\}$, $\ul{H} = H_{\ul{b}} \wedge H_{\ol{b}}$ and $\ol{H} = H_{\ul{b}}\vee H_{\ol{b}}$ then the secondary maximization problem \comment{DC(47) changed as suggested.}
$$\tilde\gamma((f,s)) =  1/2 ((f(s)-\ul{H}((f,s)))^2 \1_ {\ul{H} \le s} - ((f(s)-\ol{H}((f,s)))^2 \1_{\ol{H} \le s}$$
 is sufficient to rederive the form of these embeddings.)
\end{remark}

\subsection{The Vallois-embedding and optimizing functions of local time}\label{ValoisSection}
In this section we shall determine the stopping rule which solves 
\begin{align}\label{ValloisOpt}
\inf\{\E[h(\LT_\tau)]: \tau \mbox{ solves } \eqref{SkoSol}\}, 
\end{align}
where $\LT$ denotes the local time of Brownian motion at $0$ and $h$ is a convex or concave function. In many ways, the proof of this result will follow the arguments used in the previous section, however in contrast to the functions considered there, $h(\LT)$ is not defined on $S$ in a straightforward way and hence we need to apply some care in fixing our notions. Moreover local time does not have an $S$\!-continuous modification and hence some additional argument is needed to establish that \eqref{ValloisOpt} admits a minimizer.

We say that a $\G$-adapted process $\LT^x$ is a \emph{local time in $x$} if it is a (right-continuous, increasing) compensator of $|B-x|$ and we suppress $x$ in the case of local time at $0$. This determines $\LT^x$ up to indistinguishability (and clearly the choice of $\LT^x$ is irrelevant for \eqref{ValloisOpt}).

For us it is convenient to allow local time to assume the value $+\infty$ on an evanescent set. Using this convention, Theorem 4.1 implies that there exists a Borel function $L^x:S\to [0,\infty]$ such that $L^x\circ r $ is a (right-continuous, increasing) $\F^0$-predictable local time on Wiener space. We will call such a process $L^x$ a \emph{raw local time} in $x$. We note that the value $+\infty$ cannot be avoided here, see \cite{najnudel_2011}.

\begin{lemma}\label{prop:LTequiv}
Let $L$ be a raw local time in $0$. Then there exists a Borel set $A\subseteq \CRo$, $\W(A)=1$ such that for all  
 \begin{equation*}
 (f,s)\in  U = \{(f,s) \in S : \exists \omega \in A, f = (\omega_r)_{r \le s}\}
 \end{equation*}
we have $L(f,s)<\infty$ and 
 \begin{equation} \label{eq:LTattach}
 (g,t)\mapsto L^{(f,s)}(g,t) := L(f\oplus g, s+t )-L(f,s) \end{equation}
  is a raw local time in $-f(s)$. 
\end{lemma}
\begin{proof}
  Write $V$ for the set of all $(f,s)$ such that $L^{(f,s)}$ is not a raw local time. To understand whether $(f,s)\in V$ we need to check whether or not $ (\omega, t)\mapsto |B_{s+t}(f\oplus \omega)|-L^{(f,s)}(r(\omega,t)) $ defines a martingale. Since this is a Borel property, $V\subseteq S$ is Borel. Hence 
  $$\deb(V):=\{\omega: \exists t, r(\omega, t)\in V\}$$ is analytic and
  thus universally measurable. To prove that $\W(\deb(V))=0$ it is sufficient to show this for any given Borel subset of $\deb(V)$. Suppose for contradiction that $\W(E)>0$ for some Borel set $E\subseteq \deb(V)$. By the optional section theorem this implies that there exists an $\F^a$-stopping time $\tau$ such that $\W(\tau<\infty)>0$ and $(\omega, \tau(\omega))\in r^{-1}(V)$ whenever $\tau(\omega)<\infty$. Upon requiring this only a.s.\ we may of course assume that $\tau $ is an $\F^0$-stopping time.

Given $H=G\1_{\ll\tau,\infty\ll}$ for bounded $\F^0_\tau$-measurable $G$ it follows from usual properties of local time that 
$$ t\mapsto (H\cdot (|B|-L\circ r))_t =G\big[(|B|-L\circ r)_{t}-(|B|-L\circ r)_{\tau\wedge t} \big]$$ is a martingale. As $G$ was arbitrary,  $$ (\omega, t)\mapsto |B_{\tau(\omega')+t}(\omega'\lsub{[0,\tau(\omega')]}\oplus \omega)|-L^{(\omega'\lsub{[0,\tau(\omega')]},\tau(\omega'))}(r(\omega,t)) $$ defines a martingale for almost all $\omega'$, $\tau(\omega')<\infty$, contradicting $\W(\deb(V))>0$.

It follows that $\W(\deb(V))=0$, hence we may pick a Borel set $A\subseteq  \deb(V)\complement$ with $\W(A)=1$ such that \eqref{eq:LTattach} holds. 
\end{proof}

Our next goal is to verify that \eqref{ValloisOpt} admits an optimizer.

\begin{lemma}\label{lem:LocalTimes}  Let $L$ be a raw local time, and define local time on $\Omega$ by $\LT_t(\omega):=L\circ r(B(\omega), t)$. Let $ \xi_n, \xi \in \RST(\mu)$ and let $\rho_n, \rho$ be their representatives on $\Omega$ as in Lemma~\ref{lem:equivOpt}.
  If $\xi_n \to \xi$ weakly then $\LT_{\rho_n} \to \LT_{\rho}$ in $L^1(\Omega, \P)$. 
\end{lemma}

\begin{proof}[Proof of Lemma~\ref{lem:LocalTimes}] 
  As a consequence of Proposition~\ref{prop:LTConv} we have that $\rho_n \wedge \rho \to \rho, \rho_n \vee \rho \to \rho$ in probability. By \eqref{DiffRep}, $\int (L\circ r)(\omega,t) \, \xi(d\omega, dt)= \E[\LT_\rho]$.

  For every embedding $\xi'\in \RST (\mu')$, $\xi'(L\circ r) = \E[\LT_{\rho'}]= \int |x| \, d\mu'(x)$ by Lemma~\ref{MinimalLPT}.  Write $ \mu_n$ for the law embedded by $\rho_n \wedge \rho$. Then $\mu_n \to \mu$ weakly, and $\LT_{\rho_n\wedge \rho} \leq \LT_{\rho}$, so (again using Lemma~\ref{MinimalLPT}) $\E[\LT_{\rho_n\wedge \rho}] = \int |x|\, d\mu_n \to \int |x|\, d\mu$ and hence $\E [\LT_{\rho_n\wedge \rho}] \to \E [\LT_{\rho}]$. This implies that $\LT_{\rho_n\wedge \rho} \to \LT_{\rho}$ in $L^1(\Omega, \P)$. Since $\LT_{\rho_n\vee \rho} + \LT_{\rho_n \wedge \rho} = \LT_\rho+ \LT_{\rho_n}$ a.s.\ we also find that $\E [\LT_{\rho_n \vee \rho}] = \E \left[ \LT_{\rho_n}+ ( \LT_{\rho}-\LT_{\rho_n})_+\right] = \E [ \LT_{\rho}]+ \E \left[ ( \LT_{\rho}- \LT_{\rho_n})_+\right] \to  \E [ \LT_{\rho}]$, where we used that $\xi_n,\xi\in\RST(\mu)$.  Thus $\LT_{\rho_n\vee \rho} \to \LT_{\rho}$ in $L^1( \Omega, \P)$.  Combining these results, we see that $\LT_{\rho_n} \to \LT_{\rho}$ in $L^1(\Omega, \P)$.
\end{proof}

\begin{corollary}\label{cor:ValloisOpt}
  Let $h:[0,\infty) \to \R$ be continuous bounded. Then there exists an optimizer for \eqref{ValloisOpt}. Moreover, if $\tilde\gamma(f,s) = e^{-L(f,s)}f^2(s)$ or $\tilde\gamma(f,s) =- e^{-L(f,s)}f^2(s)$ also the secondary minimization problem \eqref{OptSep2} admits a solution.
\end{corollary}
\begin{proof}
  Let $L$ be a raw local time. We first observe that $(\LT_t)_{t \ge 0}:= (L\circ r((B_t)_{t \ge 0},t))_{t \ge 0}$ is (indistinguishable from) the local time of $(B_t)_{t \ge 0}$ on $(\Omega, \G, (\G_t)_{t\geq 0}, \P)$. By Lemma~\ref{lem:equivOpt} there exists a sequence $\xi_1, \xi_2, \xi_3, \ldots \in \RST(\mu)$ such that \[\textstyle V^* = \lim \int h(\LT_t(\omega)) \,\xi_n(d\omega,dt) = \inf\{\E[h(\LT_\tau)]: \tau \mbox{ solves } \eqref{SkoSol}\}.\] Possibly passing to a subsequence $\xi = \lim_n \xi_n$ satisfies $\int h(\LT_t(\omega)) \,\xi(d\omega,dt) = V^*$ by Lemma~\ref{lem:LocalTimes}. Moreover (again by Lemma~\ref{lem:equivOpt}) there exists a $\G$-stopping time $\tau^*$ such that $\E[h(\LT_{\tau^*})] = \int h(\LT_t(\omega)) \,\xi(d\omega,dt)$. Hence, $\Opt_\gamma$ is non-empty and closed.  The second assertion follows by the same argument.
\end{proof}

We are now able to show:
\begin{theorem}
  \label{thm:Vallois} 
  Let $h:[0,\infty] \to \R$ be a bounded, strictly concave function and $\LT$ the local time of $B$ at $0$. 
  \begin{enumerate}
  \item
    There exists a stopping time $\tau_{V-}$ which \emph{maximizes}
    \begin{equation*}
      \E\left[h\left(\LT_\tau\right)\right]
    \end{equation*}
    over the set of all solutions to \eqref{SkoSol}, and which is of the form
    \[\tau_{V-} = \inf \left\{ t > 0: B_t \notin
      \left(\alpha_-\left(\LT_t\right),\alpha_+\left(\LT_t\right)\right)\right\}
    \text{ a.s.,}\]
    for some decreasing function $\alpha_+\geq 0$ and
    increasing  function $\alpha_-\leq 0$.
  \item
    There exists a stopping time $\tau_{V+}$ which \emph{minimizes}
    \begin{equation*}
       \E\left[h\left(\LT_\tau\right)\right]
    \end{equation*}
    over the set of all solutions to \eqref{SkoSol}, and which is of the form
    \begin{align*} \tau_{V+} = Z \wedge \inf \left\{ t > 0: B_t \notin \left(\alpha_-\left(\LT_t\right),\alpha_+\left(\LT_t\right)\right)\right\}, \text{ a.s.}
    \end{align*}
    for some increasing function $\alpha_+\geq 0$, and some decreasing  function $\alpha_-\leq 0$, and a $\{0,\infty\}$-valued  $\G_0$-measurable random variable $Z$.
  \end{enumerate}
\end{theorem}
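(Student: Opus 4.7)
The proof follows the template of the preceding theorems in this section. For part~(1), set $\gamma(f,s)=-h(L_s(f))$ so that maximizing $\E[h(L_\tau)]$ becomes the minimization problem $P_\gamma(\mu)$; for part~(2) take $\gamma(f,s)=h(L_s(f))$. To pin down the precise form of the barrier I supplement the primary problem with a bounded continuous secondary functional involving $f(s)$ and $L_s(f)$ (e.g.\ $\tilde\gamma(f,s)=\pm f(s)^{2}\tilde h(L_s(f))$ with $\tilde h$ bounded and strictly monotone) in order to break ties among primary optimizers. Since local time is not $S$-continuous, Proposition~\ref{MinimizerExistsP} does not apply directly, but Corollary~\ref{cor:ValloisOpt} provides existence of a minimizer of $P_\gamma$ and closedness of $\Opt(\gamma,\mu)$, and the same compactness yields a joint optimizer $\hat\tau$ of $P_{\tilde\gamma\,|\,\gamma}$. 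Theorem~\ref{GlobalLocal3} then supplies a $\gamma$-monotone Borel set $\Gamma\subseteq S$ supporting $\hat\tau$.

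\textbf{Stop-go analysis.} Fix $f(s)=g(t)=x$ and a stopping time $\sigma$ with $0<\E[\sigma]<\infty$, and let $\Delta$ denote the local time at $0$ accumulated by a Brownian motion started at $x$ run until $\sigma$; the reflection principle gives $\P(\Delta>0)>0$ for every such $x$ and $\sigma$. After cancellation, the inequality \eqref{BPIneqProb} for part~(1) reduces to
\begin{equation*}
  h(L_s(f))-\E[h(L_s(f)+\Delta)]\;>\;h(L_t(g))-\E[h(L_t(g)+\Delta)].
\end{equation*}
Strict concavity of $h$ together with $\P(\Delta>0)>0$ implies that $a\mapsto h(a)-\E[h(a+\Delta)]$ is strictly increasing, so this is equivalent to $L_s(f)>L_t(g)$. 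Hence
\begin{equation*}
  \SBP\;\supseteq\;\{((f,s),(g,t))\in S\times S:\ f(s)=g(t),\ L_s(f)>L_t(g)\},
\end{equation*}
and for part~(2) the analogous computation produces the reversed inequality $L_s(f)<L_t(g)$.

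\textbf{Barriers and extraction of $\varphi_\pm$.} In the $(\ell,x)$-phase space associated with $(L_t,B_t)$, set (for part~(1))
\begin{align*}
  \mathcal{R}_\cl &=\{(\ell,x):\exists\,(g,t)\in\Gamma,\ L_t(g)\le\ell,\ g(t)=x\},\\
  \mathcal{R}_\op &=\{(\ell,x):\exists\,(g,t)\in\Gamma,\ L_t(g)<\ell,\ g(t)=x\},
\end{align*}
and let $\tau_\cl\le\tau_\op$ be the corresponding hitting times of $(L_t,B_t)$. The monotonicity argument of Theorem~\ref{thm:AY} yields $\tau_\cl\le\tau_{V-}\le\tau_\op$, and $\tau_\cl=\tau_\op$ a.s.\ follows from continuity of $(L_t,B_t)$ combined with the fact that local time is constant on excursion intervals of $B$ away from $0$. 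That the continuation region at each level $\ell$ is an interval containing $0$ rests on path continuity: for any $(g,t)\in\Gamma$ with $g(t)=x>0$ and $L_t(g)=\ell$, after the last visit of $g$ to $0$ the path passes through every $y\in(0,x)$ at constant local time $\ell$, producing partial paths in $\Gamma^<$; the secondary optimization rules out the degenerate case where these fail to give the claimed interval structure. Defining $\varphi_+(\ell)$ and $\varphi_-(\ell)$ as the right/left endpoints of this interval, monotonicity ($\varphi_+$ decreasing and non-negative, $\varphi_-$ increasing and non-positive) is immediate from the definition of $\mathcal{R}_\cl$. Part~(2) runs symmetrically with barriers of opposite type, producing $\varphi_+$ increasing non-negative and $\varphi_-$ decreasing non-positive.

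\textbf{Atom at zero and main obstacles.} The atom $p=\mu(\{0\})$ in part~(2) is handled by the independent $\F_0$-measurable $Z\in\{0,\infty\}$ with $\P(Z=0)=p$: on $\{Z=0\}$ we stop at time zero (contributing the mass $p\delta_0$ to the embedded law) and on $\{Z=\infty\}$ we apply the monotonicity-principle construction to embed the conditional measure $(\mu-p\delta_0)/(1-p)$. The main obstacles I anticipate are: (i)~the absence of $S$-continuity of local time, which prevents direct appeal to Proposition~\ref{MinimizerExistsP} and is circumvented by Corollary~\ref{cor:ValloisOpt}; (ii)~verifying that the continuation region at each local-time level is a single interval containing $0$, which relies on the path-continuity argument together with the secondary tie-breaker; and (iii)~the a.s.\ equality $\tau_\cl=\tau_\op$, established by standard Brownian arguments in the spirit of Remark~\ref{rem:Loynes} and the proofs of Theorems~\ref{RostThm} and~\ref{thm:Perkins}.
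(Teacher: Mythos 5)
Your overall strategy coincides with the paper's: primary functional $\pm h(L)$, a secondary functional to break ties, existence and closedness of $\Opt(\gamma,\mu)$ via Corollary~\ref{cor:ValloisOpt} (since $L$ is not $S$-continuous), the inclusion $\SBP\supseteq\{f(s)=g(t),\ L(f,s)\gtrless L(g,t)\}$, and a barrier in the $(L_t,B_t)$ phase space with $\varphi_\pm$ read off from $\mathcal{R}_\op$. (The paper's secondary choice for the minimization is $\tilde\gamma(\omega,t)=e^{-L_t}B_t^2$, which is your $f(s)^2\tilde h(L)$ with $\tilde h(l)=e^{-l}$.)

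There is, however, a concrete error in your stop-go verification. The claim that ``the reflection principle gives $\P(\Delta>0)>0$ for every such $x$ and $\sigma$'' is false when $x=f(s)=g(t)\neq 0$: take $\sigma$ to be the exit time of the continuation increment from $(-|x|/2,\,|x|/2)$, so that $0<\E[\sigma]=x^2/4<\infty$ but $f\oplus h$ never returns to level $0$ and $\Delta=0$ a.s. For such $\sigma$ the primary comparison \eqref{BPIneqProb} degenerates to an equality, so the map $a\mapsto h(a)-\E[h(a+\Delta)]$ is constant rather than strictly increasing and the pair is \emph{not} in $\BP$. This is exactly the case the secondary functional is designed to handle: when $\Delta=0$ a.s.\ one has equality in the primary, $\E[(x+B_\sigma)^2]-x^2=\E[\sigma]>0$, and strict monotonicity of $\tilde h$ gives the strict secondary inequality \eqref{BPIneq3} --- the same case split the paper makes in the proof of Theorem~\ref{thm:AY} (strict inequality unless $\bar g\ge g(t)+\bar B_\sigma$ a.s., in which case the secondary takes over). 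With the argument split into the two cases ($\P(\Delta>0)>0$ versus $\Delta=0$ a.s.) your inclusion into $\SBP$ is correct and the rest goes through. One further remark: in part (2) the paper does not impose the $\F_0$-randomization $Z$ a priori as you do, but derives it from the monotonicity principle --- $\Gamma$ cannot contain a path ending at $0$ with positive local time, since such a path together with its own earlier visit to $0$ (which has strictly smaller local time) would form a stop-go pair; hence $(l,0)\notin\mathcal{R}_\op$ and the mass $\mu(\{0\})$ must be stopped at time $0$ by an independent randomization.
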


\begin{proof}
  We consider the second case, under the additional assumption that $0<\mu(\{0\}) <1$, the other cases being slightly simpler.  As above, we let $L$ be a raw local time and observe that $(\LT_t)_{t \ge 0}:= (L\circ r((B_t)_{t \ge 0},t))_{t \ge 0}$ is (indistinguishable from) the local time of $(B_t)_{t \ge 0}$ on $(\Omega, \G, (\G_t)_{t\geq 0}, \P)$.

  Applying Corollary \ref{cor:ValloisOpt} and Theorem~\ref{GlobalLocal3} to the optimizations corresponding to $\gamma(\omega,t) = h(L\circ  r(\omega,t))$ and $\tilde\gamma(\omega,t) = e^{-L\circ r(\omega,t)}\omega^2_t$ we obtain a minimizer $\tau_{V+}$ and a
 $\tilde\gamma|{\gamma}$-monotone set $\Gamma\subset S$ supporting $\tau_{V+}$.

 Recall the set $A \subset \CRo$ given by
 Lemma~\ref{prop:LTequiv}.  By projection the set
 \begin{equation*}
   U = \{(f,s) \in S : \exists \omega \in A, f = (\omega_r)_{r \le s}\}
 \end{equation*}
 is universally measurable and since $\tau_{V+}$ is a finite stopping time, $\P( ((B_t)_{t\leq \tau_{V+}},\tau_{V+}) \in U)=1$. 
 Passing to an appropriate subset if necessary, we may also assume that $U$ is Borel.  We may therefore assume $\Gamma \subseteq U$, and it then also follows that $\Gamma^{<} \subseteq U$.

 By a similar argument to the previous cases we can show that
 \begin{equation}\label{eq:LTSBP}
   \SSG \supset\{((f,s),(g,t))\in U\times U:f(s)=g(t), L(f,s)<
   L(g,t)\},
 \end{equation}
 where Lemma~\ref{prop:LTequiv}
 guarantees that local time of paths is well-behaved following a path-swapping
 operation. In particular, since both $f$ and $g$ belong to $U$, it
 follows that \eqref{eq:LTattach} holds, and \eqref{eq:LTSBP} is a
 direct consequence of this.

 Define the sets
\begin{align*}
  \mathcal{R}_\op & := \left\{(l,x): \exists (g,t) \in \Gamma, g(t) = x, L(g,t) >l \right\},\\
  \mathcal{R}_\cl & := \left\{(l,x): \exists (g,t) \in \Gamma, g(t) = x, L(g,t) \ge l \right\},
\end{align*}
and  the corresponding stopping times 
\begin{align*}
  \tau_\op^*  := \inf\left\{t \ge 0: \LT_t > 0, (\LT_t,B_t) \in \mathcal{R}_{\op}\right\},\ 
  \tau_\cl^*  := \inf\left\{t \ge 0: \LT_t >0,  (\LT_t,B_t) \in \mathcal{R}_{\cl}\right\}.
\end{align*} 
Strictly speaking,  the random times on the right-hand side only define stopping times in the augmented filtration (by the D\'ebut Theorem), however by Theorem~\ref{indistinguishable predictable}, this is sufficient to find almost surely equal $\G$-stopping times.

Since $(\Gamma^{<} \times \Gamma) \cap \SG_2 = \emptyset$ and $(0,0) \in \Gamma^{<}$ ($\Gamma$ contains a non-trivial element since $\mu(\{0\}) < 1$) then $(l,0) \not\in \Gamma$ for any $l \ge 0$. It follows that $\P(\tau_{V+} = 0) = \mu(\{0\})$.

We now consider $\tau_{V+}$ on $\{\tau_{V+} > 0 \}$. Note that $\{\tau>0\}= \{ \LT_\tau >0\}$ a.s., for any stopping time $\tau$ and hence in particular $\{\tau_{V+} > 0 \}= \{ \LT_{\tau_{V+}} >0\}$ a.s. 
Then on $\{\tau_{V+} >0\}$, $\tau_\cl^* \le \tau_{V+} \le \tau_\op^*$ a.s., and hence $\P(\tau_{V+} \le \tau_{\op}^*) = 1$.  Define $\alpha_+(l) = \inf\{ x>0: (l,x) \in \mathcal{R}_\op\}$ and $\alpha_-(l) = \sup\{ x<0: (l,x) \in \mathcal{R}_\op\}$. 

If either of $\alpha_-(\eta) = 0$ or $\alpha_+(\eta) = 0$ for some $\eta>0$, then $\tau_{\op}^* = 0$ a.s. Since $\tau_{V+} \le \tau_\op^*$ and $\P(\tau_{V+} >0) >0$ we must therefore have $\alpha_+(\eta)>0, \alpha_-(\eta)<0$ for $\eta>0$. In addition, $\alpha_+(l)$ is clearly right-continuous and increasing, so it must have at most countably many discontinuities, and similarly for $\alpha_-(l)$. We can write
\begin{equation*}
  \inf\left\{ t: \LT_t>0, B_t \not\in  \left(\alpha_-\left(\LT_t-\right),\alpha_+\left(\LT_t-\right)\right)\right\} \le \tau_{\cl}^* \le \tau_{\op}^* \le \inf\left\{ t: \LT_t>0, B_t \not\in  \left[\alpha_-\left(\LT_t\right),\alpha_+\left(\LT_t\right)\right]\right\}
\end{equation*}
and observe that (by standard properties of Brownian motion) the
stopping times on the left and right are almost surely equal (since
there are at most countably many discontinuities, and $\alpha_+(l)$
and $\alpha_-(l)$ are bounded away from zero on $[\eta,\infty)$  for $\eta>0$). It
follows that $\tau_{V+} = \inf\left\{t: \LT_t >0,  B_t \not\in
  \left(\alpha_-\left(\LT_t\right),\alpha_+\left(\LT_t\right)\right)\right\}
$ on $\{\tau_{V+} > 0\}$, and 
we deduce that $\tau_{V+}$ is zero with probability $\mu(\{0\})$, and,
conditional on being greater than zero,
$ \tau_{V+} =\inf \left\{ t > 0: B_t \notin
  \left(\alpha_-\left(\LT_t\right),\alpha_+\left(\LT_t\right)\right)\right\}$ a.s.
\end{proof}

\begin{remark}
  The arguments above extend from local time at $0$ to a general continuous additive functional $A$. Recalling that $\LT^x$ denotes local time in $x$, $A$ can be represented in the form $A_t:=\int_0^t \LT_s^x\, dm_A(x)$.  Let $f$ be a convex function such that $f'' = m_A$ in the sense of distributions. If $\int f\, d\mu < \infty$, then Lemma \ref{prop:LTConv} still holds with $A$ in place of $\LT$; the above proof is easily adapted to the more general situation.

  In this manner, we deduce the existence of optimal solutions to \eqref{SkoSol} for functions depending on $A$.  By analogy with Theorem \ref{thm:Vallois} this can be used to generate (inverse-/cave-) barrier type embeddings of various kinds. Other generalizations and variants may be considered in a similar manner. We leave specific examples as an exercise for the reader.
\end{remark}

\subsection{Root and Rost Embeddings in Higher Dimensions} \label{sec:root-rost-embeddings}

In this section we consider the Root and Rost constructions of Sections~\ref{sec:root-embedding} and \ref{sec:rost-embedding} in the case of $d$-dimensional Brownian motion with general initial distribution, for $d\ge 2$. In $\R^d$, since Brownian motion is transient, it is no longer straightforward to assert the existence of an embedding. In general, \cite{Ro71} gives necessary and sufficient conditions for the existence of an embedding, and without the additional condition that $\E[\tau] < \infty$. In the Brownian case, Rost's conditions for $d \ge 3$ can be written as follows.
 There exists a stopping time $\tau$ such that $B_0 \sim \lambda$ and $B_\tau \sim \mu$ if and only if for all $y \in \R^d$
\begin{equation}\label{d-potential}\textstyle
  \int u(x,y)\, \lambda(dx) \le \int u(x,y)\, \mu(dx), \text{ where } u(x,y) = 
    |x-y|^{2-d}.
\end{equation}
However, it is not clear that such a stopping time will satisfy the condition \begin{align}\label{d-minimal}\E[\tau] = 1/d \textstyle\left(\int |x|^2\, (\mu-\lambda)(dx)\right).\end{align}
 As a result, it is not straightforward to give simple criteria for the existence of a solution in $\RST(\mu)$. 

In the case $d=2$ it follows from Falkner's results \cite{Fa81} that the Skorokhod problem admits a solution (i.e.\ $\RST(\mu)\neq \emptyset$) if \eqref{d-potential} is satisfied for $u(x,y)=-\ln |x-y|$ and then \eqref{d-minimal} applies.   

In either case, assuming that we do have a solution satisfying \eqref{d-minimal}, then the existence result as well as the monotonicity principle  carry over to the present setup (with identical proofs) and we are able to state the following:

\begin{theorem}
  \label{thm:RootRd}
  Suppose $\RST(\mu)$ is non-empty. If $h$ is a strictly convex function and $\hat{\tau} \in \RST(\mu)$ minimizes $\E[h(\tau)]$ over $\tau \in \RST(\mu)$ then there exists a barrier $\mathcal{R}$ such that $\hat{\tau} = \inf \{ t > 0 : (B_t,t) \in \mathcal{R}\}$ on $\{\hat\tau >0\}$ a.s.
\end{theorem}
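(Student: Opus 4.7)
The plan is to follow the strategy of Theorem~\ref{RootThm} essentially verbatim, with one dimension-specific modification at the very end. Setting $\gamma((f,s))=h(s)$ and invoking the $\R^d$-version of the monotonicity principle (Theorem~\ref{GlobalLocal2}, whose extension to multidimensional Brownian motion and general initial distributions is announced in the introduction and carried out in Sections~\ref{s:mp} and~\ref{sec:feller}), I extract a Borel $\gamma$-monotone set $\Gamma\subseteq S$ with $((B_t)_{t\le\hat\tau},\hat\tau)\in\Gamma$ almost surely. Strict convexity of $h$ yields the very same stop-go characterization as in the one-dimensional proof,
\begin{equation*}
\SG \supseteq \{((f,s),(g,t))\in S\times S : f(s)=g(t),\ t<s\},
\end{equation*}
since this verification depends only on Jensen-type comparisons for $h$ and not on the ambient state space.

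Next, I define the candidate closed and open barriers
\begin{align*}
\mathcal R_\cl &:= \{(s,x)\in\R_+\times\R^d : \exists (g,t)\in\Gamma,\ g(t)=x,\ t\le s\},\\
\mathcal R_\op &:= \{(s,x)\in\R_+\times\R^d : \exists (g,t)\in\Gamma,\ g(t)=x,\ t<s\},
\end{align*}
with hitting times $\tau_\cl,\tau_\op$. Both are barriers (up-closed in the time coordinate) and $\mathcal R_\op\subseteq\mathcal R_\cl$, so $\tau_\cl\le\tau_\op$. The contradiction argument used to prove $\tau_\cl\le\hat\tau\le\tau_\op$ in Theorem~\ref{RootThm} relies only on the stop-go characterization above together with the $\gamma$-monotonicity of $\Gamma$, and so transfers verbatim to the multidimensional setting.

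The main obstacle is the final step, at which the 1D proof concluded $\tau_\cl=\tau_\op$ by exploiting the fact that Brownian motion immediately returns to its starting point — a property that fails for $d\ge 2$ because points are polar. My plan is to close this gap by a Loynes-type uniqueness argument in the spirit of Remark~\ref{rem:Loynes}, adapted to $\R^d$: given two closed barriers $\mathcal R,\mathcal S$ whose hitting times both minimally embed $\mu$, the set $\mathcal R\cup\mathcal S$ is again a closed barrier, and the spatial-decomposition argument of Loynes shows that its hitting time also embeds $\mu$, forcing $\tau_{\mathcal R}=\tau_{\mathcal S}$ a.s. by minimality. Applied to the closed barrier $\overline{\mathcal R}_\cl$, whose hitting time $\sigma$ satisfies $\sigma\le\tau_\cl\le\hat\tau$, and combined with the sandwich $\tau_\cl\le\hat\tau\le\tau_\op$ to argue that $\sigma$ still embeds $\mu$, the minimality of $\hat\tau$ (guaranteed by $\hat\tau\in\RST(\mu)$, equivalently by~\eqref{eq:minimalDefn}) forces $\sigma=\hat\tau$ on $\{\hat\tau>0\}$, so that the choice $\mathcal R := \overline{\mathcal R}_\cl$ completes the proof. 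The technical heart of this final step, which I expect to be the main obstacle, is the verification that $\tau_{\overline{\mathcal R}_\cl}$ still embeds $\mu$; the natural route is to combine the sandwich with potential-theoretic smallness (for Brownian motion in $\R^d$) of the graph-like set $\overline{\mathcal R}_\cl\setminus\mathcal R_\op$.
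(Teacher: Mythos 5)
The first two-thirds of your plan --- extracting a $\gamma$-monotone $\Gamma$, identifying $\SG\supseteq\{((f,s),(g,t)): f(s)=g(t),\ t<s\}$, defining $\mathcal R_\cl,\mathcal R_\op$ and proving the sandwich $\tau_\cl\le\hat\tau\le\tau_\op$ --- is exactly what the paper does, and that part does transfer to $\R^d$ without difficulty. The gap is in your final step, and it is a real one. The Loynes argument of Remark~\ref{rem:Loynes} only compares \emph{two barriers each of whose hitting times is already known to embed $\mu$}; here neither $\tau_\cl$ nor $\tau_\op$ is known a priori to embed $\mu$ --- only $\hat\tau$, which sits between them, is. So the whole burden falls on your claim that $B_{\tau_{\overline{\mathcal R}_\cl}}\sim\mu$ (equivalently, that $\tau_\cl=\tau_\op$ a.s.), and the route you propose for it --- potential-theoretic smallness of $\overline{\mathcal R}_\cl\setminus\mathcal R_\op$ --- is false for general barriers when $d\ge2$. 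That difference set is a graph over the spatial variable (for each $x$ at most one time), and such sets are not polar for space-time Brownian motion; worse, at a point $(s,x)\in\mathcal R_\cl\setminus\mathcal R_\op$ the open barrier may locally consist only of the vertical ray $\{(t,x):t>s\}$, which \emph{is} polar for $d\ge2$, so $\tau_\cl<\tau_\op$ can genuinely occur with positive probability for a generic barrier. The identity $\tau_\cl=\tau_\op$ is a consequence of optimality, not of potential theory, and must be extracted from the structure of $\Gamma$.

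The paper closes this step quite differently. Fix $\delta>0$, set $B^{-\eta}_t=B_{t+\eta}$ and $\tau_\cl^{\eta,\delta}=\inf\{t\ge\delta:(t,B^{-\eta}_t)\in\mathcal R_\cl\}$. Because $\mathcal R_\cl$ is up-closed in time, $\tau_\cl^{\eta,\delta}$ decreases to $\tau_\op^{0,\delta}$ a.s.\ as $\eta\downarrow0$; on the other hand, the total-variation continuity of the law of $B_\delta$ under small time shifts together with the strong Markov property gives $\tau_\cl^{\eta,\delta}\to\tau_\cl^{0,\delta}$ in distribution. Hence $\tau_\cl^{0,\delta}$ and $\tau_\op^{0,\delta}$ have the same law, and being pointwise ordered they are a.s.\ equal; since $\hat\tau$ is sandwiched between them on $\{\hat\tau\ge\delta\}$, letting $\delta\to0$ yields $\hat\tau=\tau_\op$ on $\{\hat\tau>0\}$. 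You would need to replace your Loynes/polarity step by an argument of this kind, or otherwise prove directly that $B_{\tau_\cl}\sim\mu$.
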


The proof of this result is much the same as that of Theorem~\ref{RootThm}, except we no longer show that $\tau_\cl = \tau_\op$. In higher dimensions with general initial laws, it is easy to construct examples where there are common atoms of $\lambda$ and $\mu$, but where the size of the atom in $\lambda$ is strictly larger than the atom of $\mu$. By the transience of the process, it is clear that the optimal (indeed, only) behaviour is to stop mass starting at such a point immediately with a probability strictly between $0$ and $1$, however the stopping times $\tau_\cl$ and $\tau_\op$ will always stop either all the mass, or none of this mass respectively. For this reason, we do not say anything about the behaviour of $\hat\tau$ when $\hat\tau = 0$. Trivially, the above result tells us that the solution of the optimal embedding problem is given by a barrier if there exists a set $D$ such that $\lambda(D) = 1 = \mu(D\complement)$.

\begin{proof}[Proof of Theorem~\ref{thm:RootRd}]
  The first part of the proof proceeds similarly to the proof of Theorem~\ref{RootThm}. In particular, the set of stop-go pairs is given by
$$\BP \supset\{((f,s),(g,t))\in S\times S:f(s)=g(t), s>t\}$$
and we define the sets $\mathcal{R}_\cl, \mathcal{R}_\op$ and the stopping times $\tau_\cl, \tau_\op$ as above. We then fix $\delta>0$, and consider the set $\{\hat\tau\ge \delta\}$.
  Given $\eta \ge 0$, we define $B^{-\eta}_t = B_{t+\eta}$, for $t \ge -\eta$ and set 
\[  \textstyle
\tau^{\eta,\delta}_\cl := \inf\{t \ge \delta: (t,B_t^{-\eta}) \in \mathcal R_\cl\}.\] 
Then $\tau_\cl^{\eta,\delta} \ge \delta$, and for any $\eps >0$, there
exists $\eta >0$ sufficiently small that 
  $d_{TV}(B^{-\eta}_{\delta},B_{\delta}) < \eps,$ where $d_{TV}$ denotes the total variation distance. By the Strong Markov property of Brownian motion, it follows that
   $
     d_{TV}(B^{-\eta}_{\tau_\cl^{\eta,\delta}}, B_{\tau_\cl^{0,\delta}}) < \eps$.
   In particular, the law of $B^{-\eta}_{\tau_\cl^{\eta,\delta}}$ converges weakly to the law of $B_{\tau_\cl^{0,\delta}}$ as $\eta \to 0$. Thus
   \begin{equation*} \textstyle
     \tau_\cl^{\eta,\delta} = \inf \{ t \ge \eta+\delta: (t-\eta,B_t) \in \mathcal{R}_\cl\},
   \end{equation*}
   so $\tau_\cl^{\eta,\delta} \ge \tau_{R}^{0,\delta}$, and moreover, $\tau_\cl^{\eta,\delta} \to \tau_\op^{0,\delta}$ a.s.~as $\eta \to 0$.  Hence, $B^{-\eta}_{\tau_\cl^{\eta,\delta}} \to B_{\tau_\op^{0,\delta}}$ in probability, as $\eta \to 0$, so we have weak convergence of the law of $B^{-\eta}_{\tau_\cl^{\eta,\delta}}$ to the law of $B_{\tau_\op^{0,\delta}}$, and hence $\textstyle{B_{\tau_\op^{0,\delta}} \sim B_{\tau_\cl^{0,\delta}}}$.
   We now observe that, by an essentially identical argument to that in the proof of Theorem~\ref{RootThm}, we must have $\tau_\cl^{0,\delta} \le \hat{\tau} \le \tau_\op^{0,\delta}$ on $\{\hat{\tau} \ge \delta\}$. However, in the argument above, we know that $\tau_\cl^{0,\delta} \le \hat \tau \le \tau_\op^{0,\delta}$, and $\tau_\cl^{\eta,\delta} \to_{\mathcal{D}} \tau_\cl^{0,\delta}$ and $\tau_\cl^{\eta,\delta} \to _{\mathcal{D}} \tau_\op^{0,\delta}$  as $\eta \to 0$ (where $\mathcal{D}$ denotes convergence in distribution). It follows that $\tau_\cl^{0,\delta} =_{\mathcal{D}} \tau_\op^{0,\delta}$ and hence $\tau_\cl^{0,\delta} = \tau_\op^{0,\delta}$ a.s. In particular, $B_{\tau_\cl^{0,\delta}} = B_{\tau_\op^{0,\delta}} = B_{\hat \tau}$ on $\{\hat\tau \ge \delta\}$. Letting $\delta \to 0$ we observe that $\tau_\op^{0,\delta} \to \tau_\op$, and hence the required result holds on taking $\mathcal{R}=\mathcal{R}_\op$.
\end{proof}

We now consider the generalization of the Rost embedding. Recall that $(\min(\lambda, \mu))(A) := \inf_{B \subseteq A} \left(\lambda(B)+ \mu(A\setminus B)\right)$ defines a measure.
\begin{theorem}
  \label{thm:Rost_General}
  Suppose $\lambda, \mu$ are measures in $\R^d$ and $\hat{\tau} \in \RST(\mu)$ maximizes $\E[h(\tau)]$ over all stopping times in $\RST(\mu)$, for a convex function $h: \R_+ \to \R$, with $\E[h(\tau)]<\infty$. Then $\P(\hat{\tau}=0, B_0 \in A) = (\min(\lambda, \mu))(A)$, for $A \in \mathcal{B}(\R)$, and on $\{\hat{\tau}>0\}$, $\hat{\tau}$ is the first hitting time of an inverse barrier.
\end{theorem}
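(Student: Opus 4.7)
The plan is to mirror the proof of Theorem~\ref{RostThm}, with two upgrades needed in the higher-dimensional, general-initial-law setting: the equality $\tau_\cl = \tau_\op$ at positive times must be obtained by the time-shift argument of Theorem~\ref{thm:RootRd} rather than by immediate return, and the initial atom $\nu_0(\cdot) := \P(\hat\tau = 0, B_0 \in \cdot)$ must be pinned down using the $\gamma$-monotone support directly. First I would apply Theorem~\ref{GlobalLocal2} to the minimization problem with $\gamma(f,s) = -h(s)$, which is equivalent to the given maximization; when $h$ is strictly convex the set $\SG$ automatically contains $\{((f,s),(g,t)): f(s) = g(t),\ s < t\}$, and for merely convex $h$ one first reinforces optimality via the secondary minimization of Theorem~\ref{thm:second-maxim-result} against a bounded, strictly convex auxiliary $\tilde h$, recovering the same inclusion. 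This yields a Borel $\gamma$-monotone $\Gamma \subseteq S$ with $r(\hat\tau)(\Gamma) = 1$.

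Define the open and closed inverse barriers $\mathcal R_\op = \{(s,x): \exists (g,t) \in \Gamma,\ g(t) = x,\ s < t\}$ and $\mathcal R_\cl$ (with $s \le t$) and their hitting times $\tau_\op, \tau_\cl$. The pathwise argument of Theorem~\ref{RostThm} then gives $\tau_\cl \le \hat\tau \le \tau_\op$ a.s. To upgrade to $\tau_\cl = \tau_\op$ on $\{\hat\tau > \delta\}$ (and then let $\delta \downarrow 0$), I would adapt the shift trick from the proof of Theorem~\ref{thm:RootRd}: compare the first entry of $\mathcal R_\cl$ after time $\delta$ for the original Brownian motion with the analogous hitting time for the shifted process $B^{-\eta}_t = B_{t+\eta}$, and apply the strong Markov property at time $\delta$ together with total-variation continuity in $\eta$ to force the laws of the endpoints of $\tau_\cl^{0,\delta}$ and $\tau_\op^{0,\delta}$ to coincide, hence $\tau_\cl^{0,\delta} = \tau_\op^{0,\delta}$ a.s.

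The more delicate step, and what I expect to be the main obstacle, is identifying $\nu_0$. One direction is immediate: $\nu_0 \le \lambda$ since $\nu_0(A) \le \P(B_0 \in A)$, and $\nu_0 \le \mu$ since $\hat\tau = 0$ forces $B_{\hat\tau} = B_0$; hence $\nu_0 \le \lambda \wedge \mu$. For the converse I would set
\[ A_1 = \{x : (x,0) \in \Gamma^<\}, \qquad A_2 = \{x : \exists (g,t) \in \Gamma,\ g(t) = x,\ t > 0\}. \]
Any $x \in A_1 \cap A_2$ would yield a pair $((x,0),(g,t)) \in \SG \cap (\Gamma^< \times \Gamma)$, contradicting $\gamma$-monotonicity; hence $A_1 \cap A_2 = \emptyset$. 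Using $r(\hat\tau)(\Gamma) = 1$, every realization with $\hat\tau > 0$ satisfies $((B_r)_{r \le \hat\tau}, \hat\tau) \in \Gamma$ and therefore $(B_0,0) \in \Gamma^<$, so $\lambda - \nu_0$ is carried by $A_1$; symmetrically $\mu - \nu_0$ is carried by $A_2$. Disjointness then gives $(\lambda - \nu_0) \perp (\mu - \nu_0)$, and combining this with $\nu_0 \le \lambda \wedge \mu$ forces the residual $\lambda \wedge \mu - \nu_0$ to vanish, yielding $\nu_0 = \lambda \wedge \mu$. The residual technical point is that $A_1, A_2$ are only analytic in general, being projections of Borel subsets of $\Gamma$, so the pointwise stop-go cancellation must be read modulo null sets; this is handled exactly as in the measurability bookkeeping underlying the proof of the monotonicity principle itself.
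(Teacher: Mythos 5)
Your plan is essentially the paper's own proof: the same stop-go pairs $\{((f,s),(g,t)): f(s)=g(t),\ s<t\}$, the same open/closed inverse barriers with the sandwich $\tau_\cl\le\hat\tau\le\tau_\op$, a time-shift argument giving $\tau_\cl^{0,\delta}=\tau_\op^{0,\delta}$ a.s.\ on $\{\hat\tau\ge\delta\}$, and the observation that a point where some mass goes on at time $0$ while other mass is stopped there at a positive time yields a stop-go pair in $\Gamma^<\times\Gamma$. Your measure-theoretic packaging of the last step ($A_1\cap A_2=\emptyset$, hence $(\lambda-\nu_0)\perp(\mu-\nu_0)$, combined with $\lambda\wedge\mu=\nu_0+(\lambda-\nu_0)\wedge(\mu-\nu_0)$) is just an explicit version of the paper's one-line argument. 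One detail is wrong as written: for an \emph{inverse} barrier the shift must go the opposite way to Theorem~\ref{thm:RootRd}. The paper uses the delayed process $B^{\eta}_t=B_{t-\eta}$, so that on the original clock the condition reads $(u+\eta,B_u)\in\mathcal{R}_\cl$; by left-completeness these sets lie inside $\mathcal{R}_\cl$ and decrease to (essentially) $\mathcal{R}_\op$ as $\eta\downarrow 0$, which is exactly what lets you approximate $\tau_\op^{0,\delta}$ from above by hitting times of closed sets. With your advanced process $B_{t+\eta}$ the shifted closed barriers \emph{grow}, and you approximate $\tau_\cl$ from below rather than $\tau_\op$ from above, so the identification in law fails. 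Finally, your secondary-optimization hedge for merely convex $h$ is sensible (strictness of the stop-go inequality genuinely requires strict convexity, which the paper leaves implicit), but note it quietly strengthens the hypothesis, since it assumes $\hat\tau$ also solves the secondary problem of Theorem~\ref{thm:second-maxim-result}.
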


\begin{proof}
  We follow the proof of Theorem~\ref{RostThm} to recover the set of stop-go pairs given by
$$\BP\supset\{((f,s),(g,t))\in S\times S:f(s)=g(t), s<t\}$$
and the sets $\mathcal{R}_\op$ and $\mathcal{R}_\cl$, and their corresponding hitting times $\tau_\op, \tau_\cl$. For $0 \le \eta \le \delta$, we define in addition the stopping times
  \begin{align*}
    \tau_\cl^{\eta,\delta}  := \inf \{ t \ge \delta: (t,B_t^\eta) \in \mathcal{R}_\cl\},\ 
    \tau_\op^{\eta,\delta}  := \inf \{ t \ge \delta: (t,B_t^\eta) \in \mathcal{R}_\op\},
  \end{align*}
  where $B_t^\eta = B_{t-\eta}$, for $t \ge \eta$.

  It follows from an identical argument to that in the proof of Theorem~\ref{RostThm} that $\tau_\cl^{0,\delta} \le \hat{\tau} \le \tau_\op^{0,\delta}$ on $\{\hat{\tau} \ge \delta\}$. However, by similar arguments to those used above, we deduce that $\tau_\op^{0,\delta}$ and $\tau_\cl^{0,\delta}$ have the same law on $\{\hat{\tau} \ge \delta\}$, and hence that $\hat{\tau} = \tau_\op^{0,\delta}$ on this set, and then by taking $\delta \to 0$, we get $\hat{\tau} = \tau_\op$ on $\{\hat{\tau}>0\}$.

To see the final claim, we note that trivially $\P(\hat{\tau}=0, B_0
\in A) \le (\min(\lambda, \mu))(A)$. If there is strict inequality,
then there exist some paths in $\Gamma$ which start at $x \in A$, and
paths in $\Gamma$ which stop at $x$ at strictly positive time, constituting a stop-go pair and therefore violating the monotonicity principle.
\end{proof}

\begin{remark}
  We observe that the arguments of Remark~\ref{rem:Loynes} can be applied again in this context. However, one needs to be a little more careful, since it is necessary to take the \emph{fine closure} of the barriers with respect to the fine topology for the processes $(t,B_t)_{t\geq 0}$. With this modification in place, the argument of Loynes can be easily adapted to show that the (finely closed versions) of the barriers in Theorems~\ref{thm:RootRd} and \ref{thm:Rost_General} are unique in the sense of Remark~\ref{rem:Loynes}.
\end{remark}

\subsection{An optimal Skorokhod embedding problem which admits only randomized solutions.}

By analogy with optimal transport, we might interpret a `natural stopping time'  (i.e.\ a stopping time wrt to the Brownian filtration) which solves \eqref{IntPri} as a Monge-type solution whereas stopping times which depend on additional randomization are of  Kantorovich-type. With the exception of the Rost solution, all optimal stopping times encountered in the previous section are natural stopping times, and in the Rost case external randomization is only needed at time $0$. One might ask whether the optimal Skorokhod embedding problem always admits a solution $\tau$ which is natural on $\{\tau>0\}$. We sketch an example, showing that this is not the case:
\begin{example}
There exist an absolutely continuous probability $\mu$ and a continuous adapted process $\gamma_t=\gamma((B_s)_{s\leq t})$ with values in $[0,1]$ such that \eqref{IntPri} admits only randomized solutions. 
\begin{proof} 
Define the stopping time $\sigma:= \inf \{t \ge 0: B_t^2 + t^2 \ge 1\}$, the first time the  Brownian path leaves the right half of the unit disc.
Write $(C(0,\sigma), \W_\sigma)$ for the space of continuous functions up to time $\sigma$, equipped with the corresponding projection of Wiener measure. Pick an isomorphism 
$$l:(C(0,\sigma), \W_\sigma)\to ([2,3], \leb)$$ of standard Borel probability spaces. Using some extra randomization (independent of $\F^B$) we define a  stopping time $\tau$ such that
\begin{enumerate}
\item $\tau=\sigma$ with probability $1/2$, 
\item otherwise $\tau$ stops the first time the Brownian path reaches the level $\pm l((B_s)_{s\leq \sigma})$.
\end{enumerate}
We then define $\mu:= \law(B_\tau)$ and pick $\gamma$ to be a function which equals $0$ on paths which are stopped by $\tau$ and is strictly positive otherwise; clearly we can  do this in such a way that  $ \gamma$ has continuous paths. 

Write $\hat\tau$ for the randomized stopping time $\RST(\mu)$ corresponding to $ \tau$. 
It is then straightforward to see that $\hat \tau $ is the unique solution of \eqref{IntPri}. Thus,   the optimal Skorokhod embedding problem admits no (non-randomized) solution in the natural filtration of $B$.
\end{proof}
\end{example}
In optimal transport it is a difficult and interesting problem to understand under which conditions transport problems admit solutions of Monge-type. An interesting subject for future research would be to understand when Monge-type solutions exist for the optimal Skorokhod embedding problem.

\section{Skorokhod Embedding for Feller processes}\label{sec:feller}
In this section we discuss the extension of our results to the embedding problem for a continuous Feller process $Z$, with values in $\R^d$ and  $Z_0\sim \lambda$.
Throughout we suppose that $Z$ is defined on a stochastic basis $(\Omega,\G,(\G_t)_{t\geq0},\P)$ which is sufficiently rich to support  a uniformly distributed $\G_0$-random variable independent of $Z$. 
Given a probability $\mu\in\mathcal P(\R^d)$ the analogue of \eqref{SkoSol} is to construct a stopping time $\tau$ such that 
\begin{align}\label{eq:FellerSEP}\tag{SEP$^Z$}
 Z_\tau \sim \mu, \quad \tau \mbox{ is minimal.}
\end{align}
Recall from \eqref{eq:minimalDefn} that a stopping time $\tau$ is
minimal iff for any stopping time $\tau'$ such that
$Z_{\tau'}\sim Z_{\tau}$ then $\tau'\leq\tau$ implies $\tau'=\tau$ a.s. If $Z$ is a one dimensional Brownian motion and $\mu$ has second moment, minimality of $\tau$ is equivalent to $\E[\tau]<\infty.$ 
Working in higher dimensions with general starting law we redefine
$$\S:=\{(f,s): f\in C([0,s],\R^d) \}.$$  
Given a function $\gamma:\S\to\R$ the optimal Skorokhod embedding problem for $Z$ is to construct a stopping time optimizing
\begin{align}\label{OptSEPZ}\tag{OptSEP$^Z$}
 P_\gamma^Z:=\inf\{\E[\gamma((Z_s)_{s\leq\tau},\tau)] : \tau \mbox{ solves } \eqref{eq:FellerSEP}\}.
\end{align}
(As above, the value of $P_\gamma^Z$ does not depend on the underlying stochastic basis provided it supports a uniformly distributed random variable independent of $Z$.)

Most of the arguments required to establish our main results are abstract and carry over to the present setup.  In fact, only the parts building on the condition $\E[\tau]<\infty$ need to be adjusted to account for the more general condition of $\tau$ being minimal.  Therefore, to establish Theorems \ref{MinimizerExists}, \ref{DualityIntro}, and \ref{GlobalLocal} in the general Feller setup, we need the crucial Assumption \ref{ass:CompactPlus} below which we verify in a number of natural examples in Section \ref{sec:ex}.

\begin{assumption} \label{ass:CompactPlus}
From now on we \emph{assume} that \eqref{eq:FellerSEP} admits a solution and either
\begin{enumerate}
\item[(1)] that there exist continuous functions $h:\R^n\to \R$ and 
$\zeta:S\to\R$  such that: 

\begin{itemize} 
\item $\zeta_t:=\zeta((Z_s)_{s\leq t},t)$ is strictly increasing, $\zeta_0=0$, $\lim_{t\to \infty} \zeta_t=\infty$,  $\P$-a.s.\ and
\item $X_t := h(Z_t)-\zeta_t$ is a martingale and
  $(X^\tau_{ t})_{t\geq 0}$ is uniformly integrable for all $\tau$ solving \eqref{eq:FellerSEP}, or
    \end{itemize}

   \comment{DC(51) changed as suggested. MB: And now rewritten completely.}
 \item[(2)] that whenever $\tau$ is a finite stopping time satisfying $Z_\tau \sim \mu$ then $\tau$ is minimal\comment{DC(52) Notation introduced below Definition ??. }  and there is an increasing function $G:\R_+\to \R$, $\lim_{t\to \infty}G(t) =\infty$ which satisfies
\begin{align}\label{TimeLike2}
\sup\{\E[G(\tau)]: \tau \mbox{ solves \eqref{eq:FellerSEP}}\} =: V <\infty.
\end{align}

\end{enumerate}
\end{assumption}
The existence of a function $G$ such that \eqref{TimeLike2} holds is equivalent to 
\begin{align}\label{TimeLike2.5}
\{\tau: \tau \mbox{ solves \eqref{eq:FellerSEP}}\}  \mbox{ is bounded in probability}.
\end{align}
In fact, it is straightforward to see that we would arrive at an equivalent condition when replacing the deterministic function $G$ by a stochastic process $(\zeta_t)_{t\geq 0}$ as in Case (1). 

Note also that in Case (1) of Assumption \ref{ass:CompactPlus},  $\tau $ with $Z_\tau \sim \mu$ is minimal if and only if 
\begin{align}\label{TimeLike}
\textstyle \E[ \zeta_\tau]  = \int h\,d\mu - \int h\, d\lambda=:V \ \ (< \infty).
\end{align}
Under Assumption~\ref{ass:CompactPlus}, our main results extend to  continuous Feller processes: 
\begin{theorem}\label{ZMinimizerExists}
  If $\gamma:\S\to \R$ is lsc and bounded from below,  \eqref{OptSEPZ} admits a minimizer.
\end{theorem}

\begin{theorem}\label{ZDuality}
  Let $\gamma: S \to \R$ be lsc and bounded from below. Then we have the duality relation $P^Z_\gamma= D^Z_\gamma$ for $D^Z_\gamma :=\sup\int \psi(y) \, d\mu(y)$, where the supremum is taken over all continuous $\psi\in L^1(\mu)$ such that there exists a continuous bounded martingale $M$ with $\E[M_0] = 0$ and a decreasing process $A$ with $\E[A_\tau]\geq 0$ for all solutions $\tau$ of \eqref{eq:FellerSEP} and almost surely for all $t\geq 0$
\begin{align}\label{ZDualConstraint}
M_t + A_t + \psi(Z_t)\leq \gamma((Z_s)_{s\leq t},t).
\end{align}
Moreover, in Case (1) of Assumption~\ref{ass:CompactPlus}, the process $A$ may be assumed to be zero at the expense of assuming that $(M_{\tau \wedge t})_{t\geq 0}$ is only uniformly integrable for all $\tau$ solving \eqref{eq:FellerSEP}.
\end{theorem}

\begin{theorem}\label{ZMP}
  Let $\gamma:\S\to \R$ be Borel measurable.  If \eqref{OptSEPZ} is well posed and $\tau$ is an optimizer, there exists a $\gamma$-monotone Borel set $\Gamma\subset S$ such that $\P$-a.s.
$$((Z_t)_{t\leq \tau}, \tau)\in\Gamma.$$
\end{theorem}
\begin{remark}
  \begin{enumerate}
  \item  Of course, the analogues  of the secondary optimization results, Theorems  \ref{MinimizerExists2} (on existence of a minimizer) and \ref{GlobalLocal3} (monotonicity principle), carry over to the present setup with the obvious changes.
  \item The continuity of $\zeta$ on $S$ which was imposed in Assumption~\ref{ass:CompactPlus} (1) is not required in Theorems \ref{ZMinimizerExists} and \ref{ZMP}.
  \item The condition $0 < \E[\sigma] < \infty$ in
    Definition~\ref{def:SG} should be replaced by considering all
    stopping times with $0 < \E[\zeta((B_s)_{s \leq
      \sigma},\sigma)]<\infty$ in case (1) of
    Assumption~\ref{ass:CompactPlus}, or $0<\E[G(\tau)] < \infty$ in
    case (2).  In addition, the expectation should be taken over the law of the Feller process started at $f(s) = g(t)$.
  \end{enumerate}
\end{remark}

\subsection{Sketch of proofs}\label{FellerProofs}

As in Section 3 we consider the canonical 
 setup $(C(\R_+,\R^d),\cF^0,\Q)$ (where $\Q$ denotes the law of the Feller process) and we write $Y$ for the canonical process.
 It follows from continuity of $Y$ (resp.\ $Z$) and the Feller property that the $\F^a$-optional and the $\F^a$-predictable $\sigma$-algebra on the canonical space agree; similarly Proposition \ref{prop:very cont} on the definition of $S$\!-continuous martingales extends to the present context. 
We define  $\RST$, $\TRST$ and related notions as before with $\Q$ replacing $\W$.  We say that $\xi\in\RST$ is a minimal embedding of $\mu$ if the corresponding stopping time $\rho$ (cf.~\eqref{eq:rhodefn}) on the enlarged probability space $(C(\R_+,\R^d)\times [0,1],\oQ)$ constitutes a minimal embedding. (Representing randomized stopping times as in Theorem 
\ref{thm:equiv RST} (1), the stopping time $\xi$ constitutes a minimal embedding iff there is no randomized stopping time $\xi'\neq \xi$ embedding the same measure which satisfies $A^{\xi'}\geq A^\xi$.) For $\mu\in\mathcal P(\R^d)$ we define $\RST(\mu)$   to be the set of all minimal  randomized stopping times embedding the measure $\mu$.

Recalling the argument from Theorem \ref{ComSol}, we see that the existence of a function $\zeta : S \to \R$ such that $\zeta \circ r$ increases to $\infty$ and $\sup_{\xi\in \RST(\mu)} \xi (\zeta \circ r)<\infty$  implies that $\RST(\mu)$ is compact. (Vice versa, if $\RST(\mu)$ is compact then such a function exists and can be chosen so that $\zeta \circ r$ is deterministic). Hence, by \eqref{TimeLike} resp.\ \eqref{TimeLike2},  $\RST(\mu)$ is compact.

\begin{proof}[Proof of Theorem \ref{ZMinimizerExists}]
 The argument follows the proof of Theorem~\ref{MinimizerExistsP} line by line.
\end{proof}  

\begin{proof}[Proof of Theorem \ref{ZDuality}]
We give the argument in the case $\lambda=\delta_0$ for ease of exposition. Setting $h= \zeta \circ r$ resp.\ $h= G\circ T$ (and using identical arguments as previously) we obtain the following extension of Proposition \ref{super hedging}: 

For $c:\CRo\times \R_+\times \R\to \R \cup \{\infty\}$ lsc, predictable and bounded from below
\begin{align}\label{NewDualityZ} 
 \inf_{\pi  } {\textstyle \int c(\omega,t,y)} \, d\pi(\omega, t,y)=\sup_{(\phi,\psi)} {\textstyle \int \phi \, d\Q+\int \psi\, d\mu} ,
\end{align}
where the infimum is taken over the set 
$\TRST^{1,V} (\mu)=\{\pi\in \TRST^1(\mu): \pi(h)\leq V\}$
 and the supremum is taken over  $\phi\in C_b(\CRo)$, $\psi\in C_b(\R)$ for which
\begin{align*}
\exists{\alpha\geq 0} \mbox{ s.t. } \phi^M_t(\omega)\!+\! \psi (y)\!-\!\alpha(h_t\!-\!V) \leq c(\omega, t, y) \mbox{ for } \omega \in \CRo, t\in\R_+, y\in \R.
\end{align*}
The argument used to derive Theorem \ref{HedgingDual} from Proposition \ref{super hedging} then implies the desired duality relation $P^Z_\gamma=D^Z_\gamma$, with a decreasing process  (in \eqref{ZDualConstraint}) of the form $A_t= -\alpha (h_t-V)$ for some $\alpha \geq0$. In Case (1) of Assumption \ref{ass:CompactPlus}, $A$ can be `hidden' in $M$ / $\psi$ as in \eqref{ToMart}.
\end{proof}

\begin{proof}[Proof of Theorem \ref{ZMP}]
  Apart from the abstract theory the ingredients of the proof of Theorem \ref{GlobalLocal2} are Proposition \ref{Invisible2} and Proposition \ref{KeLe}. The only stage where the proof of Proposition \ref{Invisible2} has to be altered is when establishing that the randomized stopping time $\xii^\pi$ is minimal. Under Assumption \eqref{ass:CompactPlus} (1) this follows using the minimality characterization given in \eqref{TimeLike}, under Assumption \eqref{ass:CompactPlus} (2) this is of course trivial.

  Proposition \ref{KeLe} only uses transport duality, the Feller property to construct $S$\!-continuous martingales and Choquet's capacitability theorem.
\end{proof}

\subsection{Examples}\label{sec:ex}
We now provide a list of Examples in which Assumption \ref{ass:CompactPlus} is satisfied and Theorems  \ref{ZMinimizerExists}, \ref{ZDuality}, and \ref{ZMP} apply. 
\subsubsection{Let $Z$ be a one-dimensional Brownian motion and assume that $\lambda$ and $\mu$ have first moments and are in convex order.
 Then Assumption \ref{ass:CompactPlus} (1) holds.} \label{lem:fin1moment}
\begin{proof}
  By the de la Vall\'ee-Poussin theorem (see e.g.\ \cite[Thm.\ II 22]{DeMeA}) there exists a positive, smooth and symmetric function $F:\R\to\R_+$ with strictly positive, bounded second derivative and $\lim_{x\to\infty} F(x)/x=\infty$ such that $V:=\int F(x)\, \mu(dx) <\infty.$
  We set
$$ \textstyle  \zeta_t := 1/2 \int_0^t F''(Z_s) \, ds\,  $$
and note that $\zeta_t$ increases to $\infty$ since $\Q\big(\int_0^\infty \1_{[-1,1]} (Z_t)\, dt= \infty\big)=1$ and $F''$ is bounded away from $0$ on $[-1,1]$. Using It\^o's formula and our conditions on $F$ we define the martingale
$$\textstyle X_t := F(Z_t)- 1/2\int_0^t F''(Z_s)\,ds
= F(Z_t)-\zeta_t.$$ In the present Brownian case, it is known that the minimality of a finite stopping time $\tau$ is equivalent to $(Z_{\tau \wedge t})_{t\geq 0}$ being a uniformly integrable martingale. This follows (in the case of a general starting law) from Lemma~12 and Theorem~17 of \cite{Co08}.

If $Z_\tau \sim \mu$ and $(Z^\tau_{ t})_{t\geq 0}$ is uniformly integrable, then for each $t$, the law of $Z_{\tau\wedge t}$ is bounded by $\mu$ in the convex order and in particular $\E [F(Z_{\tau\wedge t})] \leq V, t\geq 0$. 
Uniform integrability of $X$ then follows upon noting
$$ \E[\zeta_\tau] = \lim_{t\to \infty} \E[\zeta^\tau_t]  = \lim_{t\to \infty}\E [F(Z^\tau_t)] - \E[F(Z_0)] \leq V - \E[F(Z_0)] < \infty. \qedhere
$$
\end{proof} 

\subsubsection{One-dimensional regular diffusions} 
Let $Z$ be a regular (time-homogeneous) one-dimensional diffusion on an interval $I \subseteq \R$, with inaccessible or absorbing endpoints (see \cite{RoWi00} for the relevant definitions and terminology) and $Z_0 \sim \lambda$, $\lambda(I^\circ)=\mu(I^\circ)=1$. In particular, $Z$ is a continuous Feller process (\cite[Proposition~V.50.1]{RoWi00}). Then (on a possibly enlarged probability space) there exists a scale function $s$ and a continuous, strictly increasing time change $A_t$ such that $B_t = s(Z_{A_t})$ is a Brownian motion up to the exit of $s(I^\circ)$. Recalling the discussion in \cite[Section~5]{CoHo05b}, with the obvious extension of our notation, it is clear that there exists a minimal stopping time $\tau$ embedding $\mu$ in $Z$ if and only if there exists a stopping time $\tau'$ embedding $s(\mu)$ in $B$ such that 
\begin{align}\label{eq:exitbound} 
  \tau' \le \tau_{s(I)} := \inf \{ t \ge 0: B_t \not \in s(I^\circ)\}. 
\end{align}
Moreover, write $A^{-1}_t$ for the inverse of $A_t$, so $A^{-1}_{A_t} = t$. Since $A$ and $A^{-1}$ are continuous and strictly increasing $\tau$ is a minimal embedding of $\mu$ in $Z$ if and only if $\tau':=A^{-1}_\tau$ is a minimal embedding of $s(\mu)$ in $B$.

We now consider three cases. In the first two we verify Assumption \ref{ass:CompactPlus} (2) and in the last case we verify Assumption \ref{ass:CompactPlus} (1) under some additional smoothness assumptions. Subsequently we give some concrete examples.

\begin{itemize} 
\item[(i)] Suppose $s(I^\circ) = (a,b)$ for $a, b \in \R$. Then it follows from \cite[Theorems~17 and 22]{Co08} that  a solution to \eqref{eq:FellerSEP} exists if and only if $s(\lambda)$ precedes $s(\mu)$ in convex order, and in fact, any finite $\tau$ with $Z_\tau \sim \mu$ is minimal.

Moreover we note  that \begin{itemize}
\item $
\{\tau': B_{\tau'}\sim s(\mu),  \tau' \mbox{ is a minimal} \}
$
is bounded in probability
\item $A_t<\infty$ provided the path $(B_s)_{s\leq t}$ stays inside an interval $[c,d]\subseteq (a,b)$.
\item Given $\eps>0$ there exists an interval $[c,d]\subseteq (a,b)$ such that $(B_s)_{s\leq \tau'}$ stays inside $[c,d]$ with probability $> 1-\eps$ for each minimal $\tau'$, $B_{\tau'}\sim s(\mu)$.  
\end{itemize}
It follows that $ \{A_{\tau'}: B_{\tau'}\sim s(\mu), \tau' \mbox{ is minimal} \} $ is bounded in probability, hence \eqref{TimeLike2.5} and then Assumption \ref{ass:CompactPlus} (2) holds.
\item[(ii)] Suppose $s(I^\circ) = (a,\infty)$ for $a \in \R$, and that  $s(\lambda)$ and  $s(\mu)$ are in convex order and that the moments $m_\lambda = \int s(y) \,\lambda(dy)$, $m_\mu = \int s(y)\, \mu(dy)$ exist. Then it follows from Theorems~17 and 22 and the discussion at the top of p.~245 of \cite{Co08} that a solution to \eqref{eq:FellerSEP} exists if and only if for all $x \ge a$, 
\begin{align}\label{FinalMoments}\textstyle-\int |s(y)-x|\,\mu(dy) \le -\int |s(y) - x|\, \lambda(dy) + (m_\lambda-m_\mu)\end{align}
 Again, any finite $\tau$ with $Z_\tau \sim \mu$ is minimal and  \eqref{TimeLike2.5} follows as above. 
 
 An analogous result holds if $s(I^\circ) = (-\infty,b)$ for $b \in \R$.

\item[(iii)] Suppose $s(I^\circ) = (-\infty, \infty)$ and that $s(\lambda), s(\mu)$ are in convex order, $ \int s(y)^2 \,\mu(dy) < \infty$. Then we are in the classical case, and a stopping time $\tau$ with $Z_\tau \sim \mu$ is minimal if and only if $\E[A^{-1}_\tau] < \infty$. If the process $Z$ is sufficiently well-behaved (as in the examples below) one can show that $X_t = s(Z_t)^2 - A_t^{-1}$ is a martingale and that $A^{-1}$ depends continuously on the path $(Z_s)_{s\leq t}$. For all $\tau$ solving \eqref{eq:FellerSEP}, $\E[A^{-1}_\tau] < \infty$; hence $(X^\tau_t)_{t\geq 0}$ is uniformly integrable and Assumption~\ref{ass:CompactPlus} (1) is satisfied.

More generally, when only the integrals $\int s(y) \,\lambda(dy)$,   $\int s(y) \,\mu(dy)$ are finite, (assuming sufficient regularity of the diffusion), Assumption \ref{ass:CompactPlus} (1) follows as in Section \ref{lem:fin1moment}.
\end{itemize}

\begin{remark}
  Observe that none of the constructions described in Sections~\ref{sec:recov-class-embedd} and \ref{ValoisSection} rely on fine properties of Brownian motion --- the main properties used are the continuity of paths, the strong Markov property, and the regularity and diffusive nature of paths (that the process started at $x$ immediately returns to $x$, and immediately enters the sets $(x,\infty)$ and $(-\infty,x)$). It follows that all the given constructions extend to the case of regular diffusions described above.
\end{remark}
\begin{example}[Brownian motion with drift]
Let $Z_t=B_t+at$ for some $a<0$ with $Z_0\sim \lambda$, and $I=(-\infty,\infty)$. Then a possible choice of the scale function is $s(x) = \exp(-2 a x)$.  Let $\lambda, \mu\in\mathcal P(\R)$ be such that $s(\lambda), s(\mu)$  are integrable and satisfy \eqref{FinalMoments}. Then Assumption \ref{ass:CompactPlus} holds by (ii) above.
\end{example}

\begin{example}[Geometric Brownian motion]
  Let $Z$ be a geometric Brownian motion, given through the SDE $dZ_t= Z_tdB_t$, $Z_0\sim \lambda~.$ A possible choice of scale function is $s(x)=x.$ Let $\lambda,\mu\in\mathcal P(0,\infty)$ be such that $s(\lambda),s(\mu)$ are integrable and satisfy the corresponding version of \eqref{FinalMoments}. Then Assumption \ref{ass:CompactPlus} holds by (ii) above. (More general versions of geometric Brownian motion can be treated similarly.) \end{example}

\begin{example}[Three-dimensional Bessel process] 
Let $Z=|B|$ for a three-dimensional Brownian motion $(B_t)_{t\geq 0}$ with $Z_0\sim \lambda.$ A possible choice of scale function is $s(x)=1-1/x,$ and $s(I^\circ)=(-\infty, 1)$. Let $\lambda,\mu\in\mathcal P(0,\infty)$ be such that $s(\lambda),s(\mu)$ are integrable and satisfy the corresponding version of \eqref{FinalMoments}. Then Assumption \ref{ass:CompactPlus} holds by (ii) above. Similar results hold for $d$-dimensional Bessel processes, with $d > 2$.
\end{example}

\begin{example}[Ornstein-Uhlenbeck process]
Let $Z$ be an Ornstein-Uhlenbeck process, given for example as the solution to the SDE $dZ_t = -Z_t \, dt + dW_t, Z_0\sim\lambda$. Then $Z_t$ is a regular diffusion on $I=(-\infty,\infty)$ with scale function given (up to constants) by $s'(x) = \exp(x^2)$, and $s(I^\circ) = (-\infty,\infty)$. Suppose $\lambda, \mu$ are measures on $\R$ such that $s(\lambda), s(\mu)$ are in convex order and $ \int s(y)^2 \,\mu(dy) < \infty$. Then $A_t^{-1} = \int_0^t \exp\{2Z_t^2\} \, ds$ is continuous as a function of $(Z_s)_{s \le t}$, and hence
Assumption \ref{ass:CompactPlus} holds by (iii) above.
\end{example}

\subsubsection{The Hoeffding-Frechet coupling as a very particular Root solution} Let $Z$ be the deterministic process given by $dZ_t= dt$ started in $Z_0 \sim \lambda$. $Z$ is not a regular diffusion, however Assumption~\ref{ass:CompactPlus} (2) is easily checked. Let $\mu$ be another probability and assume for simplicity that $\max \supp \lambda \leq \min \supp \mu$.  Then the Root solution minimizes $\E[\tau^2]$. But note also that since $\tau= Z_\tau-Z_0$, this minimization problem corresponds precisely to finding the joint distribution $(Z_0, Z_\tau)$ which minimizes $\E[(Z_\tau-Z_0)^2]$: the classical transport problem in the most simple setup. Specifically, the Root solution for the particular case of the process $Z$ corresponds precisely to the monotone (Hoeffding-Frechet) coupling. In the same fashion the Rost solution corresponds to the co-monotone coupling between $\lambda$ and $\mu$.

\bibliography{joint_biblio}{}

\begin{thebibliography}{10}

\bibitem{AcBePeScTe12}
B.~Acciaio, M.~Beiglb{\"o}ck, F.~Penkner, W.~Schachermayer, and J.~Temme.
\newblock A trajectorial interpretation of doob's martingale inequalities.
\newblock {\em The Annals of Applied Probability}, 23(4):1494--1505, 2013.

\bibitem{AdHe96}
D.~R. Adams and L.~I. Hedberg.
\newblock {\em Function spaces and potential theory}, volume 314 of {\em
  Grundlehren der Mathematischen Wissenschaften [Fundamental Principles of
  Mathematical Sciences]}.
\newblock Springer-Verlag, Berlin, 1996.

\bibitem{AmPr03}
L.~Ambrosio and A.~Pratelli.
\newblock Existence and stability results in the {$L\sp 1$} theory of optimal
  transportation.
\newblock In {\em Optimal transportation and applications (Martina Franca,
  2001)}, volume 1813 of {\em Lecture Notes in Math.}, pages 123--160.
  Springer, Berlin, 2003.

\bibitem{AzYo79}
J.~Az{\'e}ma and M.~Yor.
\newblock Une solution simple au probl{\`e}me de {S}korokhod.
\newblock In {\em S{\'e}minaire de {P}robabilit{\'e}s, {XIII} ({U}niv.
  {S}trasbourg, {S}trasbourg, 1977/78)}, volume 721 of {\em Lecture Notes in
  Math.}, pages 90--115. Springer, Berlin, 1979.

\bibitem{BaCh77}
J.~R. Baxter and R.~V. Chacon.
\newblock Compactness of stopping times.
\newblock {\em Z. Wahrscheinlichkeitstheorie und Verw. Gebiete},
  40(3):169--181, 1977.

\bibitem{BGMS08}
M.~Beiglb{\"o}ck, M.~Goldstern, G.~Maresch, and W.~Schachermayer.
\newblock Optimal and better transport plans.
\newblock {\em J. Funct. Anal.}, 256(6):1907--1927, 2009.

\bibitem{BeHePe12}
M.~Beiglb{\"o}ck, P.~{Henry-Labord{\`e}re}, and F.~Penkner.
\newblock Model-independent bounds for option prices: A mass transport
  approach.
\newblock {\em Finance and Stochastics}, 17(3):477--501, 2013.

\bibitem{BiCa10}
S.~Bianchini and L.~Caravenna.
\newblock On optimality of {$c$}-cyclically monotone transference plans.
\newblock {\em C. R. Math. Acad. Sci. Paris}, 348(11-12):613--618, 2010.

\bibitem{BoNu13}
B.~Bouchard and M.~Nutz.
\newblock Arbitrage and duality in nondominated discrete-time models.
\newblock {\em The Annals of Applied Probability}, 25(2):823--859, 2015.

\bibitem{Co08}
A.~M.~G. Cox.
\newblock Extending {C}hacon-{W}alsh: minimality and generalised starting
  distributions.
\newblock In {\em S\'eminaire de probabilit\'es {XLI}}, volume 1934 of {\em
  Lecture Notes in Math.}, pages 233--264. Springer, Berlin, 2008.

\bibitem{CoHo05b}
A.~M.~G. Cox and D.~Hobson.
\newblock Skorokhod embeddings, minimality and non-centred target
  distributions.
\newblock {\em Probability Theory and Related Fields}, 135:395--414, 2005.

\bibitem{CoOb11b}
A.~M.~G. Cox and J.~Ob{\l}{\'o}j.
\newblock Robust hedging of double touch barrier options.
\newblock {\em SIAM J. Financial Math.}, 2:141--182, 2011.

\bibitem{CoWa12}
A.~M.~G. {Cox} and J.~{Wang}.
\newblock {Root's Barrier: Construction, Optimality and Applications to
  Variance Options}.
\newblock {\em Ann. Appl. Probab.}, 23(3):859--894, 2013.

\bibitem{CoPe12}
A.M.G. Cox and G~Peskir.
\newblock Embedding laws in diffusions by functions of time.
\newblock {\em The Annals of Probability}, 43(5):2481--2510, 2015.

\bibitem{DeSc94}
F.~Delbaen and W.~Schachermayer.
\newblock A general version of the fundamental theorem of asset pricing.
\newblock {\em Math. Ann.}, 300(3):463--520, 1994.

\bibitem{DeMeA}
C.~Dellacherie and P.-A. Meyer.
\newblock {\em Probabilities and potential}, volume~29 of {\em North-Holland
  Mathematics Studies}.
\newblock North-Holland Publishing Co., Amsterdam, 1978.

\bibitem{DeMeB}
C.~Dellacherie and P.-A. Meyer.
\newblock {\em Probabilities and potential. {B}}, volume~72 of {\em
  North-Holland Mathematics Studies}.
\newblock North-Holland Publishing Co., Amsterdam, 1982.
\newblock Theory of martingales, Translated from the French by J. P. Wilson.

\bibitem{Doleans68}
C.~Dol{\'e}ans.
\newblock Existence du processus croissant naturel associ{\'e} {\`a} un
  potentiel de la classe (d).
\newblock {\em Probability Theory and Related Fields}, 9(4):309--314, 1968.

\bibitem{DoSo12}
Y.~Dolinsky and H.~M. Soner.
\newblock Martingale optimal transport and robust hedging in continuous time.
\newblock {\em Probab. Theory Relat. Fields}, 160(1-2):391--427, 2014.

\bibitem{DoSo14}
Y.~{Dolinsky} and H.~M. {Soner}.
\newblock {M}artingale optimal transport in the {S}korokhod space.
\newblock {\em Stochastic {P}rocesses and their {A}pplications},
  125(10):3893--3931, 2015.

\bibitem{DoSo13}
Y.~Dolinsky and M.~H. Soner.
\newblock Robust hedging with proportional transaction costs.
\newblock {\em Finance Stoch.}, 18(2):327--347, 2014.

\bibitem{Fa81}
N.~Falkner.
\newblock The distribution of {B}rownian motion in {${\bf R}^{n}$} at a natural
  stopping time.
\newblock {\em Adv. in Math.}, 40(2):97--127, 1981.

\bibitem{GaHeTo12}
A.~Galichon, P.~{Henry-Labord{\`e}re}, and N.~Touzi.
\newblock {A Stochastic Control Approach to No-Arbitrage Bounds Given
  Marginals, with an Application to Lookback Options}.
\newblock {\em Ann. Appl. Probab.}, 24(1):312--336, 2014.

\bibitem{GaMc96}
W.~Gangbo and R.~McCann.
\newblock The geometry of optimal transportation.
\newblock {\em Acta Math.}, 177(2):113--161, 1996.

\bibitem{GaMiOb14}
P.~Gassiat, A.~Mijatovi{\'c}, and H.~Oberhauser.
\newblock An integral equation for {R}oot's barrier and the generation of
  {B}rownian increments.
\newblock {\em Ann. Appl. Probab.}, 25(4):2039--2065, 2015.

\bibitem{ObRe13}
P.~{Gassiat}, H.~{Oberhauser}, and G.~dos Reis.
\newblock {R}oot's barrier, viscosity solutions of obstacle problems and
  reflected {FBSDE}s.
\newblock {\em Stochastic {P}rocesses and their {A}pplications},
  125(12):4601--4631, 2015.

\bibitem{HeObSpTo12}
P.~{Henry-Labord{\`e}re}, J.~Ob{\l}{\'o}j, P.~Spoida, and N.~Touzi.
\newblock The maximum maximum of a martingale with given {$n$} marginals.
\newblock {\em Ann. Appl. Probab.}, 26(1):1--44, 2016.

\bibitem{HiPr11}
F.~Hirsch, C.~Profeta, B.~Roynette, and M.~Yor.
\newblock {\em Peacocks and associated martingales, with explicit
  constructions}, volume~3 of {\em Bocconi \& Springer Series}.
\newblock Springer, Milan; Bocconi University Press, Milan, 2011.

\bibitem{Ho98a}
D.~Hobson.
\newblock Robust hedging of the lookback option.
\newblock {\em Finance and Stochastics}, 2:329--347, 1998.

\bibitem{Ho11}
D.~Hobson.
\newblock The {S}korokhod embedding problem and model-independent bounds for
  option prices.
\newblock In {\em Paris-{P}rinceton {L}ectures on {M}athematical {F}inance
  2010}, volume 2003 of {\em Lecture Notes in Math.}, pages 267--318. Springer,
  Berlin, 2011.

\bibitem{HoNe12}
D.~Hobson and A.~Neuberger.
\newblock Robust bounds for forward start options.
\newblock {\em Mathematical Finance}, 22(1):31--56, 2012.

\bibitem{Ja88}
S.~Jacka.
\newblock Doob's inequalities revisited: A maximal $h^1$-embedding.
\newblock {\em Stochastic processes and their applications}, 29(2):281--290,
  1988.

\bibitem{Ke95}
A.~S. Kechris.
\newblock {\em Classical descriptive set theory}, volume 156 of {\em Graduate
  Texts in Mathematics}.
\newblock Springer-Verlag, New York, 1995.

\bibitem{Ki72}
J.~Kiefer.
\newblock Skorohod embedding of multivariate {RV}'s, and the sample {DF}.
\newblock {\em Z. Wahrscheinlichkeitstheorie und Verw. Gebiete}, 24(1):1--35,
  1972.

\bibitem{KnSm84}
M.~Knott and C.~S. Smith.
\newblock On the optimal mapping of distributions.
\newblock {\em J. Optim. Theory Appl.}, 43(1):39--49, 1984.

\bibitem{LaMo14}
G.~Last, P.~M{\"o}rters, and H.~Thorisson.
\newblock Unbiased shifts of {B}rownian motion.
\newblock {\em Ann. Probab.}, 42(2):431--463, 2014.

\bibitem{Lo70}
R.~M. Loynes.
\newblock Stopping times on {B}rownian motion: {S}ome properties of {R}oot's
  construction.
\newblock {\em Z. Wahrscheinlichkeitstheorie und Verw. Gebiete}, 16:211--218,
  1970.

\bibitem{meyer_convergence_1978}
P.-A. Meyer.
\newblock Convergence faible et compacit\'e des temps d'arr\^et, d'apr\`es
  baxter et chacon.
\newblock In {\em S\'eminaire de probabilit\'es}, volume {XII} of {\em Lecture
  Notes in Mathematics, 649}, pages 411--423. Springer Berlin Heidelberg, 1978.

\bibitem{Mo72}
I.~Monroe.
\newblock On embedding right continuous martingales in {B}rownian motion.
\newblock {\em Ann. Math. Statist.}, 43:1293--1311, 1972.

\bibitem{najnudel_2011}
J.~Najnudel and A.~Nikeghbali.
\newblock A new kind of augmentation of filtrations.
\newblock {\em ESAIM: Probability and Statistics}, 15:S39--S57, January 2011.

\bibitem{Ob04}
J.~Ob{\l}{\'o}j.
\newblock The {S}korokhod embedding problem and its offspring.
\newblock {\em Probab. Surv.}, 1:321--390, 2004.

\bibitem{ObSp14}
J.~Ob{\l}{\'o}j, P.~Spoida, and N.~Touzi.
\newblock Martingale inequalities for the maximum via pathwise arguments.
\newblock In {\em In {M}emoriam {M}arc {Y}or-S{\'e}minaire de
  {P}robabilit{\'e}s XLVII}, pages 227--247. Springer, 2015.

\bibitem{Pe86}
E.~Perkins.
\newblock The {C}ereteli-{D}avis solution to the {$H^1$}-embedding problem and
  an optimal embedding in {B}rownian motion.
\newblock In {\em Seminar on stochastic processes, 1985}, pages 172--223.
  Springer, 1986.

\bibitem{ReYo99}
D.~Revuz and M.~Yor.
\newblock {\em Continuous martingales and {B}rownian motion}, volume 293 of
  {\em Grundlehren der Mathematischen Wissenschaften [Fundamental Principles of
  Mathematical Sciences]}.
\newblock Springer-Verlag, Berlin, third edition, 1999.

\bibitem{RoWi00}
L.~C.~G. Rogers and D.~Williams.
\newblock {\em Diffusions, Markov processes and martingales: Volume 2, It{\^o}
  calculus}, volume~2.
\newblock Cambridge university press, 2000.

\bibitem{Ro69}
D.~H. Root.
\newblock The existence of certain stopping times on {B}rownian motion.
\newblock {\em Ann. Math. Statist.}, 40:715--718, 1969.

\bibitem{Ro71}
H.~Rost.
\newblock The stopping distributions of a {M}arkov {P}rocess.
\newblock {\em Invent. Math.}, 14:1--16, 1971.

\bibitem{Ro76}
H.~Rost.
\newblock Skorokhod stopping times of minimal variance.
\newblock In {\em S\'eminaire de {P}robabilit\'es, {X} ({P}remi\`ere partie,
  {U}niv. {S}trasbourg, {S}trasbourg, ann\'ee universitaire 1974/1975)}, pages
  194--208. Lecture Notes in Math., Vol. 511. Springer, Berlin, 1976.

\bibitem{Ru91}
L.~R{\"u}schendorf.
\newblock Fr\'echet-bounds and their applications.
\newblock In {\em Advances in probability distributions with given marginals
  ({R}ome, 1990)}, volume~67 of {\em Math. Appl.}, pages 151--187. Kluwer Acad.
  Publ., Dordrecht, 1991.

\bibitem{Ru95}
L.~R{\"u}schendorf.
\newblock Optimal solutions of multivariate coupling problems.
\newblock {\em Appl. Math. (Warsaw)}, 23(3):325--338, 1995.

\bibitem{Sk61}
A.~V. Skorohod.
\newblock {\em Issledovaniya po teorii sluchainykh protsessov
  ({S}tokhasticheskie differentsialnye uravneniya i predelnye teoremy dlya
  protsessov {M}arkova)}.
\newblock Izdat. Kiev. Univ., Kiev, 1961.

\bibitem{Sk65}
A.~V. Skorokhod.
\newblock {\em Studies in the theory of random processes}.
\newblock Translated from the Russian by Scripta Technica, Inc. Addison-Wesley
  Publishing Co., Inc., Reading, Mass., 1965.

\bibitem{St85}
H.~Strasser.
\newblock {\em Mathematical theory of statistics}, volume~7 of {\em de Gruyter
  Studies in Mathematics}.
\newblock Walter de Gruyter \& Co., Berlin, 1985.
\newblock Statistical experiments and asymptotic decision theory.

\bibitem{Va83}
P.~Vallois.
\newblock Le probleme de {S}korokhod sur $\mathbb{R}$: une approche avec le
  temps local.
\newblock In {\em S{\'e}minaire de Probabilit{\'e}s XVII 1981/82}, pages
  227--239. Springer, 1983.

\bibitem{Vi03}
C.~Villani.
\newblock {\em Topics in optimal transportation}, volume~58 of {\em Graduate
  Studies in Mathematics}.
\newblock American Mathematical Society, Providence, RI, 2003.

\bibitem{Vi09}
C.~Villani.
\newblock {\em Optimal Transport. Old and New}, volume 338 of {\em Grundlehren
  der mathematischen Wissenschaften}.
\newblock Springer, 2009.

\end{thebibliography}
\bibliographystyle{plain}
\end{document}